\numberwithin{equation}{section} 
\newtheorem{theorem}{Theorem}[section]
\newtheorem{proposition}[theorem]{Proposition}
\newtheorem{corollary}[theorem]{Corollary}
\newtheorem{lemma}[theorem]{Lemma}
\theoremstyle{definition}
\newtheorem{remark}[theorem]{Remark}
\newtheorem{definition}[theorem]{Definition}
\newtheorem{problem}[theorem]{Problem}
\newcommand{\Dj}{\hbox to 8pt{\raisebox{.4\height}{-}\hss D}}
\newcommand\beq{\begin{equation}}
\newcommand\eeq{\end{equation}}
\newcommand\bqa {\begin{eqnarray}}
\newcommand\eqa {\end{eqnarray}}
\newcommand{\bear}{\begin{array}}
\newcommand{\enar}{\end{array}}
\begin{document}

\title{Noncommutative cross-ratio and Schwarz derivative}

\author{Vladimir Retakh, Vladimir Rubtsov, Georgy Sharygin}
\date{}

\maketitle

%\dedicatory{
\begin{center}
{\footnotesize\textit{Dedicated to Emma Previato.}}
\end{center}
%}

\begin{abstract}
We present here a theory of noncommutative cross-ratio, Schwarz derivative and their connections and relations to the operator cross-ratio. We apply the theory to ``noncommutative elementary geometry" and relate it to noncommutative integrable systems. We also provide a noncommutative version of the celebrated ``pentagramma mirificum''.

%Using the formalism of Monge-Amp\`ere operators, Roubtsov and
%Roulstone have shown  in \cite{RR} that  a complex
%geometry on phase space arises naturally in some two-dimensional Hamiltonian models of nearly geostrophic flows in
%hydrodynamics. The aim of this note is to show how a similar approach describes the geometry associated with a variety of semi-geostrophic and quasi-geostrophic models in two %and three dimensions.
\end{abstract}

%\begin{document}

%We list basic properties of noncommutative cross-ratios introduced in \cite{R}.

\section{Introduction}

Cross-ratio and Schwarz derivative are some of the most famous
invariants in mathematics (see \cite{Lab}, \cite{OT}, \cite{OT2}).
Different versions of their noncommutative analogs and various
applications of these constructions to integrable systems, control theory and other subjects
were discussed in several publications including \cite{DGP2}. In this
paper, which is the first one of a series of works, we recall some of these definitions, revisit the previous
results and discuss their connections with each other and with
noncommutative elementary geometry. In the forthcoming papers we shall further discuss the role of noncommutative cross ratio in the theory of noncommutative integrable models and in topology.

The present paper is organized as follows. In Sections 2, 3 we recall a
definition of noncommutative cross-ratios based on the theory of
noncommutative quasi-Pl\"ucker invariants (see \cite{GR2, GGRW}), in Section 4
we use the theory of quasideterminants (see  \cite{GR1}) to obtain
noncommutative versions  of Menelaus's and Ceva's theorems. In Section
5 we compare our definition of cross-ratio with the operator
version used in control theory \cite{Zelikin06} and show how Schwarz derivatives appear as the infinitesimal analogs of noncommutative cross-ratios. In section 6 we revisit an approach to noncommutative Schwarz derivative from \cite{RS} and section 7 deals with possible applications of the theory we develop. It should also be mentioned that in present paper we develop the constructions and ideas, first outlined in \cite{RS}.

It is our pleasure to dedicate this paper to Emma Previato, whose intelligence, erudition, interest to various domaines of our science are spectacular and her friendship is constant and
loyal. Her results (\cite{DGP2}) were one of important motives which  inspired us to think once more about the role of non-commutative cross-ratio.

\vskip 2mm \noindent{\bf Acknowledgements.} The authors are grateful to B. Khesin, V. Ovsienko and S. Tabachnikov  for helpful discussions.
This research was started during V. Retakh's visit to LAREMA and Department of Mathematics, University of Angers. He is thankful to the project DEFIMATH for its support and LAREMA
for hospitality. V.~Roubtsov thanks the project IPaDEGAN (H2020-MSCA-RISE-2017), Grant Number 778010 for support of his visits to CRM, University of Montreal where the paper was finished and the CRM group of Mathematical Physics  for hospitality. He is partly supported  by the Russian Foundation for Basic Research under the Grants RFBR 18-01-00461.
G.~Sharygin is thankful to IHES and LAREMA for hospitality during his visits. His research is partly supported  by the Russian Science Foundation, Grant No. 16-11-10069.

\section{Quasi-Pl\"ucker coordinates}

We begin with a list of basic properties of noncommutative cross-ratios introduced in \cite{R}. To this end we first recall the definition and properties of quasi-Pl\"ucker coordinates; observe that we shall only deal with the quasi-Pl\"ucker coordinates for $2\times n$-matrices over a noncommutative division ring $\mathcal R$. The corresponding theory for general $k\times n$-matrices is presented in \cite {GR2, GGRW}. 

Recall (see \cite {GR, GR1} and subsequent papers) that for a matrix $\begin{pmatrix} a_{1k}&a_{1i}\\ a_{2k}&a_{2i}\end{pmatrix}$ one can define four quasideterminants provided the corresponding elements are invertible:
$$
\begin{vmatrix} \boxed{a_{1k}}&a_{1i}\\ a_{2k}&a_{2i}\end{vmatrix}=a_{1k}-a_{1i}a_{2i}^{-1}a_{2k}, \ \
\begin{vmatrix} a_{1k}&\boxed{a_{1i}}\\ a_{2k}&a_{2i}\end{vmatrix}=a_{1i}-a_{1k}a_{2k}^{-1}a_{2i},
$$
$$
\begin{vmatrix} a_{1k}&a_{1i}\\ \boxed{a_{2k}}&a_{2i}\end{vmatrix}=a_{2k}-a_{2i}a_{1i}^{-1}a_{1k},\ \ 
\begin{vmatrix} a_{1k}&a_{1i}\\ a_{2k}&\boxed{a_{2i}}\end{vmatrix}=a_{2i}-a_{2k}a_{1k}^{-1}a_{1i}.
$$

Let $A=\begin{pmatrix} a_{11}&a_{12}&\dots&a_{1n}\\a_{21}&a_{22}&\dots&a_{2n}\end{pmatrix}$ be a matrix over $\mathcal R$.
\begin{lemma} Let $i\neq k$. Then 
$$
\begin{vmatrix} a_{1k}&\boxed{a_{1i}}\\ a_{2k}&a_{2i}\end{vmatrix}^{-1}   
\begin{vmatrix} a_{1k}&\boxed{a_{1j}}\\ a_{2k}&a_{2j}\end{vmatrix}=
\begin{vmatrix} a_{1k}&a_{1i}\\ a_{2k}&\boxed{a_{2i}}\end{vmatrix}^{-1}
\begin{vmatrix} a_{1k}&a_{1j}\\ a_{2k}&\boxed{a_{2j}}\end{vmatrix}^{-1}
$$
if the corresponding expressions are defined.     
\end{lemma}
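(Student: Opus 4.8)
The plan is to reduce everything to the four defining formulas for the $2\times 2$ quasideterminants and to exhibit a single common left factor that cancels when the ratios are formed. Write $p_i=\begin{vmatrix} a_{1k}&\boxed{a_{1i}}\\ a_{2k}&a_{2i}\end{vmatrix}=a_{1i}-a_{1k}a_{2k}^{-1}a_{2i}$ and $\tilde p_i=\begin{vmatrix} a_{1k}&a_{1i}\\ a_{2k}&\boxed{a_{2i}}\end{vmatrix}=a_{2i}-a_{2k}a_{1k}^{-1}a_{1i}$, and define $p_j,\tilde p_j$ in the same way with $i$ replaced by $j$. The observation that drives the proof is the factorization
\[ p_i=-\,a_{1k}a_{2k}^{-1}\,\tilde p_i, \qquad p_j=-\,a_{1k}a_{2k}^{-1}\,\tilde p_j . \]

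To verify the first of these I would simply expand the right-hand side: $-a_{1k}a_{2k}^{-1}\bigl(a_{2i}-a_{2k}a_{1k}^{-1}a_{1i}\bigr)=-a_{1k}a_{2k}^{-1}a_{2i}+a_{1k}\bigl(a_{2k}^{-1}a_{2k}\bigr)a_{1k}^{-1}a_{1i}=a_{1i}-a_{1k}a_{2k}^{-1}a_{2i}=p_i$, where the cancellations use only that $a_{1k}$ and $a_{2k}$ are invertible; this invertibility, together with $i\neq k$ (which is precisely what prevents $p_i$ and $\tilde p_i$ from vanishing), is part of the hypothesis that ``the corresponding expressions are defined''. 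Now set $c=-a_{1k}a_{2k}^{-1}$; it is invertible, with $c^{-1}=-a_{2k}a_{1k}^{-1}$. Then, using the rule $(ab)^{-1}=b^{-1}a^{-1}$ for inversion in the division ring $\mathcal R$,
\[ p_i^{-1}\,p_j=(c\,\tilde p_i)^{-1}(c\,\tilde p_j)=\tilde p_i^{-1}\,c^{-1}c\,\tilde p_j=\tilde p_i^{-1}\,\tilde p_j , \]
which is the right-hand side of the asserted identity (the second quasideterminant appearing without an inverse). When $j=k$ both $p_j$ and $\tilde p_j$ vanish and the statement degenerates to $0=0$.

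I do not see any substantial obstacle here; the content is a two-line manipulation once the factorization is spotted. The only things demanding care are purely bookkeeping: in the noncommutative setting one must respect the order of the factors — in particular the left factor $c$ extracted from $p_i$ gets inverted, so it really does annihilate the left factor $c$ extracted from $p_j$ — and one must keep track of exactly which inverses are being invoked ($a_{1k}$, $a_{2k}$, $p_i$, $\tilde p_i$, and $p_j$, $\tilde p_j$ when $j\neq k$), each of which is covered by the standing proviso. Conceptually, the factorization $p_i=-a_{1k}a_{2k}^{-1}\tilde p_i$ is an instance of the general behaviour of quasideterminants under row operations — two quasideterminants boxed in the same column of a matrix differ by left multiplication by a ratio of entries of the other column — so the lemma is really the statement that the two natural presentations of the left quasi-Pl\"ucker coordinate $q^{ij}_k$ agree.
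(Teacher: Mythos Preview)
Your proof is correct: the factorization $p_i=-a_{1k}a_{2k}^{-1}\tilde p_i$ is exactly the right observation, and the cancellation of the common left factor is valid in a division ring. You also correctly read the intended statement as $p_i^{-1}p_j=\tilde p_i^{-1}\tilde p_j$, the last ${}^{-1}$ on the right-hand side in the displayed formula being a typo (as the subsequent definition of $q_{ij}^k$ confirms).

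The paper does not actually supply a proof of this lemma; it is quoted from the general theory of quasi-Pl\"ucker coordinates developed in the cited references \cite{GR2,GGRW}, so there is nothing to compare your argument against beyond noting that your factorization is the standard one --- it is, as you say, an instance of how quasideterminants boxed in the same column transform under row operations. One small quibble: the parenthetical ``$i\neq k$ (which is precisely what prevents $p_i$ and $\tilde p_i$ from vanishing)'' overstates things --- $i\neq k$ is necessary but not sufficient for $p_i$ to be invertible; the real guarantee is the standing hypothesis that the expressions are defined, which you do invoke. Also, the paper's notation is $q_{ij}^k$, not $q^{ij}_k$.
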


Note that in the formula the boxed elements on the left and on the right must be in the same row. 

\begin{definition} We call the expression
$$
q_{ij}^k(A)=\begin{vmatrix} a_{1k}&\boxed{a_{1i}}\\ a_{2k}&a_{2i}\end{vmatrix}^{-1}   
\begin{vmatrix} a_{1k}&\boxed{a_{1j}}\\ a_{2k}&a_{2j}\end{vmatrix}=
\begin{vmatrix} a_{1k}&a_{1i}\\ a_{2k}&\boxed{a_{2i}}\end{vmatrix}^{-1}
\begin{vmatrix} a_{1k}&a_{1j}\\ a_{2k}&\boxed{a_{2j}}\end{vmatrix}^{-1}
$$
the quasi-Pl\"ucker coordinates of matrix $A$. 
\end{definition}

Our terminology is justified by the following observation. Recall that
in the commutative case the expressions 
$$
p_{ik}(A)=\begin{vmatrix}a_{1i}&a_{1k}\\ a_{2i}&a_{2k}\end{vmatrix}=
a_{1i}a_{2k}-a_{1k}a_{2i}
$$
are the Pl\"ucker coordinates of $A$. One can see that in the commutative case
$$
q_{ij}^k(A)=\frac{p_{jk}(A)}{p_{ik}(A)},
$$ 
i.e. quasi-Pl\"ucker coordinates are ratios of Pl\"ucker coordinates.

Let us list here the properties of quasi-Pl\"ucker coordinates over (noncommutative)
division ring $\mathcal R$. For the sake of brevity we shall sometimes write $q_{ij}^k$ instead of $q_{ij}^k(A)$ where it cannot lead to a confusion.

\begin{enumerate}
%\smallskip \noindent 1)
\item Let $g$ be an invertible matrix over $\mathcal R$. Then
$$ q_{ij}^k(g\cdot A)=q_{ij}^k(A).$$

%\smallskip \noindent 2) 
\item Let $\Lambda = \text {diag}\ (\lambda_1, \lambda _2,\dots, \lambda_n)$ be an
invertible diagonal matrix over $\mathcal R$. Then
$$ q_{ij}^k(A\cdot \Lambda)=\lambda_i^{-1}\cdot q_{ij}^k(A)\cdot \lambda_j.$$

%\smallskip \noindent 3) 
\item If $j=k$ then $q_{ij}^k=0$; if $j=i$ then $q_{ij}^k=1$ (we always assume
$i\neq k$).

%\smallskip \noindent 4)
\item $q_{ij}^k\cdot q_{j\ell}^k=q_{i\ell}^k\ $. In particular, $q_{ij}^kq_{ji}^k=1$.

%\smallskip \noindent 5) 
\item ``Noncommutative skew-symmetry": For distinct $i,j,k$
$$
q_{ij}^k\cdot q_{jk}^i\cdot q_{ki}^j=-1.
$$
One can also rewrite this formula as $q_{ij}^kq_{jk}^i=-q_{ik}^j$.

%\smallskip \noindent 6) 
\item ``Noncommutative Pl\"ucker identity": For distinct $i,j,k,\ell$
$$
q_{ij}^k q_{ji}^{\ell} + q_{i\ell}^k q_{\ell i}^j=1.
$$
\end{enumerate}

\medskip\noindent
One can easily check two last formulas in the commutative case. In fact,
$$
q_{ij}^k\cdot q_{jk}^i\cdot q_{ki}^j=
\frac{p_{jk}p_{ki}p_{ij}}{p_{ik}p_{ji}p{kj}}=-1
$$
because Pl\"ucker coordinates are skew-symmetric: $p_{ij}=-p_{ji}$ for
any $i,j$.

Also, assuming that $i<j<k<\ell$
$$
q_{ij}^k q_{ji}^{\ell} + q_{i\ell}^k q_{\ell i}^j=
\frac{p_{jk}p_{i\ell}}{p_{ik}p_{j\ell}}+
\frac{p_{\ell k}p_{ij}}{p_{ik}p_{\ell j}}.
$$
Since $\frac{p_{\ell k}}{p_{\ell j}}=\frac {p_{k\ell}}{p_{j\ell}}$,
the last expression is equal to
$$
\frac{p_{jk}p_{i\ell}}{p_{ik}p_{j\ell}}+
\frac{p_{k\ell }p_{ij}}{p_{ik}p{j\ell }}=
\frac {p_{ij}p_{k\ell}+p_{i\ell}p_{jk}}{p_{ik}p_{j\ell}}=1
$$
due to the celebrated Pl\"ucker identity
$$
p_{ij}p_{k\ell} - p_{ik}p_{j\ell} +p_{i\ell}p_{jk}=0.
$$

\begin{remark} We present here the theory of the {\it left}
quasi-Pl\"ucker coordinates for $2$ by $n$ matrices where $n>2$.
The theory of the {\it right} quasi-Pl\"ucker coordinates for $n$ by $2$
or, more generally, for $n$ by $k$ matrices where $n>k$ can be found
in \cite{GR2, GGRW}.
\end{remark}

\section{Definition and basic properties of cross-ratios}

\subsection{Non-commutative cross-ratio: basic definition.} 

We define cross-ratios over (noncommutative) division ring $\mathcal R$
by imitating the definition of classical cross-ratios in homogeneous coordinates.
Namely, if four points in (real or complex) projective plane can be represented in homogeneous coordinates by vectors $a,b,c,d$ such that $c=a+b$ and $d=ka+b$, then their cross-ratio is $k$.

So we let
$$
x=\begin{pmatrix} x_1\\ x_2\\ \end{pmatrix},\ \
y=\begin{pmatrix} y_1\\ y_2\\ \end{pmatrix},\ \
z=\begin{pmatrix} z_1\\ z_2\\ \end{pmatrix},\ \
t=\begin{pmatrix} t_1\\ t_2 \end{pmatrix}
$$
be four vectors in $\mathcal R^2$. We define their cross-ratio $\kappa=\kappa (x,y,z,t)$
by equations
$$
\begin{cases}
t=x\alpha +y\beta\\
z=x\alpha \gamma +y\beta \gamma \cdot \kappa
\end{cases}
$$
where $\alpha, \beta, \gamma, \kappa \in \mathcal R$.

In order to obtain explicit formulas, let us consider the matrix
$$
\begin{pmatrix} 
x_1&y_1&z_1&t_1\\
x_2&y_2&z_2&t_2
\end{pmatrix}.
$$
We shall identify its columns with $x,y,z,t$. Then we have the following theorem (see \cite{R})
\begin{theorem}
$$
\kappa (x,y,z,t)=q_{zt}^y\cdot q_{tz}^x\ .
$$
\end{theorem}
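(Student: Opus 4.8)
The plan is to translate the defining system for $\kappa$ into the language of quasi-Pl\"ucker coordinates by solving for the unknowns $\alpha,\beta,\gamma,\kappa$ directly. Writing out the two vector equations componentwise gives four scalar equations over $\mathcal R$:
\[
t_1 = x_1\alpha + y_1\beta,\quad t_2 = x_2\alpha + y_2\beta,\quad
z_1 = x_1\alpha\gamma + y_1\beta\gamma\kappa,\quad z_2 = x_2\alpha\gamma + y_2\beta\gamma\kappa.
\]
First I would use the first pair to solve for $\alpha$ and $\beta$ in terms of $x,y,t$. Eliminating $\alpha$ by multiplying suitably and subtracting, one gets $\beta$ expressed through a quasideterminant: from $t_1 = x_1\alpha + y_1\beta$ and $t_2 = x_2\alpha + y_2\beta$ we obtain (after left-multiplying the equations by $x_{1}^{-1}$, resp.\ $x_{2}^{-1}$, and subtracting) an expression of the form $\beta = \begin{vmatrix} x_1 & \boxed{y_1}\\ x_2 & y_2\end{vmatrix}^{-1}\begin{vmatrix} x_1 & \boxed{t_1}\\ x_2 & t_2\end{vmatrix}$, that is $\beta = q^{x}_{yt}$ in the notation of the Definition applied to the relevant $2\times 3$ submatrix; similarly $\alpha = q^{y}_{xt}$ up to the analogous quasideterminant identification. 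Here I would invoke the freedom to rescale: the system only determines $x\alpha$ and $y\beta$ as vectors, not $\alpha,\beta$ individually once $x,y$ are fixed representatives, so it is cleanest to absorb constants and work with the genuinely invariant quantities.

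Next I would treat the second pair of equations the same way, but now with the "basis vectors" being $x\alpha$ and $y\beta$ rather than $x$ and $y$: the equation $z = (x\alpha)\gamma + (y\beta)(\gamma\kappa)$ has exactly the shape of the first equation with $(x\alpha, y\beta, z)$ in place of $(x,y,t)$ and with coefficients $\gamma$ and $\gamma\kappa$. Applying the same elimination yields $\gamma\kappa = q^{x\alpha}_{(y\beta)\, z}$ and $\gamma = q^{y\beta}_{(x\alpha)\, z}$, hence
\[
\kappa = \gamma^{-1}(\gamma\kappa) = \bigl(q^{y\beta}_{(x\alpha)\,z}\bigr)^{-1}\, q^{x\alpha}_{(y\beta)\,z}.
\]
Then I would invoke property (2) of quasi-Pl\"ucker coordinates (behaviour under right multiplication by a diagonal matrix) to strip off the scalars $\alpha,\beta$: passing from columns $(x\alpha, y\beta, z)$ back to $(x,y,z)$ rescales $q^{\bullet}_{\bullet\bullet}$ by the appropriate $\lambda_i^{-1}\cdots\lambda_j$ factors, and these are arranged precisely so that after the substitution the $\alpha$'s and $\beta$'s cancel, leaving quasi-Pl\"ucker coordinates of the original matrix with columns $x,y,z,t$. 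Finally I would use property (1) (left $\GL$-invariance) to justify that only the quasi-Pl\"ucker coordinates enter, and then identify the surviving product, after reindexing via properties (3)--(5) (in particular $q^k_{ij}q^k_{ji}=1$ and the skew-symmetry relation), with $q_{zt}^{y}\cdot q_{tz}^{x}$.

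The main obstacle I expect is bookkeeping the noncommutativity in the elimination and rescaling steps: because scalars sit on the right of the column vectors, one must be careful that "$q^{x\alpha}_{(y\beta)z}$" really equals $\alpha^{-1} q^{x}_{yz}\,\beta$ (or the correct variant) rather than some other ordering, and that the two quasideterminant expressions for each $q^k_{ij}$ in the Definition are used consistently (the Lemma guarantees they agree, but one should pick the representation that makes the cancellation manifest). A secondary subtlety is the degenerate-case hypothesis "if the corresponding expressions are defined": I would note that the derivation presumes invertibility of all the quasideterminants that appear, which is exactly the standing assumption under which $\kappa$ is well defined, so no separate argument is needed. Once the orderings are pinned down, the identity $\kappa = q_{zt}^{y} q_{tz}^{x}$ drops out by direct substitution.
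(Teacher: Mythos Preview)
The paper does not actually prove this theorem in-text; it simply cites \cite{R} and then records the expanded quasideterminant formula. So there is no in-paper argument to compare against, and your direct derivation is in fact the natural proof.

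Your approach is correct. Solving $t=x\alpha+y\beta$ componentwise gives $\alpha=q^{y}_{xt}$ and $\beta=q^{x}_{yt}$; applying the same elimination to $z=(x\alpha)\gamma+(y\beta)(\gamma\kappa)$ gives $\gamma=q^{y\beta}_{(x\alpha)\,z}$ and $\gamma\kappa=q^{x\alpha}_{(y\beta)\,z}$; property~(2) then yields $\gamma=\alpha^{-1}q^{y}_{xz}$ and $\gamma\kappa=\beta^{-1}q^{x}_{yz}$, whence
\[
\kappa=(q^{y}_{xz})^{-1}\,\alpha\,\beta^{-1}\,q^{x}_{yz}=(q^{y}_{xz})^{-1}q^{y}_{xt}\,(q^{x}_{yt})^{-1}q^{x}_{yz}=q^{y}_{zt}\,q^{x}_{tz},
\]
using only property~(4). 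Two minor clean-ups: your remark that ``the system only determines $x\alpha$ and $y\beta$ as vectors, not $\alpha,\beta$ individually'' is not quite right---once $x,y$ are fixed vectors (not projective classes), $\alpha$ and $\beta$ \emph{are} uniquely determined, and you use their explicit values $q^{y}_{xt},\,q^{x}_{yt}$ anyway; and you do not need properties~(1) or~(5), only property~(2) for the rescaling and property~(4) for the final reduction. With those small edits the argument is complete.
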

Note that in the generic case 
$$
\begin{aligned}
\kappa &(x,y,z,t)= \begin{vmatrix} y_1&\boxed{z_1}\\
y_2&z_2\end{vmatrix}^{-1}
\begin{vmatrix} y_1&\boxed{t_1}\\
y_2&t_2\end{vmatrix}\cdot
\begin{vmatrix} x_1&\boxed{t_1}\\
x_2&t_2\end{vmatrix}^{-1}
\begin{vmatrix} x_1&\boxed{z_1}\\
x_2&z_2\end{vmatrix}\\
%$$
%$$
&=z_2^{-1}(z_1z_2^{-1}-y_1y_2^{-1})^{-1}(t_1t_2^{-1}-y_1y_2^{-1})(t_1t_2^{-1}-x_1x_2^{-1})^{-1}(z_1z_2^{-1}-x_1x_2^{-1})z_2
\end{aligned}
$$
which shows that $\kappa(x,y,z,t)$ coincides with the standard cross-ratio in commutative case and also demonstrates the importance of conjugation in the noncommutative world. 

\begin{corollary} Let $x,y,z,t$ be vectors in $\mathcal R$,
$g$ be a $2$ by $2$ matrix over $\mathcal R$ and
$\lambda_i\in \mathcal R$, $i=1,2,3,4$. If the matrix $g$
and elements $\lambda_i$ are invertible then
\begin{equation}
\kappa (gx\lambda_1, gy\lambda_2, gz\lambda_3, gt\lambda_4)=
\lambda_3^{-1}\kappa (x,y,z,t)\lambda_3\ .
\end{equation}
\end{corollary}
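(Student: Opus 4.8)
The plan is to reduce the statement to the two equivariance properties of quasi-Pl\"ucker coordinates from Section~2 together with the identity $\kappa(x,y,z,t)=q_{zt}^y\cdot q_{tz}^x$ of the preceding Theorem. First I would observe that, writing $A$ for the $2\times 4$ matrix whose columns are $x,y,z,t$ and labelling these columns $1,2,3,4$, the $2\times 4$ matrix with columns $gx\lambda_1,\,gy\lambda_2,\,gz\lambda_3,\,gt\lambda_4$ is precisely $g\cdot A\cdot\Lambda$ with $\Lambda=\text{diag}(\lambda_1,\lambda_2,\lambda_3,\lambda_4)$. Since $g$ and all the $\lambda_i$ are invertible, the quasi-Pl\"ucker coordinates entering the computation below are defined whenever those of $A$ are.

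Next I would apply the Theorem to the transformed configuration and then peel off the two factors $g$ and $\Lambda$ one at a time. By the Theorem,
$$
\kappa(gx\lambda_1,gy\lambda_2,gz\lambda_3,gt\lambda_4)=q_{34}^2(gA\Lambda)\cdot q_{43}^1(gA\Lambda).
$$
Property~(1), $q_{ij}^k(g\cdot B)=q_{ij}^k(B)$, removes the left factor $g$, so this equals $q_{34}^2(A\Lambda)\cdot q_{43}^1(A\Lambda)$. Property~(2), $q_{ij}^k(B\cdot\Lambda)=\lambda_i^{-1}q_{ij}^k(B)\lambda_j$, then gives $q_{34}^2(A\Lambda)=\lambda_3^{-1}q_{34}^2(A)\lambda_4$ and $q_{43}^1(A\Lambda)=\lambda_4^{-1}q_{43}^1(A)\lambda_3$. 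Multiplying, the inner pair $\lambda_4\lambda_4^{-1}$ cancels and one is left with $\lambda_3^{-1}\bigl(q_{34}^2(A)q_{43}^1(A)\bigr)\lambda_3=\lambda_3^{-1}\kappa(x,y,z,t)\lambda_3$, which is the claim.

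I expect no real obstacle; the only thing to watch is the index bookkeeping. In property~(2) it is the \emph{lower} indices of $q_{ij}^k$ that produce the conjugating scalars, so the factors $\lambda_1$ and $\lambda_2$ attached to $x$ and $y$ — which occupy only the \emph{upper} slots of $q_{34}^2$ and $q_{43}^1$ — disappear entirely, which is why $\lambda_1,\lambda_2,\lambda_4$ are absent from the final answer. One should also check that the order of the two quasi-Pl\"ucker factors coming from the Theorem is arranged so that it is $\lambda_4$ that cancels while $\lambda_3$ remains as the conjugator; this is exactly what the matching lower-index patterns $34$ and $43$ guarantee.
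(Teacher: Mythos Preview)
Your proof is correct and follows exactly the route the paper intends: the corollary is stated without proof immediately after Theorem~3.1, and the implicit argument is precisely to combine $\kappa(x,y,z,t)=q_{zt}^y\,q_{tz}^x$ with properties~(1) and~(2) of the quasi-Pl\"ucker coordinates, which is what you carry out. Your index bookkeeping and the cancellation of $\lambda_4$ are accurate.
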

Again, as expected,  in the commutative case the right hand side of (3.1) equals
$\kappa (x,y,z,t)$.

%A proof immediately follows from the properties of quasi-Pl\"ucker coordinates. 

\begin {remark}
Note that the group $GL_2(\mathcal R)$ acts on vectors in $\mathcal R^2$
by multiplication from the left: $(g,x)\mapsto gx$,  and the group $\mathcal R^{\times}$ of
invertible elements in $\mathcal R$ acts by multiplication from the right:
$(\lambda, x)\mapsto x\lambda^{-1}$. These actions determine the action of
$GL_2(\mathcal R)\times T_4(\mathcal R)$ on 
$P_4=\mathcal R^2\times \mathcal R^2\times \mathcal R^2\times \mathcal R^2$ where
$T_4(\mathcal R)=(\mathcal R^{\times})^4$.
The cross-ratios are {\it relative invariants} of the action.
\end{remark}

The following theorem generalizes the main property of cross-ratios to the noncommutive case (see \cite{R}).
\begin{theorem} Let $\kappa (x,y,z,t)$ be defined and $\kappa (x,y,z,t)\neq 0,1$.
Then $4$-tuples $(x,y,z,t)$ and $(x',y',z',t')$ from
$P_4$ belong to the same orbit of $GL_2(\mathcal R)\times T_4(\mathcal R)$ if and only if
there exists $\mu \in \mathcal R^{\times}$ such that
\begin{equation}
\kappa (x,y,z,t)=\mu \cdot \kappa (x',y',z',t')\cdot \mu ^{-1}\ .
\end{equation}
\end{theorem}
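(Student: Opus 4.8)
\medskip
\noindent\textbf{Proof proposal.}
The plan is to prove the two implications separately. The implication ``same orbit $\Rightarrow$ conjugate cross-ratios'' is immediate: left multiplication by a matrix $g\in GL_2(\mathcal R)$ and right multiplication by scalars $\lambda_i\in\mathcal R^{\times}$ commute, so any element of $GL_2(\mathcal R)\times T_4(\mathcal R)$ sends $(x,y,z,t)$ to a tuple $(gx\lambda_1,gy\lambda_2,gz\lambda_3,gt\lambda_4)$, and by the Corollary above its cross-ratio equals $\lambda_3^{-1}\kappa(x,y,z,t)\lambda_3$. Hence if $(x,y,z,t)$ and $(x',y',z',t')$ lie in one orbit then $\kappa(x,y,z,t)=\lambda_3\,\kappa(x',y',z',t')\,\lambda_3^{-1}$ for the appropriate $\lambda_3$, which is the required identity with $\mu=\lambda_3$.

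For the converse I would bring an arbitrary admissible tuple to a canonical form. Under the hypotheses the defining system $t=x\alpha+y\beta$, $z=x\alpha\gamma+y\beta\gamma\kappa$ is (uniquely) solvable for $\kappa$, which forces $x,y$ to be a basis of $\mathcal R^2$ and $\alpha,\beta,\gamma\in\mathcal R^{\times}$. Acting by $T_4(\mathcal R)$, replace $x,y,z$ by $x':=x\alpha$, $y':=y\beta$, $z':=z\gamma^{-1}$; then $t=x'+y'$ and $z'=x'+y'k$ with $k:=\gamma\kappa\gamma^{-1}$, a conjugate of $\kappa(x,y,z,t)$. Next act on the left by $g:=(x'\mid y')^{-1}\in GL_2(\mathcal R)$, the inverse of the matrix with columns $x',y'$; this carries the tuple to the canonical form $\big(e_1,\,e_2,\,e_1+e_2k,\,e_1+e_2\big)$ with $e_1=\binom{1}{0}$, $e_2=\binom{0}{1}$. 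Substituting this tuple back into the defining system (it gives $\alpha=\beta=\gamma=1$) shows that its cross-ratio is exactly $k$, in accordance with (3.1). Thus every admissible tuple is, up to the $GL_2(\mathcal R)\times T_4(\mathcal R)$-action, a canonical tuple $\big(e_1,e_2,e_1+e_2k,e_1+e_2\big)$ with $k$ conjugate to the original cross-ratio $\kappa(x,y,z,t)$.

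It remains to compare two canonical tuples. If $\kappa(x,y,z,t)=\mu\,\kappa(x',y',z',t')\,\mu^{-1}$, reduce both tuples to canonical form, with parameters $k$ and $k'$; then $k\sim\kappa(x,y,z,t)\sim\kappa(x',y',z',t')\sim k'$, so $k=\nu k'\nu^{-1}$ for some $\nu\in\mathcal R^{\times}$. Acting on $\big(e_1,e_2,e_1+e_2k',e_1+e_2\big)$ first by the scalar matrix $\nu\cdot\mathrm{id}\in GL_2(\mathcal R)$ on the left, and then by right multiplication of each of the four slots by $\nu^{-1}$ (an element of $T_4(\mathcal R)$), fixes $e_1$, $e_2$, and $e_1+e_2$, while sending $e_1+e_2k'$ to $e_1+e_2(\nu k'\nu^{-1})=e_1+e_2k$. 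Hence the two canonical tuples, and therefore $(x,y,z,t)$ and $(x',y',z',t')$, belong to the same orbit.

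The step I expect to be the only real obstacle is the bookkeeping behind the hypothesis ``$\kappa(x,y,z,t)$ defined and $\neq 0,1$'': one must make precise that this means exactly that the defining system is uniquely solvable for $\kappa$ — equivalently, that $x,y$ is a basis and $\alpha,\beta,\gamma$ are invertible — verify that this property is stable under the group action and holds for the canonical tuples (so that $\alpha^{-1},\beta^{-1},\gamma^{-1}$ and the matrix $(x'\mid y')^{-1}$ used above all exist), and note that a conjugate of an element $\neq 0,1$ is again $\neq 0,1$ (so the reduction of $(x',y',z',t')$ is legitimate too). Once these generalities are settled, all the displayed manipulations are routine.
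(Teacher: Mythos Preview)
The paper does not actually prove this theorem; it only states it, citing \cite{R} for the proof. So there is nothing to compare against here.

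That said, your argument is correct and is the natural one. The forward direction is exactly Corollary~3.2. For the converse, your reduction of an arbitrary admissible tuple to the canonical form $(e_1,e_2,e_1+e_2k,e_1+e_2)$ via the $T_4$-action $(x,y,z,t)\mapsto(x\alpha,y\beta,z\gamma^{-1},t)$ followed by left multiplication by $(x'\mid y')^{-1}$ is valid, and the scalar-matrix trick at the end---acting by $\nu\cdot\mathrm{id}$ on the left and by $\nu^{-1}$ on the right in each slot---does precisely conjugate the parameter $k'$ to $\nu k'\nu^{-1}$ while fixing $e_1,e_2,e_1+e_2$. The only residual work is the bookkeeping you already flag: unpacking ``$\kappa$ defined'' via the quasideterminant formula $\kappa=q_{zt}^y q_{tz}^x$ to see that the relevant $2\times2$ matrices (in particular $(x\mid y)$) are invertible and that $\alpha,\beta,\gamma\in\mathcal R^\times$; the conditions $\kappa\neq 0,1$ rule out the degenerate configurations where $z$ is a right multiple of $x$ or of $t$.
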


The following corollary shows that the cross-ratios we defined satisfy {\it cocycle conditions}
(see \cite{Lab}).
\begin{corollary} For all vectors $x,y,z,t,w$ the following equations hold
\begin{align*}
\kappa(x,y,z,t)=\kappa (w,y,z,t)\kappa(x,w,z,t)\\
\kappa (x,y,z,t)=1-\kappa(t,y,z,x),
\end{align*}
if all the cross-ratios in these formulas exist.
\end{corollary}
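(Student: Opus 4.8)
The plan is to derive both cocycle identities directly from the formula $\kappa(x,y,z,t)=q_{zt}^y\cdot q_{tz}^x$ of Theorem~3.2, using only the algebraic properties (1)--(6) of the quasi-Pl\"ucker coordinates listed in Section~2. The point is that once the cross-ratio is expressed in terms of the $q_{ij}^k$'s, the problem becomes a purely formal manipulation inside the division ring $\mathcal R$, with no further reference to the defining linear system.

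For the first identity, I would start from $\kappa(w,y,z,t)\kappa(x,w,z,t)=q_{zt}^y q_{tz}^w\cdot q_{zt}^w q_{tz}^x$. The middle pair $q_{tz}^w q_{zt}^w$ collapses to $1$ by property (4) (the special case $q_{ij}^k q_{ji}^k=1$ with the roles of $i,j$ played by $t,z$ and the upper index $w$). What remains is $q_{zt}^y q_{tz}^x=\kappa(x,y,z,t)$, which is exactly the claim. So the first identity is essentially immediate; the only thing to check is that the relevant quasi-Pl\"ucker coordinates are defined, which is covered by the hypothesis that all cross-ratios appearing exist.

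The second identity is the substantive one. Here I would write $\kappa(t,y,z,x)=q_{zx}^y q_{xz}^t$ and aim to prove $q_{zt}^y q_{tz}^x + q_{zx}^y q_{xz}^t = 1$. This is precisely the shape of the ``noncommutative Pl\"ucker identity'' (property (6)), $q_{ij}^k q_{ji}^\ell + q_{i\ell}^k q_{\ell i}^j = 1$, with the substitution $i\mapsto z$, $j\mapsto t$, $k\mapsto y$, $\ell\mapsto x$: indeed $q_{ij}^k=q_{zt}^y$, $q_{ji}^\ell=q_{tz}^x$, $q_{i\ell}^k=q_{zx}^y$, $q_{\ell i}^j=q_{xz}^t$. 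So the content of the second cocycle relation is really just property (6) read through Theorem~3.2. I should double-check that $z,t,y,x$ being four distinct points is implied by the hypotheses (the cross-ratios $\kappa(x,y,z,t)$ and $\kappa(t,y,z,x)$ being defined and nonzero forces genericity among the four vectors), and that property (6) is applied with indices in the correct positions, since the noncommutative version is sensitive to ordering.

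The main obstacle, such as it is, will be bookkeeping: making sure the index substitutions in properties (4) and (6) line up with the specific quasi-Pl\"ucker coordinates produced by Theorem~3.2, and confirming that all inverses involved are legitimate under the stated existence hypotheses. There is no analytic or geometric difficulty; the whole proof is a translation of the Section~2 identities via the formula $\kappa=q_{zt}^y q_{tz}^x$, and I would present it in two or three lines per identity.
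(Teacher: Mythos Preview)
Your proof is correct and is precisely the natural argument: the paper does not spell out a proof of this corollary, but its placement immediately after Theorem~3.2 (the formula $\kappa(x,y,z,t)=q_{zt}^y q_{tz}^x$) and the list of quasi-Pl\"ucker identities in Section~2 makes clear that the intended derivation is exactly the one you give --- property~(4) for the multiplicative cocycle relation and property~(6) for the additive one. Your index bookkeeping is accurate in both cases.
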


The last proposition can also be generalized as follows:
\begin{corollary} For all vectors $x,x_1, x_2,\dots x_n, z,t\in \mathcal R^2$
one has $$\kappa (x,x,z,t)=1$$ and
$$
\kappa(x_{n-1},x_n,z,t)\kappa(x_{n-2}, x_{n-1},z,t)\dots \kappa(x_1,x_2,z,t)=
\kappa(x_1,x_n,z,t)
$$
where we assume that all the cross-ratios exist.
\end{corollary}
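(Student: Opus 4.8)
The plan is to deduce both assertions from Theorem 3.1, which expresses the cross-ratio through quasi-Pl\"ucker coordinates as $\kappa(x,y,z,t)=q_{zt}^{y}\cdot q_{tz}^{x}$ (where we label the columns of the defining matrix by the corresponding points, and likewise write $q_{zt}^{x_m}$, $q_{tz}^{x_m}$ when the list of points is $x_1,\dots,x_n,z,t$), together with the single quasi-Pl\"ucker identity $q_{ij}^{k}q_{ji}^{k}=1$ from property (4) of Section~2.

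For the first equality this is immediate: by Theorem 3.1, $\kappa(x,x,z,t)=q_{zt}^{x}\cdot q_{tz}^{x}$, and $q_{zt}^{x}q_{tz}^{x}=1$ by property (4), so no genericity beyond the existence of these two quasi-Pl\"ucker coordinates is involved. For the product formula I would expand every factor by Theorem 3.1, so that
$$
\kappa(x_{n-1},x_n,z,t)\,\kappa(x_{n-2},x_{n-1},z,t)\cdots\kappa(x_1,x_2,z,t)
=\bigl(q_{zt}^{x_n}q_{tz}^{x_{n-1}}\bigr)\bigl(q_{zt}^{x_{n-1}}q_{tz}^{x_{n-2}}\bigr)\cdots\bigl(q_{zt}^{x_2}q_{tz}^{x_1}\bigr),
$$
and then observe that each interior block $q_{tz}^{x_m}q_{zt}^{x_m}$ (for $m=n-1,\dots,2$) equals $1$ by property (4), so that the product collapses to $q_{zt}^{x_n}q_{tz}^{x_1}=\kappa(x_1,x_n,z,t)$. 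Since $\mathcal R$ is noncommutative the only thing to watch is the order of the factors, but the cancellations take place between genuinely adjacent factors, so no reordering is needed.

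There is essentially no obstacle here; the one point worth a remark is that this direct computation never leaves the set of quasi-Pl\"ucker coordinates that are assumed defined, so the hypothesis ``all cross-ratios exist'' enters only in its mildest form. Alternatively, one can argue by induction on $n$ using only the first cocycle identity $\kappa(x,y,z,t)=\kappa(w,y,z,t)\kappa(x,w,z,t)$ of the preceding corollary, applied with $x=x_1$, $y=x_n$, $w=x_{n-1}$, so that $\kappa(x_1,x_n,z,t)=\kappa(x_{n-1},x_n,z,t)\,\kappa(x_1,x_{n-1},z,t)$ and the inductive hypothesis rewrites the last factor; but then one must also ensure that the intermediate cross-ratios $\kappa(x_1,x_j,z,t)$ are defined, which is why the quasi-Pl\"ucker computation is the cleaner route.
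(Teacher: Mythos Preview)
Your proof is correct. The paper gives no explicit argument here; the statement is presented as an immediate generalisation of Corollary~3.5, so the intended proof is the inductive one via the cocycle identity $\kappa(x,y,z,t)=\kappa(w,y,z,t)\kappa(x,w,z,t)$ that you mention as an alternative. Your primary route --- expanding every factor by Theorem~3.1 and telescoping with $q_{tz}^{x_m}q_{zt}^{x_m}=1$ --- bypasses Corollary~3.5 entirely and, as you observe, has the mild advantage that it uses only the quasi-Pl\"ucker coordinates already appearing in the given cross-ratios, whereas the inductive argument tacitly requires the intermediate $\kappa(x_1,x_j,z,t)$ to be defined. Both approaches are equally short; yours is the more self-contained.
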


%\medskip
\subsection{Noncommutative cross-ratios and permutations}

There are $24$ cross-ratios defined for vectors $x,y,z,t\in \mathcal R^2$, if we permute them.
They are related by the following formulas:

\begin{proposition} Let $x,y,z,t\in \mathcal R$. Then
\begin{align}
q_{tz}^x\kappa (x,y,z,t)q_{zt}^x=
q_{tz}^y\kappa (x,y,z,t)q_{zt}^y=\kappa (y,x,t,z);\\
q_{xz}^y\kappa (x,y,z,t)q_{zx}^y=
q_{xz}^t\kappa (x,y,z,t)q_{zx}^t=\kappa (z,t,x,y);\\
q_{yz}^x\kappa (x,y,z,t)q_{zy}^x=
q_{yz}^t\kappa (x,y,z,t)q_{zy}^t=\kappa (t,z,x,y);\\
\kappa (x,y,z,t)^{-1}=\kappa (y,x,z,t).
\end{align}
\end{proposition}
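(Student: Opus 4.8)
The plan is to reduce all four displayed identities to the quasi-Pl\"ucker calculus of Section~2. The only input from Section~3 is Theorem~3.1, which gives $\kappa(x,y,z,t)=q_{zt}^{y}q_{tz}^{x}$ and, applied to an arbitrary reordering of the four columns, $\kappa(a,b,c,d)=q_{cd}^{b}q_{dc}^{a}$. After that I would work entirely inside the list of properties of the $q_{ij}^{k}$; the ones that do all the work are the telescoping rule $q_{ij}^{k}q_{j\ell}^{k}=q_{i\ell}^{k}$ with its special case $q_{ij}^{k}q_{ji}^{k}=1$, i.e.\ $(q_{ij}^{k})^{-1}=q_{ji}^{k}$ (property~(4)), and the skew-symmetry $q_{ij}^{k}q_{jk}^{i}=-q_{ik}^{j}$ (property~(5)), which is the only device for changing the upper index.

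The last line is immediate: by Theorem~3.1, $\kappa(x,y,z,t)^{-1}=(q_{zt}^{y}q_{tz}^{x})^{-1}=q_{zt}^{x}q_{tz}^{y}=\kappa(y,x,z,t)$. The first line is pure cancellation: with $\kappa=q_{zt}^{y}q_{tz}^{x}$ one has $q_{tz}^{x}\kappa\,q_{zt}^{x}=q_{tz}^{x}q_{zt}^{y}\,(q_{tz}^{x}q_{zt}^{x})=q_{tz}^{x}q_{zt}^{y}$ and, independently, $q_{tz}^{y}\kappa\,q_{zt}^{y}=(q_{tz}^{y}q_{zt}^{y})\,q_{tz}^{x}q_{zt}^{y}=q_{tz}^{x}q_{zt}^{y}$, each using $q_{ij}^{k}q_{ji}^{k}=1$; and $q_{tz}^{x}q_{zt}^{y}=\kappa(y,x,t,z)$. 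Thus the two conjugations automatically coincide --- the content of the first line --- the coincidence being forced by the factored form of $\kappa$.

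The two middle lines carry the real computation: there the conjugating factors do not cancel against $\kappa$, so one must transport the upper index with (5). The scheme is the same each time: expand $\kappa$ by Theorem~3.1, merge an adjacent same-superscript pair by the telescoping rule, apply (5) once or twice to switch an upper index (each use emitting a sign), merge again, and read off the resulting product of two quasi-Pl\"ucker coordinates, which by Theorem~3.1 is a cross-ratio of a permutation of the four points. For the second line, from $q_{xz}^{y}\kappa\,q_{zx}^{y}$ the moves are $q_{xz}^{y}q_{zt}^{y}\mapsto q_{xt}^{y}$, then $q_{xt}^{y}\mapsto-q_{xy}^{t}q_{yt}^{x}$ by (5), then $q_{yt}^{x}q_{tz}^{x}\mapsto q_{yz}^{x}$, then $q_{yz}^{x}q_{zx}^{y}\mapsto-q_{yx}^{z}$ by (5); the two signs cancel and one is left with $q_{xy}^{t}q_{yx}^{z}=\kappa(z,t,x,y)$, and the conjugate by $q_{xz}^{t}$ reduces to the same word by a parallel chain. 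The third line runs the same way, with chain $q_{tz}^{x}q_{zy}^{x}\mapsto q_{ty}^{x}$, then $q_{yz}^{x}\mapsto-q_{yx}^{z}q_{xz}^{y}$ by (5), then $q_{xz}^{y}q_{zt}^{y}\mapsto q_{xt}^{y}$, then $q_{xt}^{y}q_{ty}^{x}\mapsto-q_{xy}^{t}$ by (5), ending at the product $q_{yx}^{z}q_{xy}^{t}$.

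The main obstacle is not conceptual but bookkeeping: every use of (5) flips an upper index and emits a sign, so the order of the moves must be arranged so that signs cancel in pairs and the upper indices always line up for the telescoping rule. For that reason I would first fix each target permutation independently by passing to the commutative limit, where conjugation is trivial and $q_{ij}^{k}=p_{jk}/p_{ik}$, so the identity collapses to a one-line check with Pl\"ucker coordinates; this is the step at which a sign or an index transposition is easiest to mishandle, and in particular it is worth pinning down with care which cross-ratio the word $q_{yx}^{z}q_{xy}^{t}$ produced by the third chain equals. Once the correct target words in the $q_{ij}^{k}$ are fixed, each of these verifications is a short finite computation needing nothing beyond the properties established in Section~2.
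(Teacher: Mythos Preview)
Your approach is correct and is the natural one: the paper does not actually supply a proof of this proposition, and the only available tools are precisely Theorem~3.1 together with properties~(4) and~(5) of the quasi-Pl\"ucker coordinates, exactly as you use them. Your verifications of (3.3), (3.4) and (3.6) go through as written.

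Your caution about the third line is well placed, and in fact it uncovers a misprint in the statement. Your chain for $q_{yz}^{x}\kappa(x,y,z,t)q_{zy}^{x}$ correctly terminates in $q_{yx}^{z}q_{xy}^{t}$, and by Theorem~3.1 this is $\kappa(t,z,y,x)$, not the printed $\kappa(t,z,x,y)$. The same word is obtained from the second conjugation: using (5) twice one has $q_{yz}^{t}q_{zt}^{y}=-q_{yt}^{z}$ and $q_{tz}^{x}=-q_{tx}^{z}q_{xz}^{t}$, whence $q_{yz}^{t}\kappa\,q_{zy}^{t}=q_{yt}^{z}q_{tx}^{z}\cdot q_{xz}^{t}q_{zy}^{t}=q_{yx}^{z}q_{xy}^{t}$. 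The commutative specialisation confirms that the printed form cannot be right: there conjugation is trivial, so (3.5) would assert $\kappa(x,y,z,t)=\kappa(t,z,x,y)$, but the explicit formula gives $\kappa(t,z,x,y)=\kappa(x,y,z,t)^{-1}$, whereas $\kappa(t,z,y,x)=\kappa(x,y,z,t)$ as required. So (3.5) should read $\kappa(t,z,y,x)$ on the right; with that correction your argument is complete.
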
 
Note again the appearance of conjugation in the noncommutative case; this happens since
$q_{ij}^k$ and $q_{ji}^k$ are inverse to each other. Also observe that using Proposition 3.7 and the cocycle condition (corollary 3.5) one can get all 24 formulas for cross-ratios of $x,y,z,t$ knowing just one of them.

%\medskip
\subsection{Noncommutative triple ratio}

%\bigskip
Let $\mathcal R$ be a division ring as above; we shall work with the  ``right $\mathcal R$-plane'' $\mathcal R^2$, i.e. we use right multiplication of vectors by the elements from $\mathcal R$. Consider the triangle with vertices $O(0,0),\ X(x,0)$, and $Y(0,y)$ in $\mathcal R^2$. Let $A(a_1,a_2)$ be a point on side $XY$, $B(b,0)$ be a point on side $OX$ and $C(0,c)$ be a point on side $OY$. Recall that the geometric condition $A\in XY$ means
$$x^{-1}a_1+y^{-1}a_2=1.$$

Let $P(p_1,p_2)$ be the point of intersection of $XC$ and $YB$. Then one has
$$
p_1=(y-c)(yb^{-1}-cx^{-1})^{-1}, \ p_2=(x-b)(by^{-1}-xc^{-1})^{-1}\ .
$$

Let $Q$ be the point of intersection of $OP$ and $XY$. The non-commutative cross ratio for $Y,A,Q,X$ is equal to
$$
x^{-1}(1-p_1p_2^{-1}a_2a_1^{-1})^{-1}x.
$$
By changing the order of $Y,A,Q,X$ we get up to a conjugation
\begin{equation}
%\begin{aligned}
p_1p_2^{-1}a_2a_1^{-1}=-(y-c)(yb^{-1}-cx^{-1})^{-1}(x-b)^{-1}xc^{-1}(yb^{-1}-cx^{-1})bx^{-1}(x-a_1)a_1^{-1} .% \ \ \ \ (*)\end{aligned}
\end{equation}
In the commutative case (up to a sign) we have
$$
p_1p_2^{-1}a_2a_1^{-1}=(y-c)c^{-1}b(x-b)^{-1}(x-a_1)a_1^{-1}
$$
(compare it with the Ceva theorem in elementary geometry).

Note that $(x-a_1)^{-1}a_1^{-1}=YA/AX$ and (3.7) is a (non-commutative analogue of) triple cross-ration (see section 6.5 in the book by Ovsienko and Tabachnikov \cite{OT}) 
 
%\medskip
\subsection {Noncommutative angles and cross-ratios.}

%\bigskip
Let $\mathcal R$ be a noncommutative division ring. Recall that noncommutative angles (or noncommutative $\lambda$-lengths) $T_i^{jk}=T_i^{kj}$ for vectors $A_1,A_2, A_3,A_4\in {\mathcal R}^2,\ A_i=(a_{1i},a_{2i})$ are defined by the formulas
$$
T_i^{jk}=x_{ji}^{-1}x_{jk}x_{ik}^{-1}.
$$
Here $x_{ij}= a_{1j} - a_{1i}a_{2i}^{-1}a_{2j}$, or $x_{ij} = a_{2j} - a_{2i}a_{1i}^{-1}a_{1j}$ (see \cite{BR18}). On the other hand the cross-ratio $\kappa (A_1,A_2,A_3,A_4)=\kappa(1,2,3,4)$ (see definition 2.2) is
$$
\kappa(1,2,3,4)=q_{34}^2q_{43}^1.
$$
It implies that
$$
\kappa (1,2,3,4)=x_{43}^{-1}(T_4^{23})^{-1}T_4^{31}x_{43}\ .
$$
In other words, cross-ratio is a ratio of two angles up to a conjugation. % Recall that  $$T_i^{jk}=x_{ji}^{-1}x_{jk}x_{ik}^{-1}=x_{ki}^{-1}x_{kj}x_{ij}^{-1}\ .$$

Under the transformation $x_{ij}\mapsto \lambda _ix_{ij}$ we have
$$
T_i^{jk}\mapsto T_i^{jk}\cdot \lambda_i^{-1}\ .
$$
Also note that
$$
T_i^{jk}(T_i^{mk})^{-1}=q_{ik}^jq_{ki}^m
$$
i.e. $T_i^{jk}(T_i^{mk})^{-1}$ is a cross-ratio. Further details on the properties of $T_i^{jk}$ can be found in \cite{BR18}.

\section{ Noncommutative Menelaus'  and Ceva's theorems}
\bigskip

\subsection{Higher rank quasi-determinants: reminder}
%\bigskip
Let $A=(a_{ij})$, $i,j=1,2,\dots,n$ be a matrix over a ring. Denote by $A^{pq}$ the submatrix of matrix $A$ obtained from $A$ by removing the
$p$-th row and the $q$-th column. Let $r_p=(a_{p1}, a_{p2},\dots, \hat a_{pq},\dots a_{pn})$ be the row submatrix and 
$c_q=(a_{1q}, a_{2q},\dots, \hat a_{pq},\dots a_{nq})^T$ be the column submatrix of $A$.  Following \cite{GR} we say that the quasideterminant
$|A|_{pq}$ is defined if and only if the submatrix $A^{pq}$ is invertible. In this case
$$
|A|_{pq}=a_{pq}-r_p(A^{pq})^{-1}c_q\ .
$$

In the commutative case $|A|_{pq}=(-1)^{p+q}det A/det A^{pq}$. It is sometimes convenient to use the notation
$$
|A|_{pq}=\left | \begin{matrix} \dots &\dots&a_{1q}&\dots\\
\dots&\dots&\dots&\dots\\
a_{p1}&\dots&\boxed {a_{pq}}&\dots\\
\dots&\dots&\dots &\dots \end{matrix}\right |\ .
$$

\medskip
\subsection{Commutative Menelaus'  and Ceva's theorems}
We follow the affine geometry proof. Let the points $D,E,F$ lie on the straight lines $AB,BC$ and $AC$ respectively (see figure 1 (\textit{a})). Denote by  $\lambda_D$ the coefficient for homothety with center $D$ sending $B$ to $C$, by $\lambda_E$ the coefficient for homothety with center $E$ sending $C$ to $A$, and by $\lambda_F$ the coefficient for homothety with center $F$ sending $A$ to $B$. Note that (in a generic case)
\[
\begin{aligned}
\lambda_D&=(b_i-d_i)^{-1}(c_i-d_i), \  i=1,2,\\
\lambda_E&=(c_i-e_i)^{-1}(a_i-e_i), \  i=1,2,\\
\lambda_F&=(a_i-f_i)^{-1}(b_i-f_i), \  i=1,2.
\end{aligned}
\]
Here $(a_1,a_2)$ are the coordinates of $A$ etc. We shall omit the indices and write $\lambda_D=(b-d)^{-1}(c-d)$, etc.

\medskip
\begin{theorem}\label{ComMenelaus}
 Points $E,D,F$ belong to a straight line if an only if 
$$
(a-f)^{-1}(b-f)\cdot (c-e)^{-1}(a-e)\cdot (b-d)^{-1}(c-d)=1\ .
$$
\end{theorem}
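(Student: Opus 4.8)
The plan is to give the classical affine-geometry proof, realizing the three ratios $\lambda_D,\lambda_E,\lambda_F$ as the scaling factors of three homotheties and analyzing their composition. First I would introduce $h_D$, the homothety with center $D$ and ratio $\lambda_D$ (so $h_D(B)=C$), $h_E$, the homothety with center $E$ and ratio $\lambda_E$ (so $h_E(C)=A$), and $h_F$, the homothety with center $F$ and ratio $\lambda_F$ (so $h_F(A)=B$); then set $g=h_F\circ h_E\circ h_D$. By construction $g(B)=h_F(h_E(h_D(B)))=B$. The linear part of $g$ is multiplication by $\lambda_F\lambda_E\lambda_D$, which — using commutativity — equals exactly the left-hand side of the asserted identity $(a-f)^{-1}(b-f)\,(c-e)^{-1}(a-e)\,(b-d)^{-1}(c-d)$.

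Next I would recall the two standard facts about the group of \emph{dilatations} of an affine plane: (i) a composition of homotheties is again a homothety when the product of the ratios is $\neq 1$, and a translation when that product equals $1$; moreover a non-identity homothety has a unique fixed point, its center; and (ii) the center of a composition of two homotheties lies on the line through their two centers — so, inductively, once $D,E,F$ are collinear, the center of $g$ lies on the line they span. (Fact (ii) is the one-line affine computation that $\big((1-\nu)Q+\nu x\big)\circ\big((1-\mu)P+\mu x\big)$ has fixed point an affine combination of $P$ and $Q$.)

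For the implication ``$\Leftarrow$'': if $\lambda_F\lambda_E\lambda_D=1$ then $g$ is a translation by (i), and since $g(B)=B$ we get $g=\mathrm{id}$, hence $h_F\circ h_E=h_D^{-1}$. The left side is a homothety — its ratio is $\lambda_E\lambda_F=\lambda_D^{-1}\neq 1$ in the generic case — with center on line $EF$ by (ii), while the right side is a homothety with center $D$; comparing the unique centers forces $D\in EF$, i.e. $D,E,F$ are collinear. For ``$\Rightarrow$'': assume $D,E,F$ lie on a line $\ell$; if $\lambda_F\lambda_E\lambda_D\neq 1$, then $g$ is a non-identity homothety whose center lies on $\ell$ by (ii), but that center is the unique fixed point $B$, so $B\in\ell$ — contradicting non-degeneracy of the Menelaus configuration, since the transversal through $D\in AB$, $E\in BC$, $F\in AC$ avoids the vertices. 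Hence $\lambda_F\lambda_E\lambda_D=1$, which is the claimed equation.

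The only genuine subtlety — which the ``generic case'' hypothesis is there to absorb — is the bookkeeping of degenerate subconfigurations: one of the $\lambda$'s could be $1$, or a partial product $\lambda_D\lambda_E$ or $\lambda_E\lambda_F$ could equal $1$, turning an intermediate composition into a translation and requiring a small variant of fact (ii) (one checks the relevant translation vector is parallel to $\ell$, so the center still lands on $\ell$ through $F$). I expect this case analysis, together with making precise which vertex the transversal is required to miss, to be the main point needing care; the rest is the textbook manipulation of affine dilatations.
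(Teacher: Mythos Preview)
Your proposal is correct and follows essentially the same affine-homothety approach as the paper: compose the three homotheties, observe the composition fixes $B$, and use the fact that the center of a composed dilatation lies on the line through the individual centers to force the product of ratios to be $1$ (and conversely). Your write-up is in fact more careful than the paper's, which gives only the forward direction tersely and dismisses the converse with ``by contradiction''; your explicit handling of the $\Leftarrow$ direction via $h_F\circ h_E=h_D^{-1}$ and your attention to the degenerate subcases are genuine improvements.
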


\begin{figure}[tb]
\includegraphics[scale =0.8]{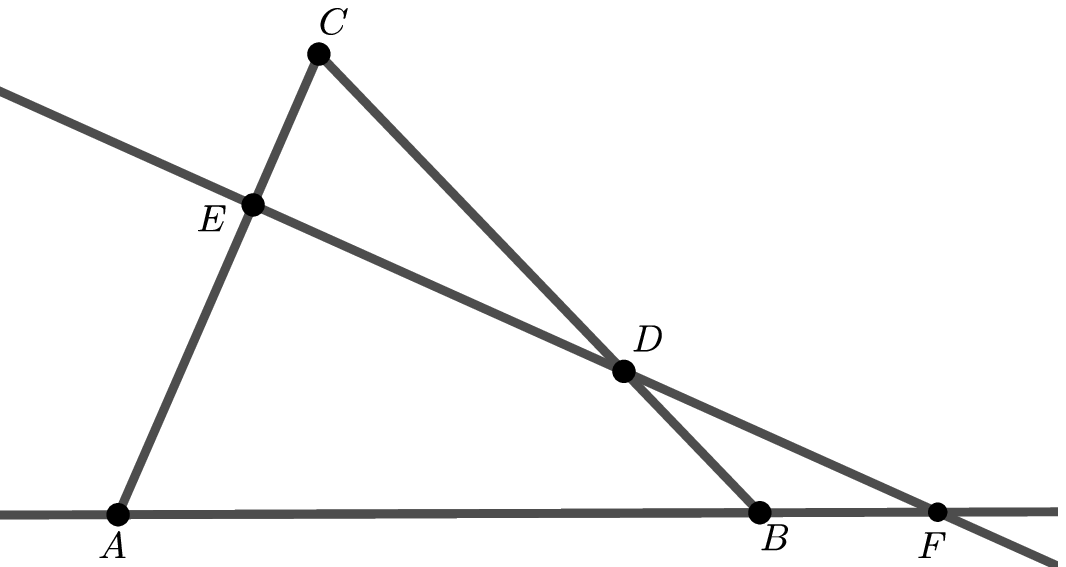}\ \includegraphics[scale=0.8]{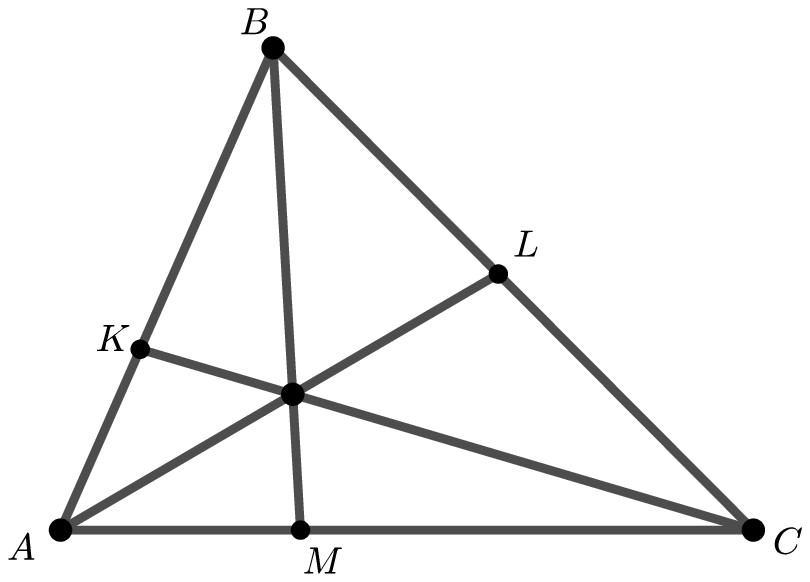}\\
\begin{center}
(\textit{a})\hspace{8cm}(\textit{b})
\end{center}
\caption{The classical Menelaus (part (\textit{a})) and Ceva (part (\textit{b})) theorems.}
\end{figure}
\noindent This is the Menelaus' theorem in the commutative case.
%\medskip
%\noindent 
\begin {proof}
The composition of transformations $\lambda_D, \lambda_E, \lambda _F$ leaves the point $B$ unchanged, thus it is equal to a homothety with center $B$. On the other hand, if points belong to the same straight line then the center should belong to this line too, so the composition is equal to identity. So
$$\lambda_F \lambda_E \lambda _D=1$$
i.e.
$$
(a-f)^{-1}(b-f)\cdot (c-e)^{-1}(a-e)\cdot (b-d)^{-1}(c-d)=1.
$$
The opposite statement can be proved by contradiction.
\end{proof}
%\medskip
%\noindent
Somewhat dually, one obtains Ceva theorem (see figure 1 (\textit{b})):
\begin{theorem}\label{ComCeva}
 Lines $AD$, $BE$ and $CF$ intersect each other in a point $O$ if and only if
$$
(e-a)^{-1}(e-c)\cdot (f-b)^{-1}(f-a)\cdot (d-c)^{-1}(d-b)= -1\ .
$$
\end{theorem}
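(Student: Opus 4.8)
The plan is to mimic the affine-geometry proof of the Menelaus theorem (Theorem~\ref{ComMenelaus}) just given, exploiting the duality between the two statements. Recall that in Theorem~\ref{ComMenelaus} the points $D,E,F$ lie on the sides $AB,BC,CA$ and the relevant composition $\lambda_F\lambda_E\lambda_D$ is a homothety fixing $B$; collinearity forces it to be the identity. For Ceva's theorem the configuration is ``dual'': instead of asking the three points to be collinear we ask the three \emph{cevians} $AD$, $BE$, $CF$ to be concurrent. First I would set up coordinates with $O$ at the common intersection point, or alternatively keep the triangle $ABC$ and introduce the point $O$ as an unknown; the cleanest route is probably to place $A,B,C$ as in the Menelaus proof and record each of $D,E,F$ both as a point on a side (giving the homothety coefficients $(d-c)^{-1}(d-b)$ etc.) and as a point on the cevian through the opposite vertex and $O$.

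The key steps, in order, would be: (1) express the condition ``$D\in BC$'' together with ``$D\in AO$'' as a pair of equations relating $d$ to $b,c$ and to $a,o$, and similarly for $E$ on $CA\cap BO$ and $F$ on $AB\cap CO$; (2) from the first description read off $\lambda_D':=(d-c)^{-1}(d-b)$, $\lambda_E':=(e-a)^{-1}(e-c)$, $\lambda_F':=(f-b)^{-1}(f-a)$, which are exactly the three factors appearing in the claimed identity; (3) show that concurrency is equivalent to the composite $\lambda_E'\lambda_F'\lambda_D'$ (in the appropriate order) being the homothety of coefficient $-1$, i.e. the point reflection, centered at the common point — heuristically, going $C\to B$ along one cevian pencil, then $B\to A$, then $A\to C$ traces out a loop that closes up only after picking up the factor $-1$ familiar from the commutative Ceva identity. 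Noncommutatively one must be careful that ``homothety with coefficient $\lambda$'' means $v\mapsto v\lambda$ (right multiplication, as in the triple-ratio subsection), so the composition order and the side on which scalars act have to be tracked consistently; the signs $p_{ij}=-p_{ji}$ and the computation in \S3.3 (the noncommutative Ceva-type formula~(3.7)) are the right template.

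Alternatively — and this may be the slicker write-up — one can deduce Ceva directly from noncommutative Menelaus by the classical trick: apply Theorem~\ref{ComMenelaus} to the transversal $E,O,?$ cutting triangle $ABD$ (line $BOE$ meets $AD$ at $O$, meets $AB$ at $F$, meets $BD$ extended appropriately), and again to a second auxiliary triangle, then multiply the two Menelaus relations and cancel the common factors. In the commutative case the product of two Menelaus ratios equal to $1$ collapses to the Ceva ratio equal to $-1$; here the same cancellation should go through provided the conjugating factors introduced by noncommutativity in the two applications are inverse to each other, which is exactly what the structure of the $q^k_{ij}$'s (with $q^k_{ij}q^k_{ji}=1$, property~(4)) guarantees.

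The main obstacle I anticipate is bookkeeping of conjugations and of the left/right placement of the scalar coefficients: in the noncommutative setting the ``homothety coefficients'' do not simply multiply, so the naive statement $\lambda_E'\lambda_F'\lambda_D'=-1$ is only true up to conjugation unless the intermediate identifications are chosen so that the conjugating elements telescope. Getting these to telescope — equivalently, choosing the representatives of $D,E,F$ on the cevians with the correct normalizing scalars so that the final product is literally $-1$ rather than $\mu(-1)\mu^{-1}$ — is the delicate point, and it is precisely why the statement of Theorem~\ref{ComCeva} can be written without a conjugation while intermediate formulas such as~(3.7) carry one. I would handle this by fixing once and for all that all points are written with second coordinate normalized (as in the explicit $z_2^{-1}(\cdots)z_2$ formula after Theorem~2.4 is avoided), or by carrying the conjugation through and observing at the end that it is trivial because $B$ (or $O$) is a fixed point of the composite.
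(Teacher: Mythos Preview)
First, a context correction: Theorem~\ref{ComCeva} lives in \S4.2, titled \emph{Commutative Menelaus' and Ceva's theorems}, and the sentence immediately after the statement reads ``This is the Ceva' theorem in the commutative case.'' So the entire second half of your proposal --- the worry about conjugations telescoping, about $q^k_{ij}q^k_{ji}=1$, about whether the answer is only $\mu(-1)\mu^{-1}$ --- is aimed at the wrong target. Here everything commutes and the identity is literal; the noncommutative versions are treated separately in \S4.3--\S4.5. Relatedly, the paper does not actually supply a proof of this theorem: the only thing offered is the phrase ``Somewhat dually, one obtains Ceva theorem'' immediately after the Menelaus argument, and then the statement is recorded.

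Of your two routes, the second one (apply the already-proved Menelaus theorem to two sub-triangles cut by lines through $O$, then multiply and cancel the shared ratio on the cevian $AD$) is correct and classical; in the commutative setting the cancellation is immediate, with no conjugation bookkeeping needed.

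Your first route, though, has a real gap. With the homotheties $h_D,h_F,h_E$ having the ratios $\lambda'_D,\lambda'_F,\lambda'_E$ you wrote, the composite $h_E\circ h_F\circ h_D$ sends $C\mapsto B\mapsto A\mapsto C$ and hence fixes the \emph{vertex} $C$; whenever the product of ratios is $\neq 1$ it is therefore a homothety centred at $C$, not ``centred at the common point $O$'' as you wrote. So the product equals $-1$ iff the composite is the point reflection through $C$ --- but you have given no argument linking \emph{that} geometric condition to the concurrency of $AD,BE,CF$. The Menelaus argument works because ``composite is the identity'' forces the three centres $D,E,F$ onto one line (the centre of $h_F\circ h_E$ lies on line $EF$ and must coincide with $D$). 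There is no analogous one-step implication from ``composite is the point reflection in $C$'' to ``the cevians through the opposite vertices concur''; in particular the composite does not fix $O$. As written, your step~(3) would be assuming the Ceva relation rather than proving it. Stick with the Menelaus-twice derivation.
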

This is the Ceva' theorem in the commutative case.

%\medskip
\subsection{Non-commutative Menelaus'  and Ceva's theorems}

%We use the second picture from the Wikipedia article on the Menelaus' theorem and the first picture from the article on the Ceva theorem.

%Let $R$ be a noncommutative division ring. Consider $\mathcal R^2$ as the right vector space over $\mathcal R$.
%For a point $X\in \mathcal R^2$ denote by $x_i$ its $i$-th coordinate, $i=1,2$.

%\medskip \noindent
%\begin{lemma}\label{NCalign}
% Points $X,Y,Z$ belong to the same straight line if and only if
%$$
%\left | \begin{matrix} x_1 & y_1 & z_1\\
%x_2 &y_2 & z_2\\
%1 & 1 & {\boxed 1} \end{matrix}\right | =0\ .
%$$
%\end{lemma}

%\medskip \noindent
%\begin{corollary}\label{NCaligncond}
%$X,Y,Z$ belong to one straight line if and only if
%$$
%(y_1-x_1)^{-1}(z_1-x_1)=(y_2-x_2)^{-1}(z_2-y_2)\ .
%$$
%\end{corollary}

Let $\mathcal R$ be a noncommutative division ring. Consider $\mathcal R^2$ as the right vector space over $\mathcal R$. For a point $X\in \mathcal R^2$ denote by $x_i$ its $i$-th coordinate, $i=1,2$. Here and below we shall use the properties of quasideterminants, see \cite{GR,GR1}:
\begin{proposition}\label{NCalign}
 Let points $X$ and $Y$ are in generic position, i.e. that matrix 
$$\left (\begin{matrix}x_1&y_1\\
x_2&y_2\end{matrix} \right )
$$
is invertible. Then
%\begin{lemma}\label{NCalign}
the points $X,Y,Z\in\mathcal R^2$ belong to the same straight line (in the sense of linear algebra) if and only if
$$
\left | \begin{matrix} x_1 & y_1 & z_1\\
x_2 &y_2 & z_2\\
1 & 1 & {\boxed 1} \end{matrix}\right | =0\ .
$$
\end{proposition}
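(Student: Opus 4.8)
The plan is to reduce the collinearity statement to the vanishing of a single quasideterminant by exploiting the right-linear structure of $\mathcal{R}^2$ and the formula $|A|_{pq}=a_{pq}-r_p(A^{pq})^{-1}c_q$. First I would fix notation: the three points $X,Y,Z$ lie on a common line (in the sense of linear algebra over the right vector space $\mathcal R^2$) precisely when their homogeneous lifts $\hat X=(x_1,x_2,1)^T$, $\hat Y=(y_1,y_2,1)^T$, $\hat Z=(z_1,z_2,1)^T$ in $\mathcal R^3$ are \emph{right}-linearly dependent, i.e. there exist $\alpha,\beta,\gamma\in\mathcal R$, not all zero, with $\hat X\alpha+\hat Y\beta+\hat Z\gamma=0$. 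Since $X$ and $Y$ are in generic position, the $2\times 2$ submatrix $\left(\begin{smallmatrix}x_1&y_1\\x_2&y_2\end{smallmatrix}\right)$ is invertible; this forces $\gamma\neq 0$ in any nontrivial relation (if $\gamma=0$ then $\hat X\alpha+\hat Y\beta=0$ with the first two coordinates giving invertibility $\Rightarrow \alpha=\beta=0$), so after right-multiplying by $\gamma^{-1}$ we may assume $\gamma=1$. Thus collinearity is equivalent to the solvability of
\[
\begin{pmatrix}x_1&y_1\\x_2&y_2\end{pmatrix}\begin{pmatrix}\alpha\\\beta\end{pmatrix}=-\begin{pmatrix}z_1\\z_2\end{pmatrix},\qquad \alpha+\beta=-1,
\]
i.e. $(\alpha,\beta)=-\left(\begin{smallmatrix}x_1&y_1\\x_2&y_2\end{smallmatrix}\right)^{-1}(z_1,z_2)^T$ must additionally satisfy $\alpha+\beta=-1$, equivalently $(1,1)\left(\begin{smallmatrix}x_1&y_1\\x_2&y_2\end{smallmatrix}\right)^{-1}(z_1,z_2)^T = 1$.

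Next I would recognize the left-hand side as exactly the quasideterminant expansion. With $A=\left(\begin{smallmatrix}x_1&y_1&z_1\\x_2&y_2&z_2\\1&1&1\end{smallmatrix}\right)$ and $(p,q)=(3,3)$, the submatrix $A^{33}=\left(\begin{smallmatrix}x_1&y_1\\x_2&y_2\end{smallmatrix}\right)$ is invertible by the genericity hypothesis, so $|A|_{33}$ is defined and
\[
|A|_{33}=1-(1,1)\begin{pmatrix}x_1&y_1\\x_2&y_2\end{pmatrix}^{-1}\begin{pmatrix}z_1\\z_2\end{pmatrix}.
\]
Comparing with the condition derived above, $X,Y,Z$ are collinear if and only if $(1,1)\left(\begin{smallmatrix}x_1&y_1\\x_2&y_2\end{smallmatrix}\right)^{-1}(z_1,z_2)^T=1$, which is exactly $|A|_{33}=0$. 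This completes the argument.

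The only genuinely delicate point is the reduction to $\gamma=1$ and the uniqueness of $(\alpha,\beta)$: one must be careful that over a noncommutative division ring "linear dependence" is taken consistently on the right, that the coefficient of $\hat Z$ can be normalized to $1$ (using invertibility of the $X,Y$ block to rule out the degenerate cases), and that the resulting $(\alpha,\beta)$ is then genuinely unique so that the single scalar equation $\alpha+\beta=-1$ captures collinearity with no extra conditions. Everything else is a direct substitution into the definition of the quasideterminant $|A|_{33}$. I do not expect to need any further machinery beyond Proposition/Definition of quasideterminants already recalled in this section.
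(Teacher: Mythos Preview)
Your proposal is correct and follows essentially the same route as the paper. The paper's proof is simply more terse: it writes collinearity directly as $X\lambda + Y\mu = Z$ with $\lambda+\mu=1$ (your $\lambda=-\alpha,\ \mu=-\beta$), observes that invertibility of $A^{33}$ determines $\lambda,\mu$ uniquely, and then reads off $|A|_{33}=1-\lambda-\mu$ from the quasideterminant formula; you arrive at the identical scalar condition via the homogeneous lifts in $\mathcal R^3$, which is just a repackaging of the same computation.
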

\begin{proof} From the general theory of quasideterminants it follows that that
$$
\left | \begin{matrix} x_1 & y_1 & z_1\\
x_2 &y_2 & z_2\\
1 & 1 & {\boxed 1} \end{matrix}\right | = 1 - \lambda -\mu
$$
where $\lambda, \mu\in R$ satisfy the equation $X\lambda + Y\mu =Z$.
Note that $X$, $Y$ and $Z$ belong to the same straight line if and only if there exists $\lambda + \mu =1$.
\end{proof} 

%\medskip \noindent
\begin{corollary}\label{NCaligncond}
Assume that $x_i-y_i\in\mathcal R,\ i=1,2$ are invertible. Then $X,Y,Z$ belong to one straight line if and only if
$$
(y_1-x_1)^{-1}(z_1-x_1)=(y_2-x_2)^{-1}(z_2-x_2)\ .
$$
\end{corollary}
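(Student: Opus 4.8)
The plan is to derive Corollary~\ref{NCaligncond} directly from Proposition~\ref{NCalign} by unwinding the quasideterminant condition
$$
\left | \begin{matrix} x_1 & y_1 & z_1\\ x_2 & y_2 & z_2\\ 1 & 1 & {\boxed 1} \end{matrix}\right | =0
$$
into a statement about the coefficients $\lambda,\mu$ in $X\lambda + Y\mu = Z$. First I would recall, as in the proof of Proposition~\ref{NCalign}, that collinearity is exactly the existence of $\lambda,\mu\in\mathcal R$ with $X\lambda+Y\mu=Z$ and $\lambda+\mu=1$. Writing this out coordinatewise gives the system $x_1\lambda+y_1\mu=z_1$, $x_2\lambda+y_2\mu=z_2$, together with $\mu = 1-\lambda$. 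Substituting the last relation into each of the two coordinate equations yields $x_i\lambda + y_i(1-\lambda) = z_i$, i.e. $(x_i-y_i)\lambda = z_i-y_i$, equivalently $(y_i-x_i)\lambda = y_i - z_i$, for $i=1,2$.

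Next I would exploit invertibility of $y_i-x_i$: from $(y_i-x_i)\lambda = y_i-z_i$ we get $\lambda = (y_i-x_i)^{-1}(y_i-z_i)$ for each $i=1,2$. So collinearity forces the two expressions $(y_1-x_1)^{-1}(y_1-z_1)$ and $(y_2-x_2)^{-1}(y_2-z_2)$ to coincide (both equal to the single scalar $\lambda$). Conversely, if these two expressions agree, call their common value $\lambda$, then setting $\mu=1-\lambda$ one recovers $(y_i-x_i)\lambda = y_i-z_i$, hence $x_i\lambda+y_i\mu=z_i$ for $i=1,2$, hence $X\lambda+Y\mu=Z$ with $\lambda+\mu=1$, so $X,Y,Z$ are collinear. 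This establishes the equivalence; it remains only to reconcile the bookkeeping with the form stated in the corollary, namely $(y_1-x_1)^{-1}(z_1-x_1)=(y_2-x_2)^{-1}(z_2-x_2)$.

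To match the stated form one observes $z_i-x_i = (z_i - y_i) + (y_i - x_i)$, so $(y_i-x_i)^{-1}(z_i-x_i) = (y_i-x_i)^{-1}(z_i-y_i) + 1 = 1 - \lambda$ using the identity just derived. Thus $(y_1-x_1)^{-1}(z_1-x_1) = 1-\lambda = (y_2-x_2)^{-1}(z_2-x_2)$ precisely when the $\lambda$'s computed from the two coordinates agree, which is the content of the previous paragraph. Hence the condition in the corollary is equivalent to collinearity. I would present this as a short computation: collinearity $\iff$ $\exists\lambda$ with $(y_i-x_i)^{-1}(z_i-x_i)=1-\lambda$ for $i=1,2$ $\iff$ the two left-hand sides are equal.

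The only subtlety worth flagging — the ``main obstacle,'' though it is mild — is the noncommutative care needed in the substitution step: one must keep $\lambda$ strictly on the right throughout (the vectors are right $\mathcal R$-module elements, so $X\lambda+Y\mu=Z$ means each coordinate satisfies $x_i\lambda + y_i\mu = z_i$ with scalars on the right), and one must invert $y_i-x_i$ on the left, never rearranging a product. One should also note that the genericity hypothesis of Proposition~\ref{NCalign} (invertibility of the $2\times 2$ matrix with columns $X,Y$) is implied, in the relevant situations, by the corollary's hypothesis together with nondegeneracy of $Z$; but since the corollary simply adds ``$x_i-y_i$ invertible'' as a standing assumption and invokes Proposition~\ref{NCalign}, I would just cite that proposition and not belabor the point. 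Everything else is a direct, essentially one-line, algebraic manipulation.
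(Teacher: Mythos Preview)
Your argument is correct, but it proceeds by a different route than the paper. The paper does not unpack the affine combination $X\lambda+Y\mu=Z$ coordinatewise; instead it invokes a quasideterminant identity to factor the $3\times 3$ quasideterminant of Proposition~\ref{NCalign} as
\[
\left | \begin{matrix} x_1 & y_1 & z_1\\ x_2 & y_2 & z_2\\ 1 & 1 & \boxed{1} \end{matrix}\right |
= -\left |\begin{matrix} x_1 & y_1-x_1\\ \boxed{x_2} & y_2-x_2\end{matrix}\right |^{-1}
\cdot\left |\begin{matrix} y_1-x_1 & z_1-x_1\\ y_2-x_2 & \boxed{z_2-x_2}\end{matrix}\right |,
\]
and then observes that the right-hand $2\times 2$ quasideterminant vanishes precisely when $(y_1-x_1)^{-1}(z_1-x_1)=(y_2-x_2)^{-1}(z_2-x_2)$. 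Your approach is more elementary---it uses nothing beyond the affine characterization of collinearity and the invertibility hypothesis, and it sidesteps the need to cite any structural identity for quasideterminants. The paper's approach, on the other hand, exhibits the corollary as an instance of the general quasideterminant calculus being developed, which fits the surrounding narrative. Both are short; yours is perhaps easier to verify from scratch, while the paper's keeps the argument inside the quasideterminant formalism.
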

\begin{proof} Note that
$$
\left | \begin{matrix} x_1 & y_1 & z_1\\
x_2 &y_2 & z_2\\
1 & 1 & {\boxed 1} \end{matrix}\right | = 
-\left |\begin{matrix} x_1&y_1-x_1\\
\boxed {x_2}&y_2-x_2\end{matrix} \right |^{-1}\cdot \left 
|\begin{matrix} y_1-x_1&z_1-x_1\\
y_2-x_2&\boxed {z_2-x_2}\end{matrix} \right |
$$
and that $(y_1-x_1)^{-1}(z_1-x_1)=(y_2-x_2)^{-1}(z_2-x_2)$ if and only if
$$
\left |\begin{matrix} y_1-x_1&z_1-x_1\\
y_2-x_2&\boxed {z_2-x_2}\end{matrix} \right | = 0\ .
$$
\end{proof} 

%\medskip
\subsection{NC analogue of Konopelchenko equations}
Let again $\mathcal R$ be a division ring. Consider $\mathcal R^2$ as the right module over $\mathcal R$.

%\medskip
\begin{proposition}
Let $F_1=(x_1,y_1),\ F_2=(x_2,y_2)$ be two points in $\mathcal R^2$ in a generic position. Then the equation of the straight line $L_{12}$ passing through $F_1$ and $F_2$ is
$$
(y_2-y_1)^{-1}(y-y_1)=(x_2-x_1)^{-1}(x-x_1).
$$
\end{proposition}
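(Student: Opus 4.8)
The plan is to derive the equation of the line $L_{12}$ directly from Corollary~\ref{NCaligncond}, treating the generic point $Z = (x,y)$ on the line as the third point. Concretely, I would set $X = F_1 = (x_1, y_1)$, $Y = F_2 = (x_2, y_2)$, and $Z = (x,y)$, and verify that the genericity hypothesis of the corollary — invertibility of $x_2 - x_1$ and $y_2 - y_1$ — is exactly the ``generic position'' assumption on $F_1, F_2$ here (this is what guarantees $F_1 \neq F_2$ and that neither the ``$x$-difference'' nor the ``$y$-difference'' degenerates, so that the relevant quasideterminants are defined). Then Corollary~\ref{NCaligncond} says that $Z$ lies on the line through $F_1$ and $F_2$ if and only if
$$
(x_2 - x_1)^{-1}(x - x_1) = (y_2 - y_1)^{-1}(y - y_1),
$$
which is precisely the asserted equation (after swapping the two sides). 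So the proof is essentially a one-line application of the previous corollary, with a sentence of bookkeeping to match up the roles of the coordinates.

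A secondary point worth addressing explicitly is \emph{why} the locus cut out by this single equation deserves to be called ``the straight line $L_{12}$'': one should note that $F_1$ itself satisfies the equation trivially (both sides vanish when $x = x_1$, $y = y_1$), and that $F_2$ satisfies it because $(x_2-x_1)^{-1}(x_2-x_1) = 1 = (y_2-y_1)^{-1}(y_2-y_1)$. Combined with Proposition~\ref{NCalign} (or directly with Corollary~\ref{NCaligncond}), this shows the solution set is exactly the affine line through $F_1$ and $F_2$ in the linear-algebra sense, i.e. $\{F_1 + (F_2 - F_1)t : t \in \mathcal R\}$ — indeed, parametrizing $x = x_1 + (x_2-x_1)t$ forces $(x_2-x_1)^{-1}(x-x_1) = t$, and the equation then reads $(y_2-y_1)^{-1}(y-y_1) = t$, i.e. $y = y_1 + (y_2-y_1)t$, recovering the standard parametrization with a common right-multiplier $t$.

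The only genuine subtlety — and the step I would be most careful about — is the noncommutativity of the coordinate ring: one must not be tempted to ``clear denominators'' symmetrically or to rewrite the equation as $(x-x_1)(x_2-x_1)^{-1} = (y-y_1)(y_2-y_1)^{-1}$, since left and right quotients differ. The correct statement keeps the inverse factors on the \emph{left}, matching the convention in Corollary~\ref{NCaligncond}, which in turn reflects the choice of $\mathcal R^2$ as a \emph{right} module (so that the common parameter $t$ appears on the right in the parametrization). I would therefore phrase the proof so that every manipulation is an identity between elements of $\mathcal R$ with inverses appearing only where Corollary~\ref{NCaligncond} places them, invoking that corollary verbatim rather than re-deriving the quasideterminant identity. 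No other obstacle is expected; the result is a direct corollary of the material already developed.
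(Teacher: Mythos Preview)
Your proposal is correct and follows precisely the approach the paper itself indicates: the proposition (together with its corollary) is stated there as a ``straightforward consequence'' of Proposition~\ref{NCalign} and Corollary~\ref{NCaligncond}, and your argument is exactly that specialization, with the appropriate translation between the coordinate conventions $(x_1,x_2)$ of \S4.3 and $(x_i,y_i)$ of \S4.4 (which the paper flags in its ``Warning''). Your additional remarks on the parametrization and on keeping inverses on the left are sound and helpful elaborations, but the core step matches the paper's intended one-line deduction.
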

\begin{corollary}. An equation of the line $L_{12}'$ parallel to $L$ and passing through $(0,0)$ is
$$
(y_2-y_1)^{-1}y=(x_2-x_1)^{-1}x,
$$
i.e. any point $F_{12}$ on $L_{12}'$ has coordinates $((x_2-x_1)f_{12}, (y_2-y_1)f_{12})$.
\end{corollary}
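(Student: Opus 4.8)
The plan is to read the corollary off the right-module description of lines in $\mathcal R^2$, using the preceding proposition as the reference point. First I would fix the convention that two affine lines in the right $\mathcal R$-module $\mathcal R^2$ are \emph{parallel} when one is a translate of the other. Concretely, the line $L=L_{12}$ of the proposition is the coset $F_1+D$, where $D=\{(F_2-F_1)s : s\in\mathcal R\}$ is the sub-module spanned by the direction vector $F_2-F_1=(x_2-x_1,\,y_2-y_1)$; the line parallel to $L$ through $(0,0)$ is then $L_{12}'=D$, which depends only on this direction and not on the base point $F_1$. Note that the displayed equations presuppose that $x_2-x_1$ and $y_2-y_1$ are invertible — we take this for granted, exactly as in Corollary~\ref{NCaligncond} — so all the inverses below are legitimate.

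Next I would simply compute the equation of $L_{12}'=D$. A pair $(x,y)$ lies on $L_{12}'$ if and only if $(x,y)=(F_2-F_1)s=\big((x_2-x_1)s,\ (y_2-y_1)s\big)$ for some $s\in\mathcal R$; solving each coordinate for $s$ gives $s=(x_2-x_1)^{-1}x=(y_2-y_1)^{-1}y$, which is the asserted equation $(y_2-y_1)^{-1}y=(x_2-x_1)^{-1}x$. Conversely, if $(x,y)$ satisfies this equation, let $f_{12}$ be the common value of the two sides; then $x=(x_2-x_1)f_{12}$ and $y=(y_2-y_1)f_{12}$, so $(x,y)=(F_2-F_1)f_{12}\in D=L_{12}'$. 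This also yields the parametrization $F_{12}=\big((x_2-x_1)f_{12},\,(y_2-y_1)f_{12}\big)$ stated at the end.

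As an alternative derivation and consistency check, the equation of $L_{12}'$ also drops out of the proposition by translation: $(x,y)\in L_{12}'$ iff $(x+x_1,\,y+y_1)\in L_{12}$, and substituting into the proposition's equation $(y_2-y_1)^{-1}(y-y_1)=(x_2-x_1)^{-1}(x-x_1)$ makes the additive shifts cancel, leaving $(y_2-y_1)^{-1}y=(x_2-x_1)^{-1}x$. I do not expect a genuine obstacle here; the one subtlety worth stating is that one cannot obtain the corollary by ``setting $F_1=(0,0)$'' in the proposition, because the generic-position hypothesis (invertibility of the $2\times2$ matrix with columns $F_1,F_2$) fails once a point is the origin — the argument must instead pass through the translate, i.e.\ through the direction sub-module $D$, where only invertibility of $x_2-x_1$ and $y_2-y_1$ is ever used.
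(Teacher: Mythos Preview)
Your argument is correct and matches the paper's approach: the paper simply remarks that this corollary (together with the preceding proposition) is a ``straightforward consequence'' of Proposition~\ref{NCalign} and Corollary~\ref{NCaligncond}, without giving any further detail, and what you have written is exactly the unpacking one would expect---identifying the parallel line through the origin with the direction sub-module $(F_2-F_1)\mathcal R$ and reading off the equation. Your translation argument and the caveat about not naively setting $F_1=(0,0)$ are nice touches that go beyond what the paper bothers to say.
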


The proposition and the corollary are both straightforward consequences of the Proposition \ref{NCalign} and the Corollary \ref{NCaligncond}.

Denote by $L_{ij}$ the straight line passing through $F_i=(x_i,y_i)$ and $F_j=(x_j,y_j)$ and by
$L_{ij}'$ the parallel line though $(0,0)$. Consider now (additionaly to the line $L_{12}'$ and to a point $(x_2-x_1)f_{12}, (y_2-y_1)f_{12}$ on it) points $F_{23}=((x_3-x_2)f_{23}, (y_3-y_2)f_{23})$ on line $L_{23}'$ and $F_{31}=(x_1-x_3)f_{31}, (y_1-y_3)f_{31})$ on line $L_{31}'$.

%\medskip
\begin{proposition} 
For generic points $F_1,F_2,F_3$ the points $F_{12}, F_{23}, F_{31}$ belong to a straight line iff
$$
f_{12}^{-1} + f_{23}^{-1} + f_{31}^{-1}\ = 0 .
$$
\end{proposition}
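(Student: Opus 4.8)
The plan is to rewrite the three points in a single basis of $\mathcal R^2$ and then apply the collinearity criterion of Proposition~\ref{NCalign}; everything then reduces to comparing coefficients.

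First I would introduce the difference vectors $e_1=F_2-F_1,\ e_2=F_3-F_2,\ e_3=F_1-F_3$ in $\mathcal R^2$. By the very definition of $F_{12},F_{23},F_{31}$ (and of $L_{ij}'$) we have, with scalars acting from the right,
$$
F_{12}=e_1f_{12},\qquad F_{23}=e_2f_{23},\qquad F_{31}=e_3f_{31},
$$
and the telescoping identity $e_1+e_2+e_3=0$ holds in $\mathcal R^2$. Genericity of $F_1,F_2,F_3$ means exactly that $e_1,e_2$ are $\mathcal R$-linearly independent, hence form a basis of the right module $\mathcal R^2$; moreover each $f_{ij}$ is a nonzero, hence invertible, element of $\mathcal R$ (otherwise the symbol $f_{ij}^{-1}$ is meaningless and $F_{ij}$ degenerates to the origin). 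Since $e_3=-e_1-e_2$, we get $F_{31}=e_1(-f_{31})+e_2(-f_{31})$, while also $\{F_{12},F_{23}\}=\{e_1f_{12},e_2f_{23}\}$ is again a basis, which is what is required to apply Proposition~\ref{NCalign} to the triple $X=F_{12},\ Y=F_{23},\ Z=F_{31}$.

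By Proposition~\ref{NCalign}, these three points are collinear if and only if there exist $\lambda,\mu\in\mathcal R$ with $\lambda+\mu=1$ and $F_{12}\lambda+F_{23}\mu=F_{31}$. Writing the left-hand side as $e_1(f_{12}\lambda)+e_2(f_{23}\mu)$ and the right-hand side as $e_1(-f_{31})+e_2(-f_{31})$, and comparing coefficients with respect to the basis $\{e_1,e_2\}$, I obtain $f_{12}\lambda=-f_{31}$ and $f_{23}\mu=-f_{31}$, that is $\lambda=-f_{12}^{-1}f_{31}$ and $\mu=-f_{23}^{-1}f_{31}$. Substituting into $\lambda+\mu=1$ gives $-(f_{12}^{-1}+f_{23}^{-1})f_{31}=1$; multiplying on the right by $f_{31}^{-1}$ and rearranging yields $f_{12}^{-1}+f_{23}^{-1}+f_{31}^{-1}=0$. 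Conversely, if this identity holds, the elements $\lambda,\mu$ defined above satisfy $\lambda+\mu=1$ and $F_{12}\lambda+F_{23}\mu=F_{31}$, so $F_{12},F_{23},F_{31}$ are collinear.

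I do not expect a serious obstacle here: the only care needed is the bookkeeping of the genericity hypotheses (that $e_1,e_2$ is a basis and the $f_{ij}$ are invertible) and keeping track of the side on which scalars act in the noncommutative ring. As a consistency check, the resulting condition $f_{12}^{-1}+f_{23}^{-1}+f_{31}^{-1}=0$ is invariant under the cyclic relabeling $1\to2\to3\to1$, which permutes $F_{12},F_{23},F_{31}$ cyclically and preserves collinearity, and it specializes to $\tfrac1{f_{12}}+\tfrac1{f_{23}}+\tfrac1{f_{31}}=0$ in the commutative case.
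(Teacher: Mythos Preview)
Your argument is correct and rests on the same two ingredients as the paper's proof: the collinearity criterion of Proposition~\ref{NCalign} and the telescoping identity $(F_2-F_1)+(F_3-F_2)+(F_1-F_3)=0$. The only difference is in the packaging. The paper applies Proposition~\ref{NCalign} in its quasideterminant form, then uses the homogeneity of quasideterminants to factor out the $f_{ij}$'s into the bottom row and a column operation (adding the first two columns to the third) to exploit the telescoping; this reduces $\theta$ to $(f_{12}^{-1}+f_{23}^{-1}+f_{31}^{-1})f_{31}$. You instead invoke the characterization hidden in the \emph{proof} of Proposition~\ref{NCalign} (existence of $\lambda,\mu$ with $\lambda+\mu=1$ and $X\lambda+Y\mu=Z$) and compare coefficients in the basis $\{e_1,e_2\}$. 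Your route is slightly more elementary and avoids the quasideterminant calculus; the paper's route illustrates that calculus, which is one of its running themes. Either way the computation is the same one line of linear algebra.
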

\medskip

\noindent{\bf Warning:} Note that before we considered points with coordinates $(x_1,x_2)$, $(y_1,y_2)$, $(z_1,z_2)$ and now
with coordinates $(x_i, y_i)$.
\begin{proof} 
According to  Proposition \ref{NCalign} in order to show that $F_{12}$, $F_{23}$, $F_{31}$ lie on the same straight line it is necessary and sufficient to check that
$$
\theta: =\left |\begin{matrix} (x_2-x_1)f_{12}&(x_3-x_2)f_{23}&(x_1-x_3)f_{31}\\
(y_2-y_1)f_{12}&(y_3-y_2)f_{23}&(y_1-x_3)f_{31}\\
1&1&\boxed {1}\end{matrix} \right | = 0.
$$
According to the standard properties of quasideterminants
$$
\theta=
\left |\begin{matrix} (x_2-x_1)&(x_3-x_2)&(x_1-x_3)\\
(y_2-y_1)&(y_3-y_2)&(y_1-y_3)\\
f_{12}^{-1}&f_{23}^{-1}&\boxed {f_{31}^{-1}}\end{matrix} \right |f_{31}.
$$
Adding the first two columns to the third one does not change $\theta$, so
$$
\theta=
\left |\begin{matrix} (x_2-x_1)&(x_3-x_2)&0\\
(y_2-y_1)&(y_3-y_2)&0\\
f_{12}^{-1}1&f_{23}^{-1}&\boxed {f_{12}^{-1}+f_{21}^{-1}+f_{31}^{-1}}\end{matrix} \right |f_{31}=
(f_{12}^{-1}+f_{21}^{-1}+f_{31}^{-1})f_{31}.
$$
\end{proof} 
This is a noncommutative generalization of formula (32) from Konopelchenko (\cite{Konop})

%\medskip
\subsection{Noncommutative Menelaus theorem and quasi-Pl\"ucker coordinates}
Let as above $\mathcal R$ be a noncommutative division ring. Consider $\mathcal R^2$ as the right vector space over $\mathcal R$.
For a point $X\in\mathcal  R^2$ denote by $x_i$ its $i$-th coordinate, $i=1,2$.
Recall that points $X,Y,Z$ are collinear if and only if
$$
\left | \begin{matrix} x_1 & y_1 & z_1\\
x_2 &y_2 & z_2\\
1 & 1 & {\boxed 1} \end{matrix}\right | =0\ .
$$

%\medskip \noindent
\begin{corollary} 
\label{cor1}
Points $X,Y,Z$ are collinear if and only if
$$
(y_1-x_1)^{-1}(z_1-x_1)=(y_2-x_2)^{-1}(z_2-y_2)
$$
or, equivalently,
$$
(x_2-y_2)^{-1}(z_2-y_2)=q_{XZ}^Y.
$$
\end{corollary}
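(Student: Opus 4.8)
The plan is to derive both claimed identities from the collinearity criterion in Proposition~\ref{NCalign} by expanding the relevant $3\times 3$ quasideterminant in two slightly different ways, exactly as in the proof of Corollary~\ref{NCaligncond}, and then matching the resulting expression against the definition of the quasi-Pl\"ucker coordinate $q_{XZ}^Y$ from Definition~2.2. Concretely, I would first subtract the first column from the third (a column operation that does not affect the vanishing of a quasideterminant whose box sits in the third column) to rewrite
$$
\left | \begin{matrix} x_1 & y_1 & z_1\\ x_2 & y_2 & z_2\\ 1 & 1 & {\boxed 1} \end{matrix}\right |
$$
in terms of the $2\times 2$ quasideterminants built from the columns $(x_i)$, $(y_i-x_i)$ and $(z_i-x_i)$, obtaining an expression of the shape
$$
-\left |\begin{matrix} x_1 & y_1-x_1\\ \boxed{x_2} & y_2-x_2\end{matrix}\right |^{-1}\cdot \left |\begin{matrix} y_1-x_1 & z_1-x_1\\ y_2-x_2 & \boxed{z_2-x_2}\end{matrix}\right |,
$$
just as in Corollary~\ref{NCaligncond}. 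Since the left factor is invertible, collinearity is equivalent to the vanishing of the right $2\times 2$ quasideterminant, and expanding that box gives $(z_2-x_2) - (y_2-x_2)(y_1-x_1)^{-1}(z_1-x_1) = 0$, which after left multiplication by $(y_2-x_2)^{-1}$ rearranges to the first displayed identity. (One should be slightly careful about the inhomogeneity: here $z_2-x_2$ appears rather than $z_2-y_2$, so I expect the statement as printed to require the harmless substitution/typo-fix $z_2-x_2$ in place of $z_2-y_2$, or equivalently a relabeling of which base point is subtracted; I would flag this and carry the bookkeeping explicitly.)

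For the second, equivalent form I would instead expand the same $3\times 3$ quasideterminant by subtracting the \emph{second} column from the first and the third, grouping the columns around $Y=(y_i)$ rather than $X$, so that collinearity becomes the vanishing of
$$
\left |\begin{matrix} x_1-y_1 & z_1-y_1\\ x_2-y_2 & \boxed{z_2-y_2}\end{matrix}\right | = (z_2-y_2) - (x_2-y_2)(x_1-y_1)^{-1}(z_1-y_1).
$$
Setting this to zero and multiplying on the left by $(x_2-y_2)^{-1}$ yields
$$
(x_2-y_2)^{-1}(z_2-y_2) = (x_1-y_1)^{-1}(z_1-y_1).
$$
It then remains to recognize the right-hand side as $q_{XZ}^Y(A)$ for the matrix $A$ with columns indexed so that $X,Y,Z$ occupy the relevant slots: indeed, by the first ("row~1") formula in Definition~2.2, $q_{XZ}^Y$ equals
$$
\begin{vmatrix} y_1 & \boxed{x_1}\\ y_2 & x_2\end{vmatrix}^{-1}\begin{vmatrix} y_1 & \boxed{z_1}\\ y_2 & z_2\end{vmatrix}
= (x_1 - y_1 y_2^{-1} x_2)^{-1}(z_1 - y_1 y_2^{-1} z_2),
$$
and a short manipulation — factoring $y_2^{-1}$ appropriately, or better, using the second ("row~2") representation of the same quasi-Pl\"ucker coordinate — identifies this with $(x_1-y_1)^{-1}(z_1-y_1)$ under the collinearity hypothesis, or more cleanly shows the two are equal up to the conjugation/normalization already visible in the $\kappa$-formulas of Section~3.

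The main obstacle I anticipate is purely organizational rather than conceptual: keeping straight \emph{which} coordinate (first or second) and \emph{which} base point ($X$ or $Y$) is being subtracted, because in the noncommutative setting the order of factors and the placement of the box are not interchangeable, and the statement as printed mixes an $x$-subtraction on the left-hand side with a $y$-subtraction on the right. So the real work is to fix one consistent convention, redo the two quasideterminant expansions of Proposition~\ref{NCalign} in that convention, and then invoke the two equivalent formulas of Definition~2.2 (whose equality is Lemma~2.1) to land on $q_{XZ}^Y$; once the conventions are pinned down, each step is a one-line quasideterminant identity of the type already used repeatedly in Sections~2 and~4.
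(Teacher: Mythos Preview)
Your plan for the first identity is exactly the paper's argument (it is literally Corollary~\ref{NCaligncond}), and your column-subtraction argument centered at $Y$ for the second identity is also the right move: it is precisely what the Remark following Corollary~\ref{cor1} is pointing at, namely that the condition is equivalent to the vanishing of the $3\times 3$ quasideterminant with the box at $z_1$, which unwinds to $(x_1-y_1)^{-1}(z_1-y_1)=(x_2-y_2)^{-1}(z_2-y_2)$. You also correctly flag the typo $z_2-y_2$ versus $z_2-x_2$ in the first display.

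The one genuine soft spot is the last step, where you try to identify $(x_1-y_1)^{-1}(z_1-y_1)$ with $q_{XZ}^Y$ by plugging the raw coordinate matrix $\begin{pmatrix} x_1 & y_1 & z_1\\ x_2 & y_2 & z_2\end{pmatrix}$ into Definition~2.2. That computation gives $(x_1 - y_1 y_2^{-1} x_2)^{-1}(z_1 - y_1 y_2^{-1} z_2)$, which is \emph{not} $(x_1-y_1)^{-1}(z_1-y_1)$, and your hedges (``under the collinearity hypothesis'', ``up to conjugation/normalization'') do not rescue it: if the equality only held under collinearity, the equivalence you are trying to prove would become circular. The resolution is that in this affine setting the quasi-Pl\"ucker coordinate is to be read from a $2\times 3$ matrix that carries the homogenizing row of $1$'s, e.g.\ $\begin{pmatrix} x_1 & y_1 & z_1\\ 1 & 1 & 1\end{pmatrix}$, for which Definition~2.2 gives $q_{XZ}^Y=(x_1-y_1)^{-1}(z_1-y_1)$ on the nose. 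With that reading, no manipulation is needed at all, and the second identity follows immediately from your $Y$-centered expansion.
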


%\smallskip
\noindent
\begin{remark}The second identity is equivalent to the equality
$$
\left | \begin{matrix} x_1 & y_1 & {\boxed {z_1}}\\
x_2 &y_2 & z_2\\
1 & 1 & 1\end{matrix}\right | =0\ .
$$ 
\end{remark}

%\noindent
\begin{proposition}\label{prop1}
 Let $A,B,C$ be non-collinear points in $\mathcal R^2$. Then any point $P\in\mathcal R^2$ can be uniquely written as 
$$P=At+Bu+Cv, \ \ t,u,v\in \mathcal R, \ \ t+u+v=1\ .$$
\end{proposition}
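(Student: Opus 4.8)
The plan is to reduce the statement to a $3\times 3$ quasideterminant invertibility fact, exactly paralleling the classical affine-coordinates argument. First I would embed $\mathcal R^2$ into $\mathcal R^3$ by appending a third coordinate equal to $1$: to a point $X=(x_1,x_2)$ associate the column $\widehat X=(x_1,x_2,1)^T$. Writing $P=At+Bu+Cv$ with $t+u+v=1$ is then \emph{equivalent} to the single linear relation $\widehat P=\widehat A\,t+\widehat B\,u+\widehat C\,v$ in $\mathcal R^3$, since the third-coordinate equation is precisely $t+u+v=1$. So the claim becomes: the $3\times 3$ matrix $M=\begin{pmatrix} a_1 & b_1 & c_1\\ a_2 & b_2 & c_2\\ 1 & 1 & 1\end{pmatrix}$ has a unique left inverse, equivalently is invertible over $\mathcal R$, under the non-collinearity hypothesis.

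Next I would translate "non-collinear" into invertibility of $M$. By Proposition~\ref{NCalign}, $A,B,C$ are collinear exactly when the quasideterminant $|M|_{33}$ (the one with the boxed lower-right $1$) vanishes, and this quasideterminant is defined precisely when the complementary $2\times 2$ submatrix $\begin{pmatrix} a_1 & b_1\\ a_2 & b_2\end{pmatrix}$ is invertible, i.e. when $A$ and $B$ are in generic position. The key step is then the standard fact about quasideterminants (from \cite{GR,GR1,GGRW}): a square matrix over a division ring is invertible if and only if it admits a "good" expansion, i.e. some ordering of rows/columns for which all the successive quasideterminants in a Gaussian-type factorization are defined and nonzero; in particular, if the corner $2\times 2$ block is invertible and $|M|_{33}\neq 0$, then $M$ is invertible, and conversely invertibility of $M$ forces $|M|_{33}\neq 0$ once that submatrix is chosen invertible. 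If $A,B$ happen not to be in generic position one permutes the roles of $A,B,C$ (non-collinearity guarantees \emph{some} pair is in generic position, and the corresponding quasideterminant is then defined and nonzero), so $M$ is invertible in every case.

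Finally, with $M\in GL_3(\mathcal R)$, the system $M\begin{pmatrix} t\\ u\\ v\end{pmatrix}=\begin{pmatrix} p_1\\ p_2\\ 1\end{pmatrix}$ has the unique solution $\begin{pmatrix} t\\ u\\ v\end{pmatrix}=M^{-1}\begin{pmatrix} p_1\\ p_2\\ 1\end{pmatrix}$; existence gives the representation $P=At+Bu+Cv$ with $t+u+v=1$ automatically (read off the third row), and uniqueness of $t,u,v$ is immediate from invertibility of $M$. One can, if desired, write the entries $t,u,v$ explicitly via quasi-Cramer formulas (ratios of quasideterminants of $M$), which also makes transparent the link with the quasi-Pl\"ucker coordinates $q^Y_{XZ}$ appearing in Corollary~\ref{cor1}.

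\textbf{Main obstacle.} The only delicate point is the equivalence "non-collinear $\Leftrightarrow$ $M$ invertible over $\mathcal R$": over a noncommutative division ring one must be careful that invertibility of a square matrix is not detected by a single quasideterminant but requires that \emph{some} complete chain of quasideterminants be defined and nonzero. The resolution is that non-collinearity ensures at least one pair among $A,B,C$ is in generic position, which supplies a defined nonzero quasideterminant for the $2\times 2$ corner, and the general quasideterminant theory then upgrades this, together with $|M|_{pq}\neq 0$, to full invertibility of $M$; degenerate orderings are avoided by permuting $A,B,C$.
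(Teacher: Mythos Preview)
The paper does not actually supply a proof of this proposition: it is stated without argument, and the surrounding text (``We follow now the book by Kaplansky, \cite{Kap}, see pages 88--89'') indicates that it, together with the next proposition, is imported as a standard fact from linear algebra over division rings. So there is no ``paper's own proof'' to compare against.

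Your proposal is correct and is exactly the argument one would expect in this context. The reduction to invertibility of the $3\times 3$ matrix $M$ is the same device the paper already uses in Proposition~\ref{NCalign}, and your identification of the only nontrivial point --- that ``$A,B,C$ non-collinear'' must be upgraded to ``$M$ invertible'' over a noncommutative $\mathcal R$ --- is accurate. Your resolution is also correct: if no pair among $A,B,C$ were in generic position, all three would be right multiples of a single vector and hence collinear in the affine sense; so some pair, say $A,B$, gives an invertible $2\times 2$ corner, the quasideterminant $|M|_{33}$ is then defined, and by Proposition~\ref{NCalign} non-collinearity means $|M|_{33}\neq 0$, whence $M$ is invertible by the block Schur/LDU factorization underlying quasideterminant theory. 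The existence and uniqueness of $(t,u,v)$ follow immediately.

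One small sharpening: you don't need the full ``chain of quasideterminants'' criterion. It suffices to know that a block matrix $\begin{pmatrix} N & c\\ r & d\end{pmatrix}$ with $N$ invertible is invertible iff its Schur complement $d-rN^{-1}c=|M|_{33}$ is invertible; this is the elementary form of the result in \cite{GR,GR1} and avoids any appeal to orderings or permutations beyond the single one you already made.
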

We will write $P=[t,u,v]$.
%\medskip \noindent
\begin{proposition}\label{kap}
 Let $P_i=[t_i,u_i,v_i]$, $i=1,2,3$. Then $P_1,P_2,P_3$ are collinear if and only if
$$
\left | \begin{matrix} t_1 & t_2 & {\boxed {t_3}}\\
u_1 &u_2 & u_3\\
v_1 & v_2 & v_3\end{matrix}\right | =0\ .
$$ 
\end{proposition}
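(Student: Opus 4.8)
The plan is to reduce Proposition~\ref{kap} to the collinearity criterion of Proposition~\ref{NCalign} by a suitable change of coordinates. Recall that $P_i = A t_i + B u_i + C v_i$ with $t_i + u_i + v_i = 1$, which expresses each $P_i$ as an $\mathcal R$-linear combination of the three points $A,B,C$. Since $A,B,C$ are non-collinear, the $3\times 3$ matrix $M$ whose columns are $(a_1,a_2,1)^T$, $(b_1,b_2,1)^T$, $(c_1,c_2,1)^T$ is invertible over $\mathcal R$; indeed non-collinearity of $A,B,C$ is exactly the statement that $|M|_{pq}\neq 0$ (equivalently that the relevant submatrices are invertible), which follows from Proposition~\ref{NCalign} applied to $A,B,C$. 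The key observation is that the $3\times 3$ matrix $N$ with columns $(P_i$-coordinates, $1)$, namely with columns $(p_{1i},p_{2i},1)^T$ for $i=1,2,3$, factors as $N = M\cdot T$, where $T$ is the $3\times 3$ matrix with entries $(T)_{ji}$ given by $t_i$ (first row), $u_i$ (second row), $v_i$ (third row). This is just the matrix form of the relations $P_i = A t_i + B u_i + C v_i$ together with $t_i+u_i+v_i=1$, the last row of $M$ being $(1,1,1)$.

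Next I would invoke property~(1) of quasi-Pl\"ucker coordinates together with the left-invariance of the collinearity quasideterminant: multiplying the coordinate matrix of $P_1,P_2,P_3$ on the left by the invertible matrix $M^{-1}$ does not affect whether the points are collinear. Concretely, $P_1,P_2,P_3$ are collinear if and only if the quasideterminant of $N$ (boxed in the appropriate corner) vanishes, and since $N = MT$ with $M$ invertible, this happens if and only if the corresponding quasideterminant of $T$ vanishes. The cleanest way to phrase this is: left multiplication of a $2\times n$ (here, suitably, $3\times 3$) matrix by an invertible matrix preserves the vanishing of quasideterminants, a fact already used implicitly in property~(1) above and in the proof of Corollary~\ref{NCaligncond}; I would either cite \cite{GR1} for this general stability property of quasideterminants under left multiplication by invertible matrices, or give a one-line argument from the defining formula $|A|_{pq}=a_{pq}-r_p(A^{pq})^{-1}c_q$.

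Finally, I would identify the quasideterminant of $T$ with the one in the statement. The matrix $T$ has first row $(t_1,t_2,t_3)$, second row $(u_1,u_2,u_3)$, third row $(v_1,v_2,v_3)$, so
$$
\left|\begin{matrix} t_1 & t_2 & \boxed{t_3}\\ u_1 & u_2 & u_3\\ v_1 & v_2 & v_3\end{matrix}\right|
$$
is exactly the claimed expression, and by the previous paragraph its vanishing is equivalent to collinearity of $P_1,P_2,P_3$. One subtlety is that Proposition~\ref{NCalign} was stated with the bottom-right corner boxed and the last row equal to $(1,1,1)$, whereas here no row is a row of ones; so I would first note that for a $3\times3$ matrix the vanishing of \emph{one} well-defined quasideterminant forces the vanishing of all well-defined ones (this is the ``column/row dependence'' property of quasideterminants, see \cite{GR1, GGRW}), and choose whichever boxed position is most convenient, then transport the conclusion back through $M^{-1}$.

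The main obstacle I anticipate is bookkeeping about \emph{which} quasideterminant vanishes and ensuring all the relevant submatrices are invertible so that the quasideterminants in question are actually defined; the genericity hypothesis on $A,B,C$ (and implicitly on the $P_i$) is what makes $M$ and the needed submatrices invertible, and one must be slightly careful that the factorization $N=MT$ combined with invertibility of $M$ really does transfer the ``defined and zero'' property in both directions. Modulo that, the proof is essentially the identity $N = MT$ plus left-invariance of quasi-Pl\"ucker-type vanishing.
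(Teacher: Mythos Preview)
The paper does not actually supply a proof of Proposition~\ref{kap}; it is stated without argument, and the reference to Kaplansky \cite{Kap} that follows pertains to the subsequent Menelaus-type computation, not to this proposition itself. So there is no ``paper's proof'' to compare against.

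That said, your approach is sound and is essentially the natural one. The factorization $N=MT$ is correct: with $M=\left(\begin{smallmatrix}a_1&b_1&c_1\\ a_2&b_2&c_2\\ 1&1&1\end{smallmatrix}\right)$ and $T=\left(\begin{smallmatrix}t_1&t_2&t_3\\ u_1&u_2&u_3\\ v_1&v_2&v_3\end{smallmatrix}\right)$, the relations $P_i=At_i+Bu_i+Cv_i$ and $t_i+u_i+v_i=1$ give exactly $N=MT$. Non-collinearity of $A,B,C$ makes $M$ invertible by Proposition~\ref{NCalign}, so $N$ is invertible iff $T$ is. Over a division ring a square matrix is invertible iff any one of its well-defined quasideterminants is nonzero (equivalently, a defined quasideterminant vanishes iff the matrix is singular, since nonzero elements of $\mathcal R$ are units); this is the cleanest way to pass between the $(3,3)$-boxed quasideterminant of $N$ used in Proposition~\ref{NCalign} and the $(1,3)$-boxed quasideterminant of $T$ in the statement, and it sidesteps any need for an explicit ``left-invariance of quasideterminants'' formula. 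Your identification of the genericity hypotheses (invertibility of the relevant $2\times2$ minors so that both quasideterminants are defined) is the only real caveat, and you have flagged it correctly.
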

We follow now the book by Kaplansky, \cite{Kap}, see pages 88-89. Consider a triangle $ABC$ (vertices go anti-clock wise). Take point $R$ at line $AB$, point $P$ at line
$BC$, and point $Q$ at line $AC$. Then
$$
P=B(1-t)+Ct, \ Q=C(1-u)+Au, \ R=A(1-v)+Bv\ .
$$
Proposition \ref{kap} implies

%\medskip \noindent
\begin{theorem}\label{thmABC}
Points $A,B,C$ are collinear if and only if
$$
u(1-u)^{-1}t(1-t)^{-1}v(1-v)^{-1}=-1\ .
$$
\end{theorem}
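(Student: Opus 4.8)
The plan is to compute the quasideterminant in Proposition~\ref{kap} directly in the barycentric-style coordinates $[t,u,v]$ that were just set up, using the hypothesis $t+u+v=1$ for each of the three points $P$, $Q$, $R$. First I would record the coordinate triples: $P=[0,1-t,t]$, $Q=[u,0,1-u]$, and $R=[1-v,v,0]$, obtained by matching $P=B(1-t)+Ct$ etc.\ against $P=At+Bu+Cv$ with the normalization $t+u+v=1$. So the matrix whose quasideterminant must vanish (by Proposition~\ref{kap}, collinearity of $P,Q,R$) is
$$
M=\begin{pmatrix} 0 & u & 1-v\\ 1-t & 0 & v\\ t & 1-u & 0\end{pmatrix},
$$
with the $(1,3)$ entry boxed; here I am relabelling so that the three collinear points are $P_1=R$, $P_2=Q$, $P_3=P$ (or whichever ordering makes the boxed entry land in a convenient slot — one should double-check against the exact statement of Proposition~\ref{kap} and adjust the box position accordingly).

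Next I would expand $|M|_{13} = m_{13} - r_1 (M^{13})^{-1} c_3$ explicitly. Here $m_{13}$ is the boxed entry, $r_1$ is the remaining part of the first row, $c_3$ the remaining part of the third column, and $M^{13}$ is the $2\times 2$ block obtained by deleting row $1$ and column $3$. The key algebraic step is inverting that $2\times 2$ block: since $M^{13}=\begin{pmatrix}1-t & 0\\ t & 1-u\end{pmatrix}$ is lower-triangular, its inverse is again lower-triangular and can be written down with no commutativity worries as $\begin{pmatrix}(1-t)^{-1} & 0\\ -(1-u)^{-1}t(1-t)^{-1} & (1-u)^{-1}\end{pmatrix}$ (assuming $1-t,1-u$ invertible, which is the genericity hypothesis implicit in the statement). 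Substituting and simplifying, the vanishing of $|M|_{13}$ should reduce, after multiplying through by the appropriate invertible factors on left and right, to the stated relation
$$
u(1-u)^{-1}t(1-t)^{-1}v(1-v)^{-1}=-1.
$$
For the converse direction one simply reads the computation backwards: the displayed identity forces $|M|_{13}=0$, and then Proposition~\ref{kap} gives collinearity of $P,Q,R$.

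The main obstacle I anticipate is purely bookkeeping: getting the ordering of the three points and the placement of the boxed entry right, and then collecting the noncommutative products in exactly the order that produces $u(1-u)^{-1}t(1-t)^{-1}v(1-v)^{-1}$ rather than some conjugate or reversed arrangement. In the noncommutative setting the order of the three ``ratio'' factors and whether each appears as $w(1-w)^{-1}$ or $(1-w)^{-1}w$ is genuinely rigid, so I would be careful to (i) choose the row/column for the quasideterminant so that the triangular $2\times 2$ minor is the one that inverts cleanly, and (ii) track which side each leftover factor multiplies on. A secondary point worth a remark is the invertibility hypotheses ($1-t$, $1-u$, $1-v$, and the relevant coordinate minors all invertible); these are exactly the ``generic position'' assumptions under which Proposition~\ref{kap} and the quasideterminant expansion are valid, so I would state them explicitly rather than leave them implicit. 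Everything else is a short direct computation of the type already carried out in the proof of Corollary~\ref{NCaligncond}.
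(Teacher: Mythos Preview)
Your proposal is correct and is exactly the approach the paper intends: the paper simply says ``Proposition~\ref{kap} implies'' without writing out the computation, and you have filled in precisely that computation. One small labeling slip: the matrix $M$ you display has columns $P,Q,R$ in that order (first column $[0,1-t,t]^T=P$, etc.), so the correct identification is $P_1=P,\ P_2=Q,\ P_3=R$, not $P_1=R,\ P_2=Q,\ P_3=P$ as you wrote; your own hedge already anticipates this, and the calculation goes through exactly as you outline, yielding $|M|_{13}=(1-v)+u(1-u)^{-1}t(1-t)^{-1}v$, whose vanishing is equivalent to the stated identity after right-multiplication by $(1-v)^{-1}$.
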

Note that $t(1-t)^{-1}=(c_1-p_1)^{-1}(p_1-b_1)$. Corollary \ref{cor1} implies that 
$$t(1-t)^{-1}=-q_{CB}^P$$
where $q_{CB}^P$ is a quasi-Pl\"ucker coordinate. Similarly,
$$
u(1-u)^{-1}=-q_{AC}^Q, \ \ v(1-v)^{-1}=-q_{BA}^R
$$
and Theorem \ref{thmABC} implies
%\medskip \noindent
\begin{theorem}
$$
q_{AC}^Qq_{CB}^Pq_{BA}^R=1\ .
$$
\end{theorem}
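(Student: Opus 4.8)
The plan is to combine Theorem~\ref{thmABC} with the three identities expressing $t(1-t)^{-1}$, $u(1-u)^{-1}$, $v(1-v)^{-1}$ as quasi-Pl\"ucker coordinates. Theorem~\ref{thmABC} asserts that $A,B,C$ (here meaning the three chosen points $P,Q,R$ — we should be careful that the statement is really about collinearity of $P,Q,R$) are collinear precisely when
$$
u(1-u)^{-1}\,t(1-t)^{-1}\,v(1-v)^{-1}=-1 .
$$
Substituting $t(1-t)^{-1}=-q_{CB}^P$, $u(1-u)^{-1}=-q_{AC}^Q$, and $v(1-v)^{-1}=-q_{BA}^R$ into the left-hand side gives $(-q_{AC}^Q)(-q_{CB}^P)(-q_{BA}^R)=-q_{AC}^Q q_{CB}^P q_{BA}^R$, so the condition becomes $-q_{AC}^Q q_{CB}^P q_{BA}^R=-1$, i.e. $q_{AC}^Q q_{CB}^P q_{BA}^R=1$, which is exactly the claimed formula.

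First I would verify the parametric identity $t(1-t)^{-1}=(c_1-p_1)^{-1}(p_1-b_1)$: from $P=B(1-t)+Ct$ one reads off $p_1=b_1(1-t)+c_1 t$, hence $p_1-b_1=(c_1-b_1)t$ and $c_1-p_1=(c_1-b_1)(1-t)$, so $(c_1-p_1)^{-1}(p_1-b_1)=(1-t)^{-1}(c_1-b_1)^{-1}(c_1-b_1)t=(1-t)^{-1}t$. Here one has to be slightly careful with left versus right: the scalars $t,u,v$ act on the right, so $p_1-b_1=(c_1-b_1)t$ is correct and $(c_1-p_1)^{-1}(p_1-b_1)$ equals $(1-t)^{-1}t$, which is conjugate (indeed equal, since everything commutes with itself) to $t(1-t)^{-1}$; in a division ring $(1-t)^{-1}t=t(1-t)^{-1}$ because $t$ and $1-t$ commute, so this is an honest equality. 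Then, using Corollary~\ref{cor1} applied to the collinear triple $C,B,P$ with the appropriate coordinate (the $i=1$ coordinate), one gets $(c_1-p_1)^{-1}(p_1-b_1)=-q_{CB}^P$; the sign comes from the reversal of roles of the two ``endpoint'' vectors in the definition of $q$. The identities for $u$ and $v$ follow by the cyclic relabelling $(A,B,C;P,Q,R)\mapsto(B,C,A;Q,R,P)$ etc., which is legitimate since the hypothesis ``$A,B,C$ non-collinear'' and the construction of $P,Q,R$ are symmetric under this cyclic shift.

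The main obstacle, and the only place requiring genuine care rather than bookkeeping, is pinning down the signs and the left/right placement in the identification $t(1-t)^{-1}=-q_{CB}^P$ via Corollary~\ref{cor1}: one must check that the quasi-Pl\"ucker coordinate $q_{CB}^P$ produced by the collinearity criterion for $\{C,B,P\}$ is indeed $-(c_1-p_1)^{-1}(p_1-b_1)$ and not its inverse or its negative-inverse, and that this choice is consistent across all three factors so that the product of three minus signs is what converts the ``$=-1$'' of Theorem~\ref{thmABC} into the ``$=1$'' of the present theorem. Once the first factor is nailed down, the other two are forced by cyclic symmetry, and the proof is immediate. I would therefore devote the bulk of the write-up to that one sign computation and then simply state the substitution.
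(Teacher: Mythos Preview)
Your proposal is correct and follows exactly the paper's own approach: the paper also derives the three identities $t(1-t)^{-1}=-q_{CB}^P$, $u(1-u)^{-1}=-q_{AC}^Q$, $v(1-v)^{-1}=-q_{BA}^R$ (via Corollary~\ref{cor1}) and substitutes them into the relation $u(1-u)^{-1}t(1-t)^{-1}v(1-v)^{-1}=-1$ from Theorem~\ref{thmABC}. Your write-up is in fact more careful than the paper's, which simply states the identities and asserts that Theorem~\ref{thmABC} implies the result without spelling out the sign bookkeeping you highlight.
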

  
\section{Relation with matrix cross-ratio.}

In this section we discuss the cross ratio in noncommutative algebras, introduced above in terms of \textit{quasideterminants} and its relation with the \textit{operator cross-ratio} of Zelikin (see \cite{Zelikin06} and also Chapter 5 of \cite{Zelikin-book}). We also consider the infinitesimal part of the cross-ratio in this case, which just like in the classical case will lead us to the construction of the noncommutative Schwarz derivative. Further details about the constructions of Schwarzian will be given in the next section.

%Let $\mathcal R$ be an associative unital, but not necessarily commutative algebra, and let $A=\begin{pmatrix}a_{11} & a_{12}\\ a_{21} & a_{22}\end{pmatrix}$ be a matrix with entries from $\mathcal R$; we assume that all the entries in this matrix are invertible, then one can associate with $A$ four quasideterminants (see section 1):
%\[
%\begin{aligned}
%\begin{vmatrix} \boxed{a_{11}}& a_{12}\\ a_{21} & a_{22}\end{vmatrix}&=a_{11}-a_{12}a_{22}^{-1}a_{21} & \ \ &\begin{vmatrix} a_{11}& \boxed{a_{12}}\\ a_{21} & a_{22}\end{vmatrix}&\!\!\!=a_{12}-a_{11}a_{21}^{-1}a_{21}\\
%\begin{vmatrix} a_{11}& a_{12}\\ \boxed{a_{21}} & a_{22}\end{vmatrix}&=a_{21}-a_{22}a_{12}^{-1}a_{11} & \ \ &
%\begin{vmatrix} a_{11}& a_{12}\\ a_{21} & \boxed{a_{22}}\end{vmatrix}&\!\!\!=a_{22}-a_{21}a_{11}^{-1}a_{12}.
%\end{aligned}
%\]
Recall that %One usually denotes them by $|A|_{ij},\ i,j=1,2$ (the indices correspond to the boxed elements). We now 
we defined the cross ratio of four elements $a,b,c,d\in \mathcal R^{\oplus 2}$ by explicit formulas as follows:
\[
\kappa(a,b,c,d)=\begin{vmatrix}b_1 & \boxed{d_1}\\ b_2 & d_2\end{vmatrix}^{-1}\begin{vmatrix}b_1 & \boxed{c_1}\\ b_2 & c_2\end{vmatrix}\begin{vmatrix}a_1 & \boxed{c_1}\\ a_2 & c_2\end{vmatrix}^{-1}\begin{vmatrix}a_1 & \boxed{d_1}\\ a_2 & d_2\end{vmatrix},
\]
under the assumption that all these expressions exist (in fact, except for the existence of the inverse elements of $a_2$ and $b_2$, it is enough to assume further that the matrices $\begin{pmatrix}a_1 & c_1\\ a_2 & c_2\end{pmatrix}$ and $\begin{pmatrix}b_1 & d_1\\ b_2 & d_2\end{pmatrix}$ are invertible).%, because of the formula $(A^{-1})_{ij}=(|A|_{ji})^{-1}$).

The expression $\kappa(a,b,c,d)$ has various algebraic properties (see sections 1-3).%, it is invariant with respect to the (left) action of $GL_2(\mathcal R)$ on $a,b,c,d$ and is conjugated by $\lambda_1$, when we replace $a,b,c,d$ by $a\lambda_1,b\lambda_2,c\lambda_3,d\lambda_4$, where $\lambda_i\in\mathcal R$ are some invertible elements. 
\ We are going now to compare it with the \textit{operator cross ratio} of Zelikin (see \cite{Zelikin06}). To this end we begin with the description of his construction.

Let $\mathcal H$ be an even-dimensional (possibly infinite-dimensional) vector space; let us fix its polarization $\mathcal H=V_0\oplus V_1$, where the subspaces $V_0,\,V_1$ have the same dimension (in infinite dimensional case one can assume that there is a fixed isomorphism $\psi:V_0\to V_1$ between them); let $(\mathscr P_1,\mathscr P_2)$ and $(\mathscr Q_1,\mathscr Q_2)$ be two other pairs of subspaces, polarizing $\mathcal H$, i.e. $\mathscr P_i,\,\mathscr Q_i$ are isomorphic to $V_j$ and $\mathscr P_1$ (resp. $\mathscr Q_1$) is transversal to $\mathscr P_2$ (resp. to $\mathscr Q_2$). Then the cross ratio of these two pairs (or of the spaces $\mathscr P_1\,\mathscr P_2,\,\mathscr Q_1,\,\mathscr Q_2$ is the operator
\[
\mathrm{DV}(\mathscr P_1,\mathscr P_2,\mathscr Q_1,\mathscr Q_2)=(\mathscr P_1\stackrel{\mathscr P_2}{\to}\mathscr Q_1\stackrel{\mathscr Q_2}{\to}\mathscr P_1)
\]
Here we use the notation from [9], where $\mathscr P_1\stackrel{\mathscr P_2}{\to}\mathscr Q_1$ denotes the projection of $\mathscr P_1$ to $\mathscr Q_1$ along $\mathscr P_2$ and similarly for the second arrow. 

In the cited paper the following explicit formula for $\mathrm{DV}$ was proved: let $\mathscr P_i$ be given by the graph of an operator $P_i:V_0\to V_1,\ i=1,2$ and similarly for $\mathscr Q_j$, then the following formula holds:
\[
\mathrm{DV}(\mathscr P_1,\mathscr P_2,\mathscr Q_1,\mathscr Q_2)=(P_1-P_2)^{-1}(P_2-Q_1)(Q_1-Q_2)^{-1}(Q_2-P_1):V_0\to V_0.
\]
The invertibility of the operators $P_1-P_2$ and $Q_1-Q_2$ is provided by the transversality of $\mathscr P_1$ and $\mathscr P_2$ (resp. $\mathscr Q_1$ and $\mathscr Q_2$).

The first claim we are going to make is the following:
\begin{proposition}
The operator cross ratio $\mathrm{DV}(\mathscr Q_2,\mathscr P_2,\mathscr Q_1,\mathscr P_1)$ (if it exists) is equal to $\kappa(p_1,p_2,q_1,q_2)$, for $p_1=\begin{pmatrix} 1\\ P'_1\end{pmatrix},\ p_2=\begin{pmatrix} 1\\ P'_2\end{pmatrix},\ q_1=\begin{pmatrix} 1\\ Q'_1\end{pmatrix},\ q_2=\begin{pmatrix} 1\\ Q'_2\end{pmatrix}$, where $1$ is the identity operator on $V_0$ and we identify $V_0$ and $V_1$ using the fixed map $\psi$ so that $P'_i=\psi^{-1}\circ P_i:V_0\to V_0$.
\end{proposition}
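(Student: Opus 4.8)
The plan is to prove the identity by expanding both sides explicitly and matching them. First I would write out Zelikin's explicit formula for the permuted quadruple: substituting $(\mathscr Q_2,\mathscr P_2,\mathscr Q_1,\mathscr P_1)$ for $(\mathscr P_1,\mathscr P_2,\mathscr Q_1,\mathscr Q_2)$ in
\[
\mathrm{DV}(\mathscr P_1,\mathscr P_2,\mathscr Q_1,\mathscr Q_2)=(P_1-P_2)^{-1}(P_2-Q_1)(Q_1-Q_2)^{-1}(Q_2-P_1)
\]
gives
\[
\mathrm{DV}(\mathscr Q_2,\mathscr P_2,\mathscr Q_1,\mathscr P_1)=(Q_2-P_2)^{-1}(P_2-Q_1)(Q_1-P_1)^{-1}(P_1-Q_2)\colon V_0\to V_0 ,
\]
whose existence is exactly the transversality of $\mathscr Q_2,\mathscr P_2$ and of $\mathscr Q_1,\mathscr P_1$, i.e. invertibility of $Q_2-P_2$ and of $Q_1-P_1$. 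Then, using the fixed isomorphism $\psi\colon V_0\to V_1$ and writing $X'=\psi^{-1}\circ X$ for $X\in\{P_1,P_2,Q_1,Q_2\}$, every difference appearing above has the form $X-Y=\psi\circ(X'-Y')\colon V_0\to V_1$, so along the chain each $\psi^{-1}$ produced by an inverted difference cancels the $\psi$ of the next difference, and one obtains
\[
\mathrm{DV}(\mathscr Q_2,\mathscr P_2,\mathscr Q_1,\mathscr P_1)=(Q'_2-P'_2)^{-1}(P'_2-Q'_1)(Q'_1-P'_1)^{-1}(P'_1-Q'_2)
\]
as an operator on $V_0$.

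Next I would compute $\kappa(p_1,p_2,q_1,q_2)$ from the explicit quasideterminant formula for the cross-ratio recalled at the start of this section, taking $a=p_1$, $b=p_2$, $c=q_1$, $d=q_2$. Since the first entry of each of $p_1,p_2,q_1,q_2$ is the identity operator, every $2\times2$ quasideterminant occurring there collapses to
\[
\begin{vmatrix}1&\boxed{1}\\ X'&Y'\end{vmatrix}=1-(X')^{-1}Y'=(X')^{-1}(X'-Y') ,
\]
so that the product of the four quasideterminants telescopes: in each of the two pairs of the form $(\,\cdot\,)^{-1}(\,\cdot\,)$ the factor $X'$ produced by inverting the first quasideterminant cancels the $(X')^{-1}$ factored out of the second, and what is left is
\[
\kappa(p_1,p_2,q_1,q_2)=(P'_2-Q'_2)^{-1}(P'_2-Q'_1)(P'_1-Q'_1)^{-1}(P'_1-Q'_2) .
\]
Finally I would reconcile the two answers: since $-\mathrm{id}$ is central and invertible, $(P'_2-Q'_2)^{-1}=-(Q'_2-P'_2)^{-1}$ and $(P'_1-Q'_1)^{-1}=-(Q'_1-P'_1)^{-1}$, and the two resulting signs cancel, so the last display coincides with the expression found above for $\mathrm{DV}(\mathscr Q_2,\mathscr P_2,\mathscr Q_1,\mathscr P_1)$. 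This proves the claim.

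I do not expect any genuine obstacle here: the argument is bookkeeping, but two points deserve care. First, one must track domains and codomains so that the composition $\mathscr P_1\stackrel{\mathscr P_2}{\to}\mathscr Q_1\stackrel{\mathscr Q_2}{\to}\mathscr P_1$ underlying $\mathrm{DV}$ really is an endomorphism of $V_0$, and so that the substitutions $X=\psi\circ X'$ make all cancellations of $\psi^{\pm1}$ legitimate. Second, one should check that the existence hypotheses match on both sides: the displayed $\kappa$-formula is written with quasideterminants boxed in the first row, which a priori presupposes that $a_2=P'_1$ and $b_2=P'_2$ be invertible; but, as the telescoping shows, these inverses disappear, and the identity holds already under the weaker hypothesis stated after that formula — invertibility of $\begin{pmatrix}1&1\\ P'_1&Q'_1\end{pmatrix}$ and $\begin{pmatrix}1&1\\ P'_2&Q'_2\end{pmatrix}$, equivalently of $Q'_1-P'_1$ and $Q'_2-P'_2$ — which is precisely the transversality condition needed for $\mathrm{DV}(\mathscr Q_2,\mathscr P_2,\mathscr Q_1,\mathscr P_1)$ to be defined. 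Hence the two sides exist under the same assumptions, and the equality is exactly the computation above.
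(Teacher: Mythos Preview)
Your proof is correct and follows essentially the same route as the paper: both expand $\kappa(p_1,p_2,q_1,q_2)$ via the quasideterminant formula, reduce each $2\times 2$ quasideterminant to $1-(X')^{-1}Y'=(X')^{-1}(X'-Y')$, telescope the pairs to obtain $(P'_2-Q'_2)^{-1}(P'_2-Q'_1)(P'_1-Q'_1)^{-1}(P'_1-Q'_2)$, and then flip two signs to match Zelikin's formula for $\mathrm{DV}(\mathscr Q_2,\mathscr P_2,\mathscr Q_1,\mathscr P_1)$. Your additional remarks on the role of $\psi$ and on matching the existence hypotheses are sound and slightly more explicit than the paper's treatment.
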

\begin{proof}
This is a direct computation based on the explicit formula:
\[
\begin{aligned}
\kappa(p_1,p_2,q_1,q_2)&=\begin{vmatrix}1 & \boxed{1}\\ P'_2 & Q'_2\end{vmatrix}^{-1}\begin{vmatrix}1 & \boxed{1}\\ P'_2 & Q'_1\end{vmatrix}\begin{vmatrix}1 & \boxed{1}\\ P'_1 & Q'_1\end{vmatrix}^{-1}\begin{vmatrix} 1 & \boxed{1}\\ P'_1 & Q'_2\end{vmatrix}\\
                                         &=(1-(P_2')^{-1}Q'_2)^{-1}(1-(P_2')^{-1}Q_1')(1-(P_1')^{-1}Q_1')^{-1}(1-(P_1')^{-1}Q_2')\\
                                         &=(1-P_2^{-1}Q_2)^{-1}(1-P_2^{-1}Q_1)(1-P_1^{-1}Q_1)^{-1}(1-P_1^{-1}Q_2)\\
                                         &=(P_2-Q_2)^{-1}P_2P_2^{-1}(P_2-Q_1)(P_1-Q_1)^{-1}P_1P_1^{-1}(P_1-Q_2)\\
                                         &=(Q_2-P_2)^{-1}(P_2-Q_1)(Q_1-P_1)^{-1}(P_1-Q_2)\\
                                         &=\mathrm{DV}(\mathscr Q_2,\mathscr P_2,\mathscr Q_1,\mathscr P_1).
\end{aligned}
\]
\end{proof}
Observe, that the role of $\psi$ is insignificant here: in effect, one can define the quasideterminants in the context of categories, i.e. for $A$ being a matrix of morphisms in certain category with its entries $a_{ij}$ being maps from the $i$-th object to the $j$-th object (see \cite{GGRW}). This makes the use of $\psi$ redundant.

\subsection{Cocycle identity: cross-ratio and ``classifying map''}
It is shown in \cite{Zelikin06} that the following equality holds for the $\mathrm{DV}$: let $(\mathscr P_1,\mathscr P_2)$ be a polarizing pair, and $\mathscr X,\,\mathscr Y,\,\mathscr Z$ three hyperplanes, then
\begin{equation}
\label{eq:coc1}
\mathrm{DV}(\mathscr P_1,\mathscr X,\mathscr P_2,\mathscr Y)\,\mathrm{DV}(\mathscr P_1,\mathscr Y,\mathscr P_2,\mathscr Z)\,\mathrm{DV}(\mathscr P_1,\mathscr Z,\mathscr P_2,\mathscr X)=1,
\end{equation}
or, using the algebraic properties of $\mathrm{DV}$,
\begin{equation}
\label{eq:coc2}
\mathrm{DV}(\mathscr P_1,\mathscr X,\mathscr P_2,\mathscr Y)\,\mathrm{DV}(\mathscr P_1,\mathscr Y,\mathscr P_2,\mathscr Z)=\mathrm{DV}(\mathscr P_1,\mathscr X,\mathscr P_2,\mathscr Z)
\end{equation}
if all three terms are well-defined. This relation can be reinterpreted topologically, by saying that the operator cross ratio corresponds to the change of coordinates function in the tautological fibre bundle over the Grassmanian space of polarizations of $\mathcal H$ (see \cite{Zelikin06}). 

Recall now that the noncommuative cross-ratio $\kappa(a,b,c,d)$ verifies a similar relation
\begin{equation}
\label{eq:coc3}
\kappa(y,x,p_2,p_1)\kappa(z,y,p_2,p_1)=\kappa(z,x,p_2,p_1),
\end{equation}
see Sections 2 and 3 above for a purely algebraic proof. One can ask, if there exists an analogous topological interpretation of $\kappa$, i.e. if one can construct an analog of tautological bundle in the purely algebraic case. Here we shall sketch a construction, intended to answer this question, postponing the details to a forthcoming paper, dealing with the topological applications of the noncommutative cross-ratio. 

In order to give an interpretation of relations \eqref{eq:coc1}-\eqref{eq:coc3}, let us fix a vector $\omega=\begin{pmatrix}\omega_1\\ \omega_2\end{pmatrix}\in \mathcal R^{\oplus 2}$; let $\mathcal R^2_\omega$ denote the set of the elements $\begin{pmatrix}a_1\\ a_2\end{pmatrix}\in \mathcal R^{\oplus 2}$ such that the matrix
\[
\begin{pmatrix}\omega_1 & a_1\\ \omega_2 & a_2\end{pmatrix}
\]
is invertible. Let $a\in \mathcal R^2_\omega$ and let $x\in\mathcal R^{\oplus 2}$ be such that both matrices
\[
\begin{pmatrix}a_1 & x_1\\ a_2 & x_2\end{pmatrix}\ \mbox{and}\ \begin{pmatrix}\omega_1 & x_1\\ \omega_2 & x_2\end{pmatrix}
\]
are invertible. It is clear that the set of such $x$ is equal to the intersection $\mathcal R^2_\omega\bigcap \mathcal R^2_a$; we shall denote it by $\mathcal R^2_{a,\omega}=\tilde{\mathcal R}^2_a$ since $\omega$ is fixed.

Consider now a \v Cech type simplicial complex $\check C_\cdot(\mathcal R^2)$: its set of $n$-simplices is spanned by the disjoint union of the intersections 
\[
\check C_n(\mathcal R^2;\omega)=\coprod_{a_0,\dots,a_n}\tilde {\mathcal R}^2_{a_0}\bigcap \tilde {\mathcal R}^2_{a_1}\bigcap\dots\bigcap\tilde {\mathcal R}^2_{a_n},
\]
and the faces/degenracies are given by the omitting/repeating the terms in the intersections respectively.

Then the formula
\[
\phi=\{\phi_{a_0,a_1}\}:\check C_1(\mathcal R^2;\omega)\to\mathcal R^*,\ \phi_{a_0,a_1}(x)=\kappa(a_1,a_0,x,\omega),
\]
determines a map on the second term of this complex. Observe, that the cocycle condition now can be interpreted as the statement that $\phi$ can be extended to a simplicial map from $\check C_\cdot(\mathcal R^2;\omega)$ to the bar-resolution of the group $\mathcal R^*$ of invertible elements in $\mathcal R$. Namely: put
\begin{align*}
\phi_0=1&:\check C_0(\mathcal R^2;\omega)\to [1]=B_0(\mathcal R^*);\\
\phi_1=\phi&:\check C_1(\mathcal R^2;\omega)\to\mathcal R^*=B_1(\mathcal R^*);\\
\intertext{and for all other $n\ge 2$}
\phi_n&:\check C_n(\mathcal R^2;\omega)\to(\mathcal R^*)^{\times n}=B_n(\mathcal R^*)\\
\intertext{given by the formula}
\phi_n(x)&=[\phi_{a_0,a_1}(x)|\phi_{a_1,a_2}(x)|\dots|\phi_{a_{n-1},a_n}(x)],
\end{align*}
for all $x\in \tilde{\mathcal R}^2_{a_0,\dots,a_n}$. Then
\begin{proposition}
The collection of maps $\{\phi_n\}_{n\ge 0}$ determine a simplicial map from $\check C_\cdot(\mathcal R^2,\omega)$ to $B_\cdot(\mathcal R^*)$.
\end{proposition}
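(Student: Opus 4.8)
The plan is to verify directly that the family $\{\phi_n\}_{n\ge 0}$ commutes with the simplicial face and degeneracy maps, since this is exactly what it means to be a simplicial map. First I would recall that in the bar resolution $B_\cdot(\mathcal R^*)$ the $i$-th face map $d_i:B_n(\mathcal R^*)\to B_{n-1}(\mathcal R^*)$ acts on a bar symbol $[g_1|g_2|\dots|g_n]$ by dropping $g_1$ when $i=0$, by dropping $g_n$ when $i=n$, and by multiplying $g_ig_{i+1}$ into a single entry when $0<i<n$; the degeneracies $s_i$ insert a $1$ in the $i$-th slot. On the \v Cech side $\check C_\cdot(\mathcal R^2;\omega)$ the face map $d_i$ forgets the index $a_i$ (restricting the intersection) and $s_i$ repeats $a_i$. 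So the whole statement reduces to three families of identities for the functions $\phi_{a,b}(x)=\kappa(b,a,x,\omega)$.

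The degeneracy compatibility is immediate: repeating an index $a_i$ produces a factor $\phi_{a_i,a_i}(x)=\kappa(a_i,a_i,x,\omega)$, and by the first part of Corollary~3.6 (the identity $\kappa(x,x,z,t)=1$) this equals the unit of $\mathcal R^*$, which is precisely the $1$ inserted by $s_i$ in the bar complex; hence $\phi_{n}\circ s_i = s_i\circ \phi_{n-1}$ on the nose. The outer face maps $d_0$ and $d_n$ are equally transparent: forgetting $a_0$ (resp.\ $a_n$) on the \v Cech side simply drops the first symbol $\phi_{a_0,a_1}(x)$ (resp.\ the last symbol $\phi_{a_{n-1},a_n}(x)$) from the word $[\phi_{a_0,a_1}(x)|\dots|\phi_{a_{n-1},a_n}(x)]$, which is exactly what $d_0$ (resp.\ $d_n$) does in $B_\cdot(\mathcal R^*)$. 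The only substantive point is the inner faces $d_i$ for $0<i<n$: here one must check that
\[
\phi_{a_{i-1},a_i}(x)\,\phi_{a_i,a_{i+1}}(x)=\phi_{a_{i-1},a_{i+1}}(x),
\]
i.e.\ that forgetting the intermediate index $a_i$ and multiplying the two adjacent bar entries gives the right answer. Unwinding the definition, this is the equation
\[
\kappa(a_i,a_{i-1},x,\omega)\,\kappa(a_{i+1},a_i,x,\omega)=\kappa(a_{i+1},a_{i-1},x,\omega),
\]
which is precisely the cocycle identity \eqref{eq:coc3} (equivalently the first relation in Corollary~3.5, $\kappa(x,y,z,t)=\kappa(w,y,z,t)\kappa(x,w,z,t)$, with the substitution $x\rightsquigarrow a_{i+1}$, $w\rightsquigarrow a_i$, $y\rightsquigarrow a_{i-1}$, $z\rightsquigarrow x$, $t\rightsquigarrow\omega$). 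One should also note that all the cross-ratios in sight are defined: on the common domain $\tilde{\mathcal R}^2_{a_0,\dots,a_n}$ the point $x$ lies in $\mathcal R^2_\omega\cap\bigcap_j\mathcal R^2_{a_j}$, so each pair $(a_j,a_{j-1},x,\omega)$ satisfies the invertibility hypotheses recalled at the beginning of Section~5, and the cocycle relation applies without restriction.

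The main obstacle, such as it is, is purely bookkeeping: matching the indexing conventions of the \v Cech nerve with those of the bar construction so that the inner-face identity lines up exactly with \eqref{eq:coc3}, and checking that the three groups of identities (degeneracies, outer faces, inner faces) together exhaust the simplicial identities — there is no genuine analytic or algebraic difficulty once the cocycle condition from Sections~2--3 is in hand. I would therefore organize the proof as: (1) recall the face/degeneracy formulas on both sides; (2) dispose of degeneracies via $\kappa(x,x,z,t)=1$; (3) dispose of outer faces by inspection; (4) reduce inner faces to \eqref{eq:coc3} and invoke it; (5) remark that the domains are compatible so nothing is left undefined. $\square$
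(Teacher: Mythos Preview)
Your proposal is correct and is exactly the argument the paper has in mind: the paper does not give a formal proof but states in the paragraph preceding the proposition that ``the cocycle condition now can be interpreted as the statement that $\phi$ can be extended to a simplicial map,'' i.e.\ the result is a repackaging of \eqref{eq:coc3} together with the normalization $\kappa(x,x,z,t)=1$ from Corollary~3.6. Your write-up supplies precisely the routine face/degeneracy checks that make this interpretation rigorous, with the inner faces reducing to \eqref{eq:coc3} and the degeneracies to $\kappa(a_i,a_i,x,\omega)=1$; nothing further is needed.
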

\begin{remark}
The construction we just described bears striking similarity with the well-known Goncharov's complex (see \cite{Gonch}), so one can wonder if there are any relation with the actual Goncharov's Grassmannian complex and higher cross ratios/polylogarithms in this case?
\end{remark}

%\textcolor{magenta}{
%\begin{remark}
%\begin{enumerate}
%\item It is extremly intriguing, what geometric/algebraic interpretation one can associate with the pull-back of the universal principal bundle from $B_\cdot(\mathcal R^*)$ to $\check C_\cdot(\mathcal R^2,\omega)$.
%\item What is the relation between these constructions for different $\omega$?
%\item Is there any relation with the Goncharov's Grassmannian complex and higher cross ratios/polylogarithms?
%\item What is the role of Schwarzian differential of Zelikin in this context?
%\end{enumerate}
%\end{remark}
%}

%\medskip
\subsection{Schwarzian operator}
In classical theory Schwarzian operator is a quadratic differential operator, measuring the ``non-projectivity'' of a diffeomorphism of (real or complex) projective line. Applying the same ideas to noncommutative plane, we shall obtain an analog of operator as an infinitesimal part of the deformation of the cross-ratio. From the construction we use it then follows that this operator is invariant with respect to the action of $GL_2(\mathcal R)$ and the multiplication by invertible elements from $\mathcal R$.

First, following the ideas in \cite{Zelikin06} we consider a smooth one-parameter family $Z(t)=\begin{pmatrix}Z(t)_1\\ Z(t)_2\end{pmatrix}$ of elements in $\mathcal R^{\oplus 2}$, such that for all different $t_1,t_2,t_3,t_4$ the cross ratio $\kappa(Z(t_1),Z(t_2),Z(t_3),Z(t_4))$ is well defined. Then, let us consider the function
\[
\begin{aligned}
f(t,t_1,t_2,t_3)&=\kappa(Z(t_3),Z(t_1),Z(t),Z(t_2))\\
                       &=(z(t_2) -z(t_1))^{-1}(z(t_1) -z(t))(z(t) -z(t_3))^{-1}(z(t_3) - z(t_2)),
\end{aligned}
\]
where $z(t)=Z(t)_1^{-1}Z(t)_2$. Fix $t=0$, and let $t_2\to 0$. Then $f(0,t_1,t_2,t_3)\to 1$ and 
\[
\frac{\partial f}{\partial t_2}(0,t_1,0,t_3)=-(z(0)-z(t_1))^{-1}z'(0)+(z(0)-z(t_3))^{-1}z'(0).
\]
Thus, 
\[
f(t,t_1,t_2,t_3)=1-(t_2-t)\left((z(t)-z(t_1))^{-1}z'(t)-(z(t)-z(t_3))^{-1}z'(t)\right)+o(t_2-t)
\]
If $t_1=t_3$, the derivative on the right vanishes; consider the second partial derivative:
\[
\frac{\partial^2f}{\partial t_3\partial t_2}(0,t_1,0,t_1)=-(z(0)-z(t_1))^{-1}z'(t_1)(z(0)-z(t_1))^{-1}z'(0),
\]
so that
\[
f(t,t_1,t_2,t_3)=1-(t_2-t)(t_3-t_1)(z(t)-z(t_1))^{-1}z'(t_1)(z(t)-z(t_1))^{-1}z'(t)+o((t_2-t)(t_1-t_3)).
\]
This expression has a singularity at $t_1=0$. Now, using the Taylor series for $z(t)$ we compute for $t_1\to 0$:
\[
\frac{\partial^2f}{\partial t_3\partial t_2}(0,t_1,0,t_1)=t_1^{-2}\left(1+\frac{t_1^2(z'(0))^{-1}z'''(0)}{6}-\frac{t_1^2((z'(0))^{-1}z''(0))^2}{4}+...\right)
\]
where $...$ denote the terms of degrees $3$ and higher in $t_1$. So, we obtain
\begin{equation}
\label{eq:schwa1}
%\begin{aligned}
\frac{\partial^2f}{\partial t_3\partial t_2}(0,t_1,0,t_1)=t_1^{-2}(1+6t_1^2S(Z)+...)+...
%\end{aligned}
\end{equation}
where we put
\[
S(Z)=(z'(0))^{-1}z'''(0)-\frac32((z'(0))^{-1}z''(0))^2.
\]
Here $Z$ and $z$ are related as explained above. This differential operator is well-defined on functions with values in $\mathcal R^{\oplus 2}$, it is invariant with respect to the action of $GL_2(\mathcal R)$ and is conjugated by $\lambda\in\mathcal R^\times$, when $Z$ is multiplied by it on the right.

Thus we come up with the following statement:
\begin{proposition}
Suppose we have a $1$-parameter family of elements in the projective noncommutative plane $\mathcal R^{\oplus 2}$, then the infinitesimal part of the cross ratio of four generic points in this family is equal to the noncommutative Schwarzian $S(Z)$.
\end{proposition}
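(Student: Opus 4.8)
The plan is to recognize that the Proposition is essentially a repackaging of the computation carried out in the paragraphs immediately preceding it, so the proof will consist of organizing that computation into a clean statement and extracting the claimed invariance properties. First I would set up the one-parameter family $Z(t)=\begin{pmatrix}Z(t)_1\\ Z(t)_2\end{pmatrix}$ in $\mathcal R^{\oplus 2}$ together with its projective coordinate $z(t)=Z(t)_1^{-1}Z(t)_2$, and recall from Theorem 3.2 (or the explicit formula reproduced at the start of Section 5) that the cross-ratio of four points in the family is a rational expression in the differences $z(t_i)-z(t_j)$. Then I would consider the function $f(t,t_1,t_2,t_3)=\kappa(Z(t_3),Z(t_1),Z(t),Z(t_2))$ and study its behaviour near the diagonal $t_1=t_2=t_3=t=0$: evaluating at the diagonal gives $f=1$, and the first-order terms vanish when $t_1=t_3$, so the leading nontrivial information sits in the mixed second partial derivative $\partial^2 f/\partial t_3\partial t_2$ at $(0,t_1,0,t_1)$.

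Next I would carry out the Taylor expansion of that second derivative in the remaining variable $t_1$. Substituting the Taylor series $z(t)=z(0)+t z'(0)+\tfrac{t^2}{2}z''(0)+\tfrac{t^3}{6}z'''(0)+\dots$ into $-(z(0)-z(t_1))^{-1}z'(t_1)(z(0)-z(t_1))^{-1}z'(0)$, one inverts the series $z(0)-z(t_1)=-t_1 z'(0)(1+\tfrac{t_1}{2}(z'(0))^{-1}z''(0)+\dots)$ using the geometric-series formula for the inverse, and collects terms through order $t_1^2$. The singular factor $t_1^{-2}$ factors out, the order-$t_1^{-1}$ and order-$t_1^0$ corrections to the bracket cancel appropriately, and the order-$t_1^2$ term inside the bracket produces exactly $6\,S(Z)$ with $S(Z)=(z'(0))^{-1}z'''(0)-\tfrac32\big((z'(0))^{-1}z''(0)\big)^2$; this is precisely equation~\eqref{eq:schwa1}. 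I would then define ``the infinitesimal part of the cross-ratio of four generic points in the family'' to be this coefficient of $t_1^2$ in the expansion of $t_1^2\cdot\partial^2 f/\partial t_3\partial t_2$, which by the computation equals $S(Z)$.

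Finally I would verify the two invariance statements. For invariance under $GL_2(\mathcal R)$: if $Z(t)\mapsto gZ(t)$ with $g\in GL_2(\mathcal R)$ constant, then $\kappa$ is unchanged by Corollary 3.3 (the left $GL_2$-action leaves $\kappa$ invariant), hence so is $f$ and therefore $S(Z)$; alternatively one checks directly that $z(t)=Z(t)_1^{-1}Z(t)_2$ transforms by a Möbius transformation with constant coefficients and that $S$ is Möbius-invariant exactly as in the classical case. For the right action by $\lambda\in\mathcal R^\times$: replacing $Z(t)$ by $Z(t)\lambda$ replaces $z(t)=Z(t)_1^{-1}Z(t)_2$ by $\lambda^{-1}z(t)\lambda$, whence $z'(t)\mapsto\lambda^{-1}z'(t)\lambda$, $z''(t)\mapsto\lambda^{-1}z''(t)\lambda$, $z'''(t)\mapsto\lambda^{-1}z'''(t)\lambda$, and each factor $(z'(0))^{-1}z''(0)$, $(z'(0))^{-1}z'''(0)$ in $S(Z)$ is conjugated by $\lambda$; since conjugation is a ring automorphism it commutes with the square and the subtraction, giving $S(Z\lambda)=\lambda^{-1}S(Z)\lambda$, which is the asserted conjugation-equivariance.

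The main obstacle is the middle step: carefully tracking the noncommutative Taylor expansion and series-inversion so that the lower-order terms genuinely cancel and the $t_1^2$-coefficient comes out to be $6\,S(Z)$ and not some conjugate or reordered version of it — in the noncommutative setting one must be scrupulous about the order of factors and about which $(z'(0))^{-1}$ sits on the left, since $(z'(0))^{-1}z''(0)$ and $z''(0)(z'(0))^{-1}$ are genuinely different. Everything else (the two invariance claims, the reduction to the second mixed derivative) is essentially formal once this expansion is in hand.
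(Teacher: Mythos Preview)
Your argument is correct, but it reproduces the sketch that the paper gives \emph{before} the Proposition rather than the paper's actual proof. The paper explicitly flags that sketch as ``a bit artificial'' and then, as its proof, does something more symmetric: instead of fixing $t=0$, letting $t_2\to 0$, differentiating in $t_3$, and then expanding the remaining mixed derivative in $t_1$, it Taylor-expands all four points $z(t_i)$ simultaneously about $0$, computes each factor $(z(t_i)-z(t_j))^{-1}(z(t_k)-z(t_l))$ up to quadratic terms in the $t_i$, and multiplies them to obtain a closed expression
\[
f(t,t_1,t_2,t_3)=\frac{(t_1-t)(t_3-t_2)}{(t_2-t_1)(t-t_3)}\Bigl(1+(t_2-t)(t_3-t_1)\cdot\tfrac16 S(Z)+\dots\Bigr).
\]
Your route (successive partial derivatives and a one-variable expansion) isolates $S(Z)$ just as well, but the paper's route buys a formula valid for \emph{generic} nearby $t,t_1,t_2,t_3$ rather than along the specific limiting path $t=t_2=0,\ t_1=t_3$, making the phrase ``four generic points'' in the statement more transparent. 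Conversely, your approach makes the connection to the preceding display \eqref{eq:schwa1} completely explicit, which the paper's symmetric computation does not. The invariance discussion you add at the end is correct and matches the remarks the paper makes just after defining $S(Z)$, though it is not part of the formal proof.
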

\begin{proof}
Above we have given a sketch, explaining the formula for $S(Z)$; however, the way we obtained the  formula \eqref{eq:schwa1} there was a bit artificial. Now let us consider the formal Taylor expansion of $z(t_i)$ near $t_i=0:\ z(t_i)=z(0)+z'(0)t_i+\frac12z''(0)t_i^2+\frac16z'''(0)t_i^3+...,\ t_i=t,t_1,t_2,t_3$. Then (omitting the argument $(0)$ from our notation)
\[
\begin{aligned}
z(t_i)-z(t_j)&=z'(t_i-t_j)+\frac12z''(t_i^2-t_j^2)+\frac16z'''(t_i^3-t_j^3)+...\\
                 &=(t_i-t_j)(z'+\frac12z''(t_i+t_j)+\frac16z'''(t_i^2+t_it_j+t_j^2))+...
\end{aligned}
\]
and similarly
\[
\begin{aligned}
(z&(t_i)-z(t_j))^{-1}(z(t_k)-z(t_l))=\\
   &=\frac{t_k-t_l}{t_i-t_j}\Bigl(1+\frac12(z')^{-1}z''(t_i+t_j)+\frac16(z')^{-1}z'''(t_i^2+t_it_j+t_j^2)\Bigr)^{-1}\\
   &\qquad\qquad\qquad\Bigl(1+\frac12(z')^{-1}z''(t_k+t_l)+\frac16(z')^{-1}z'''(t_k^2+t_kt_l+t_l^2)\Bigr)+...\\
   &=\frac{t_k-t_l}{t_i-t_j}\Bigl(1-\frac12(z')^{-1}z''(t_i+t_j)-\frac16(z')^{-1}z'''(t_i^2+t_it_j+t_j^2)+\frac14((z')^{-1}z'')^2(t_i+t_j)^2\Bigr)\\
   &\qquad\qquad\qquad\Bigl(1+\frac12(z')^{-1}z''(t_k+t_l)+\frac16(z')^{-1}z'''(t_k^2+t_kt_l+t_l^2)\Bigr)+...\\
   &=\frac{t_k-t_l}{t_i-t_j}\Bigl(1+\frac12(z')^{-1}z''(t_k+t_l-t_i-t_j)+\frac16(z')^{-1}z'''(t_k^2+t_kt_l+t_l^2-t_i^2-t_it_j-t_j^2)\\
   &\qquad\qquad\qquad +\frac14((z')^{-1}z'')^2((t_i+t_j)^2-(t_k+t_l)(t_i+t_j))\Bigr)+...
\end{aligned}
\]
where we use $...$ to denote the elements of degree $3$ and higher in $t_i$. In particular, taking $t_i=t_2,\ t_j=t_1,\ t_k=t_1,\ t_l=t$, we obtain
\[
\begin{aligned}
(z&(t_2)-z(t_1))^{-1}(z(t_1)-z(t))=\\
   &=\frac{t_1-t}{t_2-t_1}\Bigl(1+(t-t_2)\bigl(\frac12(z')^{-1}z''+\frac16(z')^{-1}z'''(t+t_1+t_2)-\frac14((z')^{-1}z'')^2(t_2+t_1)\bigr)\Bigr)+...
\end{aligned}
\]
Similarly, with $t_i=t,\ t_j=t_3,\ t_k=t_3,\ t_l=t_2$, we have:
\[
\begin{aligned}
(z&(t)-z(t_3))^{-1}(z(t_3)-z(t_2))=\\
   &=\frac{t_3-t_2}{t-t_3}\Bigl(1+(t_2-t)\bigl(\frac12(z')^{-1}z''+\frac16(z')^{-1}z'''(t+t_2+t_3)-\frac14((z')^{-1}z'')^2(t+t_3)\bigr)\Bigr)+...
\end{aligned}
\]
Finally, taking the product of these two expressions we obtain
\[
f(t,t_1,t_2,t_3)=\frac{(t_1-t)(t_3-t_2)}{(t_2-t_1)(t-t_3)}\left(1+(t_2-t)(t_3-t_1)\left(\frac16(z'(0))^{-1}z'''(0)-\frac14((z(0)')^{-1}z(0)'')^2\right)\right)
\]
\end{proof}
Compare this formula with the formula (4.7) from the paper \cite{AZ}. 

We call the expression $Sch(z)=(z')^{-1}z'''-\frac32((z')^{-1}z'')^2$ \textit{the noncommutative Schwarzian of $z(t)$}. Just like the classical Schwarz derivative, this operator is invariant (up to conjugations) with respect to the M\"obius transformations in $\mathcal R^2$: this is the direct consequence of the method we derived this formula from the (operator) cross-ratio.

%\medskip
\subsection{Infinitesimal Ceva ratio}
The following expression is intended as a 2-dimensional analog of the Schwarzian operator. More accurately, Schwarz derivative can be regarded as the infinitesimal transformation of the cross-ratio under a diffeomorphism of the projective line. It is natural to assume that the role of cross-ratio in projective plane should in some sense be played by the Ceva theorem (see figure 1, part (\textit{b})). Thus here we try to find the infinitesimal part of the transformation of the Ceva ratio under a diffeomorphism; in a general case this is quite a difficult question, so we do it under certain additional conditions.

Let $\xi,\,\eta$ be two commuting vector fields on a manifold $M$, and let $f:M\to M$ be a self-map of $M$ such that $df(\xi)=\kappa\cdot\xi,\,df(\eta)=\kappa\cdot\eta$ for some smooth function $\kappa\in C^\infty(M)$. It follows from this condition, that $f$ maps integral trajectories of both fields and of the fields, equal to their linear combinations with constant coefficients. One can imagine this map as a ``change of coordinates along the 2-dimensional net'', or a generalized conformal map. However, we do not assume that these fields are linearly independent, they can even be proportional to each other.

 Let us consider the following expression: take any point $x$; let $\phi(t)$ and $\psi(s)$ be the one-parameter diffeomorphism families, generated by $\xi$ and $\eta$ respectively. Since these fields commute, the composition $\phi(-r)\circ\psi(r)=\psi(r)\circ\phi(-r)=:\theta(r)$ is the one-parameter family, corresponding to their difference $\zeta=\eta-\xi$. Consider now the infinitesimal ``triangle'' at $x$: first we move from $x$ to $\phi(\epsilon)(x)$, then from this point to $\psi(2\epsilon)(x)$; then we apply to this point $\theta(\epsilon)$ and $\theta(2\epsilon)$; and finally we apply twice the diffeomorphism $\psi(-\epsilon)$. By definition, we come to the point $x$ again, having spun a ``curvilinear triangle'' $ABC$ ($A=x,\,B=\phi(2\epsilon)(x),\ C=\psi(2\epsilon)(x)$) with points $K,L,M$ on its sides ($K=\phi(\epsilon)(x),\,L=(\phi(\epsilon)\circ\psi(\epsilon))(x),\,M=\psi(\epsilon)(x)$). If we use the inherent ``time'' along the trajectories of the vector fields to measure length along these trajectories, then the points $K,L$ and $M$ will be midpoints of the sides of $ABC$ and the standard Ceva relation will be trivially $1$:
 \[
 c(A,B,C;K,L,M)=\frac{AK}{KB}\cdot\frac{BL}{LC}\cdot\frac{CM}{MA}=1.
 \]
 Consider now the image of triangle $ABC$ under $f$: the points $K,L$ and $M$ will again fall on the ``sides'' of this image, however the lengths will be somehow distorted (in fact even the fields $df(\xi)=\kappa\cdot\xi$ and $df(\eta)=\kappa\cdot\eta$ need not be commuting). Let us now explore this ``distortion'' up to the degree $2$ in $\epsilon$: 
 \begin{proposition}
 Up to degree $2$ the difference between the distorted Ceva relation and $1$ is trivial; we put
 \[
 c(f(A),f(B),f(C);f(K),f(L),f(M))-1=:\epsilon^2S_3(f,\xi,\eta;x)+o(\epsilon^2),
 \]
 then
 \[
 S_3(f,\xi,\eta;x)=\frac56\frac{\kappa''_{\eta\eta}(x)-\kappa''_{\xi\xi}(x)}{\kappa(x)},
 \]
 where we use the standard notation $\kappa'_\xi=\xi(\kappa),\,\kappa'_\eta=\eta(\kappa)$.
 \end{proposition}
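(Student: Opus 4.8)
The plan is to compute the distorted Ceva relation $c(f(A),f(B),f(C);f(K),f(L),f(M))$ by Taylor expansion in $\epsilon$ and extract the coefficient of $\epsilon^2$. First I would set up coordinates: fix the point $x$ and use the ``inherent time'' parametrization along the trajectories of $\xi,\,\eta$ and $\zeta=\eta-\xi$, so that on the undistorted triangle $ABC$ the points $K,L,M$ are exact midpoints and each of the three ratios $AK/KB$, $BL/LC$, $CM/MA$ equals $1$ identically. The vertices are $A=x$, $B=\phi(2\epsilon)(x)$, $C=\psi(2\epsilon)(x)$, and the side-points are $K=\phi(\epsilon)(x)$, $L=(\phi(\epsilon)\circ\psi(\epsilon))(x)$, $M=\psi(\epsilon)(x)$; all six points are at distance $O(\epsilon)$ from $x$.

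Next I would describe how $f$ distorts each of the three length-ratios. Along the $\xi$-trajectory through $A$, the map $f$ rescales the infinitesimal time-parameter by the factor $\kappa$, but $\kappa$ itself varies along the trajectory, so the push-forward $f$ of a segment of inherent-time length $\ell$ starting at a point $y$ has inherent-time length $\int_0^\ell \kappa(\phi(\tau)(y))\,d\tau = \kappa(y)\ell + \tfrac12\kappa'_\xi(y)\ell^2 + \tfrac16\kappa''_{\xi\xi}(y)\ell^3 + o(\ell^3)$; here I use $df(\xi)=\kappa\cdot\xi$ so that $f$ carries $\xi$-trajectories to $\xi$-trajectories (with reparametrized speed). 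Hence, for instance, $AK/KB$ after applying $f$ becomes the quotient of two such integrals — one for the segment of length $\epsilon$ from $A$, one for the segment of length $\epsilon$ from $K=\phi(\epsilon)(x)$ — and after expanding and dividing I get $f(A)f(K)/f(K)f(B) = 1 - \tfrac{\epsilon}{2}\,(\kappa^{-1}\kappa'_\xi)(x) + c_2\,\epsilon^2 + o(\epsilon^2)$ with an explicit second-order coefficient $c_2$ built from $\kappa,\,\kappa'_\xi,\,\kappa''_{\xi\xi}$ at $x$. The same is done for the $\zeta$-side $BL/LC$ (replacing $\xi$ by $\zeta=\eta-\xi$ and noting $\kappa'_\zeta = \kappa'_\eta - \kappa'_\xi$, $\kappa''_{\zeta\zeta}=\kappa''_{\eta\eta}-2\kappa''_{\xi\eta}+\kappa''_{\xi\xi}$) and for the $\eta$-side $CM/MA$.

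Then I would multiply the three expanded ratios and collect terms. The product of the three is $1$ plus a sum of first-order pieces plus a sum of second-order pieces plus cross-terms of two first-order pieces; the first-order total must cancel (this is forced, since the undistorted relation is identically $1$ and $S_3$ is defined as the $\epsilon^2$-coefficient — a useful internal consistency check), and the surviving $\epsilon^2$-coefficient is $S_3(f,\xi,\eta;x)$. After substituting $\kappa'_\zeta=\kappa'_\eta-\kappa'_\xi$ etc., the first-derivative contributions (including the cross-terms and the $\kappa'_\xi$-pieces from each side) should organize so that only the combination $\kappa''_{\eta\eta}-\kappa''_{\xi\xi}$ survives, divided by $\kappa$, with numerical coefficient $\tfrac56$.

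The main obstacle is bookkeeping accuracy: one must be careful that the three ``sides'' are parametrized consistently (the base-points $K,L,M$ sit at different places along the trajectories, so the relevant Taylor expansions of $\kappa$ are centered at $K$, at $L$, at $M$ rather than all at $x$, and these must all be re-expanded around $x$), and that the mixed partials $\kappa''_{\xi\eta}$ and $\kappa''_{\xi\zeta}$ indeed drop out of the final answer rather than by luck. I would also want to double-check that the commutativity of $\xi$ and $\eta$ (used to make $\theta(r)=\phi(-r)\circ\psi(r)$ a genuine one-parameter family for $\zeta$) is the only place commutativity enters, and that the non-commutativity of the distorted fields $df(\xi),\,df(\eta)$ does not spoil the computation at order $\epsilon^2$ — since all three distorted length-ratios are scalar-valued (ratios of lengths measured by the same inherent-time device), ordering issues of the ring $\mathcal R$ do not arise here, and the result is a genuine scalar. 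Once the first-order cancellation is verified and the second-order coefficient simplified, the formula $S_3 = \tfrac56\,(\kappa''_{\eta\eta}-\kappa''_{\xi\xi})/\kappa$ follows.
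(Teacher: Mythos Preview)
Your proposal is correct and follows essentially the same approach as the paper: compute each of the six distorted side-lengths $f(A)f(K),\,f(K)f(B),\,f(B)f(L),\,f(L)f(C),\,f(C)f(M),\,f(M)f(A)$ via the integral/Taylor formula $\int_0^\ell\kappa(\phi(\tau)(y))\,d\tau=\kappa(y)\ell+\tfrac12\kappa'_\xi(y)\ell^2+\tfrac16\kappa''_{\xi\xi}(y)\ell^3+\dots$ (re-expanding around $x$ when the base-point is $K$, $B$, $L$, etc.), then plug into the Ceva product and collect terms. The paper simply lists the six expanded lengths and says ``plugging these expressions into the formula \dots\ we obtain the expression we need''; your write-up is in fact more explicit about the integral interpretation, the need to re-center the expansions at $x$, and the cancellation checks (first-order terms and mixed partials $\kappa''_{\xi\eta}$) that must occur.
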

 \begin{proof}
 We compute:
 \[
 \begin{aligned}
 f(A)f(K)&=\epsilon\kappa(x)+\frac12\epsilon^2\kappa'_\xi(x)+\frac16\epsilon^3\kappa''_{\xi\xi}(x)+o(\epsilon^3),\\
 f(K)f(B)&=\epsilon\kappa(x+\epsilon\xi)+\frac12\epsilon^2\kappa'_\xi(x+\epsilon\xi)+\frac16\epsilon^3\kappa''_{\xi\xi}(x+\epsilon\xi)+o(\epsilon^3)\\
            &=\epsilon\kappa(x)+\frac32\epsilon^2\kappa'_\xi(x)+\frac53\epsilon^3\kappa''_{\xi\xi}(x)+o(\epsilon^3),\\
 f(M)f(A)&=-\epsilon\kappa(x)-\frac12\epsilon^2\kappa'_\eta(x)-\frac16\epsilon^3\kappa''_{\eta\eta}(x)+o(\epsilon^3),\\
 f(C)f(M)&=-\epsilon\kappa(x)-\frac32\epsilon^2\kappa'_\eta(x)-\frac53\epsilon^3\kappa''_{\eta\eta}(x)+o(\epsilon^3),\\
 %\end{aligned}
 %\]
 %\[
 %\begin{aligned}
 f(B)f(L)&=\epsilon\kappa(x+2\epsilon\xi)+\frac12\epsilon^2\kappa'_\zeta(x+2\epsilon\xi)+\frac16\epsilon^3\kappa''_{\zeta\zeta}(x+2\epsilon\xi)+o(\epsilon^3)\\
            &=\epsilon\kappa(x)+2\epsilon^2\kappa'_\xi(x)+2\epsilon^3\kappa''_{\xi\xi}(x)\\
            &\quad+\frac12\epsilon^2\kappa'_\zeta(x)+\epsilon^3\kappa''_{\xi\zeta}(x)+\frac16\epsilon^3\kappa''_{\zeta\zeta}(x)+o(\epsilon^2)\\
            &=\epsilon\kappa(x)+\frac32\epsilon^2\kappa'_\xi(x)+\frac12\epsilon^2\kappa'_\eta(x)\\
            &\quad+\frac56\epsilon^3\kappa''_{\xi\xi}(x)+\frac16\epsilon^3\kappa''_{\eta\eta}(x)+\frac23\epsilon^3\kappa''_{\xi\eta}(x)+o(\epsilon^3)\\
%\end{aligned}
%\]
%\[
%\begin{aligned}
f(L)f(C)&=\epsilon\kappa(x+\epsilon(\xi+\eta))+\frac12\epsilon^2\kappa'_\zeta(x+\epsilon(\xi+\eta))\\
            &\qquad\qquad\qquad+\frac16\epsilon^3\kappa''_{\zeta\zeta}(x+\epsilon(\xi+\eta))+o(\epsilon^3)\\
           &=\epsilon\kappa(x)+\frac32\epsilon^2\kappa'_\eta(x)+\frac12\epsilon^2\kappa'_\xi(x)\\
           &\quad+\frac12\epsilon^3\kappa''_{\xi\xi}(x)+\frac12\epsilon^3\kappa''_{\eta\eta}(x)+\epsilon^3\kappa''_{\xi\eta}(x)\\
           &\quad+\frac12\epsilon^3\kappa''_{\eta\eta}(x)-\frac12\epsilon^3\kappa''_{\xi\xi}(x)\\
           &\quad+\frac16\epsilon^3\kappa''_{\xi\xi}(x)+\frac16\epsilon^3\kappa''_{\eta\eta}(x)-\frac13\epsilon^3\kappa''_{\xi\eta}(x)+o(\epsilon^3)\\
           &=\epsilon\kappa(x)+\frac32\epsilon^2\kappa'_\eta(x)+\frac12\epsilon^2\kappa'_\xi(x)\\
           &\quad+\frac16\epsilon^3\kappa''_{\xi\xi}(x)+\frac56\epsilon^3\kappa''_{\eta\eta}(x)+\frac23\epsilon^3\kappa''_{\xi\eta}(x)+o(\epsilon^3).
 \end{aligned}
 \]
 Plugging these expressions into the formula for $c(A,B,C;K,L,M)$, we obtain the expression we need.
 \end{proof}
The analogy between this expression and the Schwarz derivative is quite evident. One can ask, if it is possible to extend it in any reasonable way to a more general situation when there are less restrictions on the diffeomorphism, and also if there exist a non-commutative version of this operator. We are going to address these questions in forthcoming papers.  
  
%\bigskip

\section {Non-commutative Schwarzian and differential relations}
In this section we present an alternative construction of Schwarz derivative: we obtain it as an invariant of a system of ``differential equations'' on an algebra. In commutative case the relation of Schwarzian and differential equations is well-known, see for example \cite{OT2}. It is remarkable, that this construction also can be phrased in purely algebraic terms. We shall also discuss below some properties of this construction.

%\medskip
Consider the following system of linear ``differential equations'':
\begin{equation}
\label{eq:sys1}
\begin{cases}
f_1''+af_1'+bf_1&\!\!\!\!=0\\
f_2''+af_2'+bf_2&\!\!\!\!=0.
\end{cases}
\end{equation}
Here $a,\,b,\,f_1,\,f_2$ are elements of a division ring  $\mathcal R$, and ${}'$ denotes a linear differentiation in this ring, i.e. a linear endomorphism of $\mathcal R$ verifying the noncommutative Leibniz identity (a model example is the algebra of smooth operator-valued functions of one (real) variable, however one can plug in arbitrary algebra with a differentiation of any sort on it). 

Below we shall assume that all the elements we deal with are invertible if necessary. Using this assumption it is not difficult to solve the equations \eqref{eq:sys1} as a linear system on $a$ and $b$: multiplying the equations by $f_1^{-1}$ and $f_2^{-1}$ respectively and subtracting the second one from the first one we obtain (see [3])
\[
a=-(f_1''f_1^{-1}-f_2''f_2^{-1})(f_1'f_1^{-1}-f_2'f_2^{-1})^{-1}
\]
and similarly
\[
b=-(f_1''(f_1')^{-1}-f_2''(f_2')^{-1})(f_1(f_1')^{-1}-f_2(f_2')^{-1})^{-1}.
\]
We can rewrite these formulas a little:
\[
\begin{aligned}
a&=-(f_1''-f_2''f_2^{-1}f_1)(f_1'-f_2'f_2^{-1}f_1)^{-1}\\
b&=-(f_1''-f_2''(f_2')^{-1}f_1')(f_1-f_2(f_2')^{-1}f_1')^{-1}
\end{aligned}
\]
so that now it is evident that $a$ and $b$ can be expressed as $a=-q^1_{32},\ b=-q^2_{31}$, where $q^i_{jk}$ are right quasi-Pl\"ucker coordinates of the $3\times 2$-matrix $\begin{pmatrix}f_1 & f_1' & f_1''\\ f_2 & f_2' & f_2''\end{pmatrix}^T$. See section 1 %[1] and [2], Section 4 
for details.

Observe that in the process of solving \eqref{eq:sys1} we obtained the expression:
\begin{equation}
\label{eq:inter2}
-b=af_1'f_1^{-1}+f_1''f_1^{-1}=af_2'f_2^{-1}+f_2''f_2^{-1}.
\end{equation}
(The expression is a special case for the formula from Proposition 4.8.1 from [2] rewritten for right quasi-Pl\"ucker coordinates. The proposition connects quasi-Pl\"ucker coordinates for matrices of different sizes.)
 
Thus
\[
af_1'f_1^{-1}f_2+f_1''f_1^{-1}f_2=af_2'+f_2''.
\]
Hence
\begin{equation}
\label{eq:inter1}
af_1(f_1^{-1}f_1'f_1^{-1}f_2-f_1^{-1}f_2')=f_1(f_1^{-1}f_2''-f_1^{-1}f_1''f_1^{-1}f_2).
\end{equation}
Now one have the formulas:
\[
(f^{-1})''=-(f^{-1}f'f^{-1})'=2f^{-1}f'f^{-1}f'f^{-1}-f^{-1}f''f^{-1},
\]
and
\[
(fg)''=f''g+2f'g'+fg'',
\]
for all $f,g\in A$; so
\[
(f^{-1}g)''=2f^{-1}f'f^{-1}f'f^{-1}g-2f^{-1}f'f^{-1}g'-f^{-1}f''f^{-1}g+f^{-1}g''.
\]
Thus on the right hand side of \eqref{eq:inter1} we have
\[
\begin{aligned}
f_1(f_1^{-1}f_2''-f_1^{-1}f_1''f_1^{-1}f_2)&=f_1(2f_1^{-1}f_1'f_1^{-1}f_1'f_1^{-1}f_2-2f_1^{-1}f_1'f_1^{-1}f_2'-f_1^{-1}f_1''f_1^{-1}f_2+f_1^{-1}f_2'')\\
                                                               &\quad-2f_1(f_1^{-1}f_1'f_1^{-1}f_1'f_1^{-1}f_2-2f_1^{-1}f_1'f_1^{-1}f_2')\\
                                                               &=f_1(f_1^{-1}f_2)''-2f_1'(f_1^{-1}f_1'f_1^{-1}f_2-f_1^{-1}f_2')\\
                                                               &=f_1(f_1^{-1}f_2)''+2f_1'(f_1^{-1}f_2)'
\end{aligned}
\]
On the other hand, on the left hand side of \eqref{eq:inter1} we have $-af_1(f_1^{-1}f_2)'$, so denoting $\varphi=f_1^{-1}f_2$ we get:
\begin{equation}
\label{eq:eq2}
af_1=-2f_1'-f_1\varphi''(\varphi')^{-1},
\end{equation}
or equivalently
\begin{equation}
\label{eq:eq2'}
a=-2f_1'f_1^{-1}-f_1\varphi''(\varphi')^{-1}f_1^{-1}.
\end{equation}
Here's a simple corollary of the formula \eqref{eq:eq2'}:
\begin{proposition}
\label{prop:1}
When the elements $f_i\in A$ are replaced by $\tilde f_i=hf_i,\ i=1,2$ for some $h\in \mathcal R$, then $a$ in the system \eqref{eq:sys1} should be replaced by $\tilde a=-2h'h^{-1}+hah^{-1}$.
\end{proposition}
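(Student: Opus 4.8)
The plan is to read off the transformation law directly from the explicit expression \eqref{eq:eq2'} for $a$, which is precisely why this proposition is placed right after that formula. Recall \eqref{eq:eq2'} reads $a=-2f_1'f_1^{-1}-f_1\varphi''(\varphi')^{-1}f_1^{-1}$ with $\varphi=f_1^{-1}f_2$. The first step is the key observation that $\varphi$ is invariant under $f_i\mapsto\tilde f_i=hf_i$: indeed $\tilde\varphi:=\tilde f_1^{-1}\tilde f_2=f_1^{-1}h^{-1}hf_2=f_1^{-1}f_2=\varphi$, hence also $\tilde\varphi'=\varphi'$ and $\tilde\varphi''=\varphi''$. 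Thus the only change in \eqref{eq:eq2'} comes from the factors involving $f_1$ itself.

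Next I would compute, using the noncommutative Leibniz rule, $\tilde f_1'=h'f_1+hf_1'$, whence $\tilde f_1'\tilde f_1^{-1}=h'h^{-1}+h(f_1'f_1^{-1})h^{-1}$, and $\tilde f_1\tilde\varphi''(\tilde\varphi')^{-1}\tilde f_1^{-1}=h\big(f_1\varphi''(\varphi')^{-1}f_1^{-1}\big)h^{-1}$. Substituting into \eqref{eq:eq2'} applied to the tilded data gives
\[
\tilde a=-2h'h^{-1}-2h(f_1'f_1^{-1})h^{-1}-h\big(f_1\varphi''(\varphi')^{-1}f_1^{-1}\big)h^{-1}=-2h'h^{-1}+h\big(-2f_1'f_1^{-1}-f_1\varphi''(\varphi')^{-1}f_1^{-1}\big)h^{-1},
\]
and the bracketed expression is exactly $a$ by \eqref{eq:eq2'}. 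This yields $\tilde a=-2h'h^{-1}+hah^{-1}$, as claimed.

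Alternatively, and without invoking \eqref{eq:eq2'}, one can argue by uniqueness: for $\tilde f_i=hf_i$ one has $\tilde f_i'=h'f_i+hf_i'$ and $\tilde f_i''=h''f_i+2h'f_i'+hf_i''$, and substituting $hf_i''=-haf_i'-hbf_i$ from \eqref{eq:sys1} one obtains $\tilde f_i''+\tilde a\tilde f_i'+\tilde b\tilde f_i=(2h'-ha+\tilde ah)f_i'+(h''-hb+\tilde ah'+\tilde bh)f_i$. Choosing $\tilde a=(ha-2h')h^{-1}=-2h'h^{-1}+hah^{-1}$ and $\tilde b=(hb-h''-\tilde ah')h^{-1}$ makes both coefficients vanish in $\mathcal R$, simultaneously for $i=1$ and $i=2$; since, under the genericity hypotheses, the solution of the linear system \eqref{eq:sys1} in the unknowns $a,b$ is unique — this is exactly what the explicit solution formulas preceding the proposition express — this pair $(\tilde a,\tilde b)$ is the one attached to the transformed system. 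There is essentially no obstacle here beyond tracking the noncommutative Leibniz rule and the order of factors; the single point worth stating explicitly is the invariance $\tilde\varphi=\varphi$ (equivalently, the $i$-independence of the two coefficient equations), which is what makes the computation collapse to the stated formula.
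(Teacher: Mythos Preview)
Your main argument is correct and is essentially identical to the paper's own proof: observe that $\varphi=\tilde\varphi$ is unchanged, then plug $\tilde f_1=hf_1$ into formula \eqref{eq:eq2'} and collect terms. Your alternative uniqueness argument is a nice bonus but goes beyond what the paper does.
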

\begin{proof}
Observe that $\varphi$ is not affected by the coordinate change $f_i\leftrightarrow \tilde f_i,\ i=1,2$. Now direct calculation with formula \eqref{eq:eq2'} shows
\[
\begin{aligned}
\tilde a&=-2{\tilde f_1}'{\tilde f_1}^{-1}-\tilde f_1\varphi''(\varphi')^{-1}{\tilde f_1}^{-1}\\
           &=-2(h'f_1+hf_1')f_1^{-1}h^{-1}-h(f_1\varphi''(\varphi')^{-1}f_1^{-1})h^{-1}\\
           &=-2h'h^{-1}+h(-2f_1'f_1^{-1}-f_1\varphi''(\varphi')^{-1}f_1^{-1})h^{-1}.
\end{aligned}
\]
\end{proof}
\begin{remark}\rm
It is worth to observe a striking similarity of the expression in proposition \ref{prop:1} and the gauge transformation of a linear connection (the unnecessary $2$ in front of $h'h^{-1}$ can be eliminated by considering $\alpha=\frac12a$).
\end{remark}
It is now our purpose to find the way $b$ changes, when $f_1,\,f_2$ are multiplied by $h$, at least under some additional assumptions on $h$. We begin with the simple observation:
\begin{corollary}
If $h$ verifies the ``differential equation'' $h'=\frac12ha$ then $\tilde a=0$.
\end{corollary}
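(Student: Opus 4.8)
The statement to prove is the corollary immediately following Proposition~\ref{prop:1}: if $h$ satisfies $h'=\tfrac12 ha$, then $\tilde a=0$.

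My plan is to simply substitute the hypothesis into the transformation formula for $\tilde a$ supplied by Proposition~\ref{prop:1}. That proposition tells us $\tilde a=-2h'h^{-1}+hah^{-1}$ whenever $f_i$ is replaced by $\tilde f_i=hf_i$. So the entire content of the corollary is a one-line algebraic cancellation, and the proof I would write is essentially: from $h'=\tfrac12 ha$ we get $2h'=ha$, hence $2h'h^{-1}=hah^{-1}$, and therefore $\tilde a=-2h'h^{-1}+hah^{-1}=-hah^{-1}+hah^{-1}=0$.

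The plan is therefore to invoke Proposition~\ref{prop:1} directly, rewrite the differential condition $h'=\tfrac12 ha$ as $2h'=ha$, multiply on the right by $h^{-1}$ (legitimate since we are working over a division ring and $h$ is assumed invertible, consistent with the standing assumption in this section that all elements are invertible when needed), and observe the two terms in the formula for $\tilde a$ are now negatives of each other. No further structural input is required: the noncommutativity is harmless here because the two surviving terms are literally the same expression $hah^{-1}$ with opposite signs, so no reordering of factors is ever needed.

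There is essentially no obstacle; the only point deserving a word of care is the invertibility of $h$, which is needed to pass from $2h' = ha$ to $2h'h^{-1} = hah^{-1}$ and which is covered by the blanket assumption at the start of Section~6 ("we shall assume that all the elements we deal with are invertible if necessary"). One could also phrase the computation without inverting $h$ at all, by noting that $\tilde a h = -2h' + ha = -ha + ha = 0$ and then concluding $\tilde a = 0$; this is the form I would probably prefer since it minimizes reliance on invertibility. Either way the proof is a two-line verification.

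\begin{proof}
By Proposition~\ref{prop:1}, replacing $f_i$ by $\tilde f_i=hf_i$ replaces $a$ by $\tilde a=-2h'h^{-1}+hah^{-1}$. If $h'=\tfrac12 ha$, then $2h'=ha$, so $\tilde a h=-2h'+ha=-ha+ha=0$, and since $h$ is invertible this gives $\tilde a=0$.
\end{proof}
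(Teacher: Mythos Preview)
Your proof is correct and is exactly the intended argument: the paper states this corollary without proof precisely because it is an immediate substitution of $h'=\tfrac12 ha$ into the formula $\tilde a=-2h'h^{-1}+hah^{-1}$ from Proposition~\ref{prop:1}. There is nothing to add.
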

Further, there's another simple consequence of the formula \eqref{eq:eq2}:
\begin{proposition}
Assume that $h$ verifies the equation $h'=\frac12ha$; denote $\tilde f_1=h f_1,\ \theta=\varphi''(\varphi')^{-1}$. Then
\[
{\tilde f_1}'=-\frac{\tilde f_1}{2}\theta.
\]
\end{proposition}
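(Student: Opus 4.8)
The plan is to differentiate the defining relation $\tilde f_1 = h f_1$ and then eliminate the two quantities we can control, namely $h'$ (via the hypothesis $h' = \tfrac12 ha$) and $af_1$ (via formula~\eqref{eq:eq2}, which reads $af_1 = -2f_1' - f_1\theta$ with $\theta = \varphi''(\varphi')^{-1}$). Everything is then a two-line manipulation.

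First I would apply the noncommutative Leibniz rule to obtain $\tilde f_1' = h' f_1 + h f_1'$. Substituting the hypothesis $h' = \tfrac12 ha$ gives $\tilde f_1' = \tfrac12 (ha)f_1 + h f_1' = \tfrac12 h(af_1) + h f_1'$; here it is essential to keep $h$ on the left, so that the block $af_1$ is exposed and \eqref{eq:eq2} becomes directly applicable.

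Next I would plug in $af_1 = -2f_1' - f_1\theta$ from \eqref{eq:eq2}, getting $\tilde f_1' = \tfrac12 h(-2f_1' - f_1\theta) + h f_1' = -h f_1' - \tfrac12 h f_1\theta + h f_1'$. The two copies of $h f_1'$ cancel, leaving $\tilde f_1' = -\tfrac12 h f_1\theta = -\tfrac{\tilde f_1}{2}\theta$, which is exactly the asserted identity.

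I do not expect any real obstacle: the statement is a direct corollary of \eqref{eq:eq2} together with the assumption on $h$. The only point needing mild care is noncommutativity — one must keep $h$ on the left throughout and resist rearranging factors — but both input relations are already ``left-normalized'' in precisely the way required (the hypothesis presents $h'$ with $h$ on the left, and \eqref{eq:eq2} isolates $af_1$ as a single factor), so no difficulty arises. One might also note that this computation is the infinitesimal counterpart of the gauge mechanism behind Proposition~\ref{prop:1}: the same choice of $h$ that forces $\tilde a = 0$ is the one that produces this clean formula for $\tilde f_1'$.
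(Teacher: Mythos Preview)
Your proof is correct and follows essentially the same approach as the paper: differentiate $\tilde f_1 = hf_1$, substitute $h'=\tfrac12 ha$, and then apply \eqref{eq:eq2} to replace $af_1$ by $-2f_1'-f_1\theta$, after which the $hf_1'$ terms cancel. Your additional commentary on the noncommutative bookkeeping and the connection to Proposition~\ref{prop:1} is apt but not needed for the argument itself.
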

\begin{proof}
\[
{\tilde f_1}'=(h f_1)'=\frac12haf_1+hf_1'=\mbox{using equation \eqref{eq:eq2}}=-hf_1'-\frac12hf_1\theta+hf_1'=-\frac{\tilde f_1}{2}\theta.
\]
\end{proof}
Repeating the differentiation we see:
\begin{equation}
\label{eq:inter3}
{\tilde f_1}''=-\left (\frac{\tilde f_1}{2}\theta\right)'=\frac{\tilde f_1}{4}\theta-\frac{\tilde f_1}{2}\theta'.
\end{equation}
Finally, substituting these formulas in the first expression of \eqref{eq:inter2} we obtain the following result:
\begin{theorem}
If $h$ satisify the equation $h'=\frac12ha$ then the coordinate change $f_i\mapsto \tilde f_i=hf_i,\ i=1,2$ transforms the system \eqref{eq:sys1} in such a way that 
\[
\begin{aligned}
a&\mapsto 0\\
b&\mapsto \frac12\tilde f_1\left(\theta'-\frac12\theta\right){\tilde f_1}^{-1}
\end{aligned}
\]
where $\theta=\varphi''(\varphi')^{-1},\ \varphi=f_1^{-1}f_2$ and the equation $2{\tilde f_1}'+\tilde f_1\theta=0$ holds.
\end{theorem}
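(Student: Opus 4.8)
The plan is to read the theorem off the material already assembled in this section: the only genuinely new assertion is the formula for the transformed coefficient $b$, and the rest is a restatement of Proposition~\ref{prop:1} together with the corollary and the proposition that follow it.

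First I would dispose of the two easy claims. The statement $a\mapsto 0$ is exactly the Corollary stated just after Proposition~\ref{prop:1}: by that proposition the change $f_i\mapsto\tilde f_i=hf_i$ sends $a$ to $\tilde a=-2h'h^{-1}+hah^{-1}$, and substituting the hypothesis $h'=\frac12ha$ collapses this to $\tilde a=-hah^{-1}+hah^{-1}=0$. The relation $2\tilde f_1'+\tilde f_1\theta=0$ is, verbatim, the content of the Proposition asserting $\tilde f_1'=-\frac{\tilde f_1}{2}\theta$ under the same hypothesis, so there is nothing to prove there beyond quoting it.

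For $b$ I would argue as follows. Since $\tilde f_1,\tilde f_2$ are again two elements in general position, the construction at the beginning of the section produces unique coefficients $\tilde a,\tilde b$ for which both $\tilde f_i$ solve a system of the shape \eqref{eq:sys1}; Proposition~\ref{prop:1} identifies $\tilde a$, and the identity \eqref{eq:inter2} applies word for word to this tilded system, being merely the first of its equations solved for $b$. Thus $-\tilde b=\tilde a\,\tilde f_1'\tilde f_1^{-1}+\tilde f_1''\tilde f_1^{-1}$, and with $\tilde a=0$ this becomes $-\tilde b=\tilde f_1''\tilde f_1^{-1}$. Differentiating $\tilde f_1'=-\frac{\tilde f_1}{2}\theta$ once more with the noncommutative Leibniz rule gives \eqref{eq:inter3}; substituting that into $-\tilde b=\tilde f_1''\tilde f_1^{-1}$ and collecting the two resulting terms yields $\tilde b=\frac12\tilde f_1\left(\theta'-\frac12\theta\right)\tilde f_1^{-1}$, as claimed.

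I expect the one point that must be spelled out with care --- and essentially the only place a gap could hide --- is the justification that the transformed pair $(\tilde f_1,\tilde f_2)$ genuinely solves a second-order system with a single common pair of coefficients $(\tilde a,\tilde b)$, so that \eqref{eq:inter2} may legitimately be reused; this rests on the uniqueness of $(a,b)$ in terms of the two solutions, already established at the start of the section. Once that is in place, the remaining steps are a short, purely formal computation with the Leibniz rule, and no further noncommutative subtleties intervene, since all the relevant quantities have already been cast in the conjugated form $\tilde f_1(\,\cdot\,)\tilde f_1^{-1}$ by Proposition~\ref{prop:1}.
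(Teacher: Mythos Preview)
Your proposal is correct and follows essentially the same line as the paper's own proof: quote the Corollary for $\tilde a=0$ and the Proposition for $2\tilde f_1'+\tilde f_1\theta=0$, then read off $-\tilde b=\tilde f_1''\tilde f_1^{-1}$ from \eqref{eq:inter2} and plug in \eqref{eq:inter3}. Your explicit remark that one must first know $(\tilde f_1,\tilde f_2)$ again satisfies a system of the form \eqref{eq:sys1} (so that \eqref{eq:inter2} applies to the tilded data) is a useful clarification that the paper leaves implicit.
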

\begin{proof}
Since $a\mapsto 0$, we obtain from \eqref{eq:inter2}:
\[
b=-{\tilde f_1}''{\tilde f_1}^{-1}=\mbox{using \eqref{eq:inter3}}=-\left(\frac{\tilde f_1}{4}\theta-\frac{\tilde f_1}{2}\theta'\right){\tilde f_1}^{-1}
\]
\end{proof}
\begin{remark}\rm
Observe that in the commutative case the expression $\theta'-\frac12\theta$ coincides with the classical Schwarz differential of $\varphi$.
\end{remark}

%\medskip

\subsection{Generalized NC Schwarzian}
Let  $f$ and $g$ be two (invertible) elements of a division ring $\mathcal R$, equipped with a derivation $'$ (see previous section). We suppose that they satisfy so-called 
{\it left coefficients equations} $f''=F_1f$, $g''=F_2g$ for some $F_1,F_2\in\mathcal R$. 
We set $h:=fg^{-1}$ and $G:=F_1h-hF_2$.

%\medskip
\begin{theorem}\label{ncSch} 
If $G=0$ then we have the following relation:
\begin{equation}\label{NCSch}
h'''=(3/2)h''(h')^{-1}h'' - 2h'F_2
\end{equation}
(a {\it non-commutative analogue of the Schwarzian equation}.)
\end{theorem}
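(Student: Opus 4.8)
The plan is to reduce everything to the element $h=fg^{-1}$ together with the logarithmic derivative $v:=g'g^{-1}$, and to show that the hypothesis $G=0$ forces a \emph{first-order} relation between $h''$ and $h'$; differentiating that relation once and feeding in $g''=F_2g$ then produces the Schwarzian identity \eqref{NCSch}.

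First I would unwind the condition $G=0$. Since $f,g$ are invertible, the left-coefficient equations give $F_1=f''f^{-1}$ and $F_2=g''g^{-1}$, so $G=F_1h-hF_2=f''g^{-1}-fg^{-1}g''g^{-1}$; multiplying on the right by $g$, the hypothesis $G=0$ becomes simply $f''=h\,g''$. On the other hand, writing $f=hg$ and applying the noncommutative second-order Leibniz rule $(pq)''=p''q+2p'q'+pq''$ recalled above, one gets $f''=h''g+2h'g'+hg''$. Comparing the two expressions for $f''$ cancels the term $hg''$ and leaves $h''g=-2h'g'$, that is
\[
h''=-2h'g'g^{-1}=-2h'v .
\]

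Next I would differentiate. From $(g^{-1})'=-g^{-1}g'g^{-1}$ and $g''=F_2g$ one computes $v'=g''g^{-1}-g'g^{-1}g'g^{-1}=F_2-v^2$. Differentiating $h''=-2h'v$ then gives
\[
h'''=-2h''v-2h'v'=-2h''v-2h'F_2+2h'v^2 .
\]
Finally, using invertibility of $h'$, the relation $h''=-2h'v$ solves to $v=-\tfrac12 (h')^{-1}h''$, hence $h''v=-\tfrac12 h''(h')^{-1}h''$ and $h'v^2=(h'v)v=-\tfrac12 h''v=\tfrac14 h''(h')^{-1}h''$. Substituting and collecting terms yields
\[
h'''=h''(h')^{-1}h''-2h'F_2+\tfrac12 h''(h')^{-1}h''=\tfrac32\,h''(h')^{-1}h''-2h'F_2 ,
\]
which is exactly \eqref{NCSch}.

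There is no genuine obstacle here; the only points requiring care are the bookkeeping of factor orders at each noncommutative product and making explicit the (tacit) genericity assumptions that $h'$, $f$ and $g$ are invertible — these are needed to pass between the $F_i$ and the logarithmic derivatives and to isolate $v=-\tfrac12(h')^{-1}h''$. One could avoid introducing $v$ and argue directly with $f$ and $g$, but the substitution $v=g'g^{-1}$ is the step that makes all cancellations transparent, so that is the route I would take in the write-up.
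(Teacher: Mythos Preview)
Your proof is correct and follows essentially the same route as the paper: both arguments hinge on the identity $h''=G-2h'g'g^{-1}$ (which you obtain with $G=0$ already imposed, via the two expressions for $f''$), then differentiate once using $(g'g^{-1})'=F_2-(g'g^{-1})^2$, and finally eliminate $g'g^{-1}=-\tfrac12(h')^{-1}h''$. The only difference is cosmetic: the paper carries $G$ through the computation and obtains the more general formula
\[
h'''=\tfrac32 h''(h')^{-1}h'' - 2h'F_2-\tfrac32 h''(h')^{-1}G +\tfrac12 G(h')^{-1}(G-h''),
\]
specializing to $G=0$ only at the end, whereas you set $G=0$ from the outset; this makes your write-up a bit cleaner for the stated theorem, while the paper's version yields a little extra information for free.
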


\begin{proof}
$$
h'=f'g^{-1}-hg'g^{-1},
$$
$$
h''=G-2h'g'g^{-1},
$$
$$
h'''=G'-2h''g'g^{-1}-2h'F_2+2h'(g'g^{-1})^2\ .
$$

One can express $g'g^{-1}=(1/2)(h')^{-1}(G-h'')$ and get
$$
h'''=(3/2)h''(h')^{-1}h'' - 2h'F_2-(3/2)h''(h')^{-1}G +(1/2)G(h')^{-1}(G-h'')\ .
$$

%\bigskip
Let $f^{-1}f''= g^{-1} g''$ , i.e. $f, g$ are solutions of the same differential equation
with {\it  right coefficients}. Let $g'' = F g,$ i.e. $g$ is also a solution of a differential
equation with a left coefficient. Let $h = f g^{-1}$ . 
Then
$$
h'''-(3/2)h'' (h')^{-1} h'' = -2h'F .
$$
Note that that the left-hand side is stable under M\"{o}bius transform
$$
h \to  (ah + b)(ch + d)^{-1}
$$
where $a'=b'= c'= d'=0.$
\end{proof}

\begin{remark}
Consider the commutative analogue of (\ref{NCSch})
\begin{equation}\label{Sch}
(h')^{-1}h'''=(3/2)(h')^{-2}h''^2- 2F_2.
\end{equation}
This equality can be regarded as yet another definition of the Schwarzian ${\rm Sch}(h)$
$$
{\rm Sch}(h):= (h')^{-1}h'''-(3/2)(h')^{-2}h''^2 = - 2F_2.
$$
Hence, we obtain one more justification for calling {\it a NC Schwarzian of $h$} the following expression
\begin{equation}\label{NCSch1}
{\rm NCSch}(h):=(h')^{-1}h''' - (3/2)(h')^{-1}h''(h')^{-1}h'' 
\end{equation}
\end{remark}

%\bigskip

\begin{remark}
In commutative case there exist the following famous version of KdV equation
\begin{equation}\label{SchKdV}
h_t = (h'){\rm Sch}(h)
\end{equation}
It is invariant under  the projective action of $SL_2$ and, when written as an evolution on the invariant $\rm{Sch}(h)$ it
becomes the "usual" KdV
\begin{equation}\label{KdV}
{\rm Sch}(h)_t = {\rm Sch}(h)^{'''} + 3{\rm Sch}(h)' {\rm Sch}(h).
\end{equation}
\end{remark}
Introducing two commuting derivatives  $\partial_x = '$ and $\partial_t$ of our skew-field $\mathcal R$ with respect to two distinguished elements $x$ and $t$ one can write the analogs of (\ref{SchKdV}):
\begin{equation}\label{NSchKdV}
h_t = (h'){\rm NSch}(h) = h''' - (3/2)h''(h')^{-1}h'' 
\end{equation}

%\bigskip

\begin{remark}
The equation (\ref{NSchKdV}) has an interesting  geometric interpretation (specialisation) as the {\it Spinor Schwarzian-KdV equation} (see the equation (4.6) in \cite{MB}).

%\bigskip
%(\tcb{should they necessarily commute ?}) 

%Does it $PSL_2-$invariant?

%What is the NC analogue of (\ref{KdV})?

%How it relates to the NCKdV of Etingof-Gelfand-Retakh?

%Does exist a NC analogue of Miura leading to a NC mKdV? Such NC mKdV equation should have the geometric interpretation of the Theorem 4.5 in (\cite{MB}) as the equation (4.14).
\end{remark}

\section{Some applications of NC cross-ratios.}
Let us briefly describe few possible applications of noncommutative cross-ratios, inspired by the classical constructions.

\subsection{Noncommutative leapfrog map}

%\medskip 
Let $\mathbb P^1$ be the projective line over a noncommutative division ring $\mathcal R.$
Consider points five points $S_{i-1} , S_i , S_{i+1} , S_i^{-}$ and $S_i^{+}$ on $\mathbb P^1$. 
The theory of noncommutative cross-ratios (see theorem 3.4) implies that there exists a projective transformation sending
$$
\left(S_{i-1}, S_i, S_{i+1}, S_i^{-}\right) \to \left(S_{i+1} , S_i , S_{i-1} , S_i^{+}\right)
$$
(in this order!) if and only if the corresponding cross-ratios coincide:
$$
\begin{aligned}
\bigl(S_{i+1}&-S_i \bigr)^{-1}\left(S_i^{-} - S_i\right)\left(S_i^{-} - S_{i-1}\right)^{-1}\left(S_{i+1} - S_{i-1}\right)=\\
%$$
%$$
&= \lambda^{-1}\left(S_{i-1} - S_i\right)^{-1}\left(S_i^{+}-S_i\right)\left(S_i^{+} - S_{i+1}\right)^{-1}\left(S_{i-1} - S_{i+1}\right)\lambda
\end{aligned}
$$
where $\lambda \in R.$

Note that the factor $\left(S_{i+1}-S_{i-1}\right)$ appears in both sides of the equation
but with the different signs. It shows that in the commutative case one gets
the identity (5.14) from \cite{GSTV}; this map is integrable and constitutes a part of the pentagramm family of maps, see the next paragraph.
\begin{problem} 
It is very intriguing if the same properties exist in noncommutative case.
\end{problem}

%\medskip
\subsection{Noncommutative cross-ratios and the pentagramma mirificum}

\subsubsection{Classical 5-recurrence} 

There is a wonderful observation (known as the {\it Gauss Pentagramma mirificum}) that when a pentagramma is drawn on a unit sphere in $\mathbb R^3$ (see figure 2, where we do not observe the orthogonality of great circles) with successively orthogonal great circles with the lengths of inner side arcs $\alpha_i,\quad i=1,\ldots,5$ and one takes $y_i := \tan^2 (\alpha_i),$ then the following recurrence relation satisfies:
\begin{figure}[tb]
\includegraphics[scale =.65]{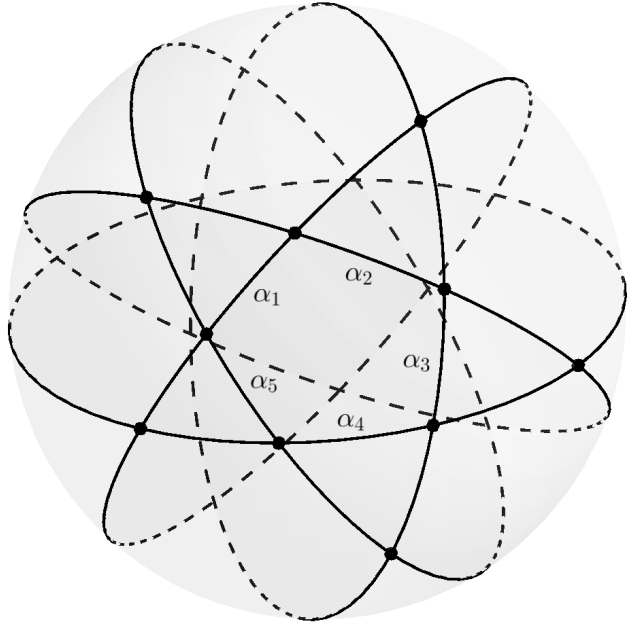}
\caption{Gauss' pentagramma mirificum}
\end{figure}

\begin{equation}\label{pentmirGauss}
y_i y_{i+1} = 1 + y_{i+3}, \quad {\rm mod} \quad \mathbb Z_5.
\end{equation}
Gauss has observed that the first three equations for  $i=1,2,3$ in \ref{pentmirGauss} completely define the last two equations for $i=4,5$.

It was discussed in  \cite{M-G} (which is our main source of the classical data for the Gauss Pentagramma Mirificum) that the variables $y_i$ can be expressed via the
classical cross-ratios:
$$y_i = [p_{i+1}, p_{i+2}, p_{i+3},p_{i+4}] = \frac{(p_{i+4}-p_{i+1})(p_{i+3}-p_{i+2})}{(p_{i+4}-p_{i+3})(p_{i+2}-p_{i+1})},$$
where $p_i = p_{i+5}$ are five points on real or complex projective line.
\begin{proposition}
Suppose that two consecutive points $y_i$ and $y_{i+1}$ (cyclically) are differents. Then the five cross-ratios $y_i$ satisfy the relation \eqref{pentmirGauss}.
\end{proposition}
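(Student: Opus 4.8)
The plan is to reduce the five-term recurrence \eqref{pentmirGauss} to the cocycle and symmetry identities already established for the cross-ratio in Sections 2--3, applied in the commutative case. First I would fix the cyclic labelling $p_i=p_{i+5}$ on the projective line and write out the definition $y_i=[p_{i+1},p_{i+2},p_{i+3},p_{i+4}]$ explicitly as the ratio displayed just above the statement. The target relation $y_iy_{i+1}=1+y_{i+3}$ involves the three cross-ratios built from $p_{i+1},\dots$ and $p_{i+2},\dots$ and $p_{i+4},\dots$; note that $y_{i+3}=[p_{i+4},p_{i},p_{i+1},p_{i+2}]$ after reducing indices mod $5$, so all three cross-ratios involve the same four points $p_{i+1},p_{i+2},p_{i+4},p_i$ (with $p_{i+3}$ entering $y_i$ and $y_{i+1}$ but dropping out, which is exactly the phenomenon flagged in the leapfrog discussion that the factor $(S_{i+1}-S_{i-1})$ cancels). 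So the heart of the matter is a relation among permuted cross-ratios of a single $4$-tuple.

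The key steps, in order, are: (1) express $y_i$, $y_{i+1}$ and $y_{i+3}$ in terms of cross-ratios of the common four points $\{p_i,p_{i+1},p_{i+2},p_{i+4}\}$ using the permutation formulas (Proposition 3.7) and the identity $\kappa(x,y,z,t)=1-\kappa(t,y,z,x)$ from Corollary 3.5; (2) use the multiplicativity cocycle identity $\kappa(x,y,z,t)=\kappa(w,y,z,t)\kappa(x,w,z,t)$ to split $y_iy_{i+1}$ as a single cross-ratio of that $4$-tuple; (3) match the resulting cross-ratio against $1+y_{i+3}$ via the $1-\kappa$ relation. Concretely, I expect $y_i$ and $y_{i+1}$ to be of the form $\kappa(\cdot,p_{i+3},\cdot,\cdot)$ and $\kappa(\cdot,\cdot,p_{i+3},\cdot)^{\pm1}$ whose product telescopes in the $p_{i+3}$ slot by the cocycle condition; then the hypothesis that consecutive $y_i,y_{i+1}$ are distinct guarantees the relevant denominators (equivalently, the cross-ratios avoiding the degenerate values $0,1$) are nonzero, so the algebraic manipulations are legitimate. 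Since everything is commutative here, no conjugation factors survive, which is why the clean scalar identity \eqref{pentmirGauss} comes out.

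The main obstacle will be the bookkeeping in step (1): getting the permutations of $p_i,p_{i+1},p_{i+2},p_{i+3},p_{i+4}$ lined up so that $y_i$, $y_{i+1}$ and $1+y_{i+3}$ are visibly cross-ratios of the \emph{same} ordered quadruple up to the known symmetry relations, and checking that the slot occupied by the vanishing point $p_{i+3}$ is precisely the one in which the cocycle identity merges the first two factors. Once the indices are arranged correctly the rest is a one-line application of Corollaries 3.5--3.6. An alternative, more computational route is to substitute the explicit rational expressions for $y_i,y_{i+1},y_{i+3}$ into $y_iy_{i+1}-1-y_{i+3}$, clear denominators, and verify the numerator vanishes identically using the Plücker relation $p_{ij}p_{k\ell}-p_{ik}p_{j\ell}+p_{i\ell}p_{jk}=0$; this avoids any permutation subtleties but hides the conceptual reason, so I would present the cocycle argument as the main proof and mention the Plücker-identity verification as a remark. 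The non-degeneracy hypothesis on consecutive $y_i$ is used exactly to ensure the points are in sufficiently general position for the cross-ratios (and hence the recurrence) to be well-defined.
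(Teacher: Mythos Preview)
The paper does not actually supply a proof of this proposition: it is stated as a known classical fact, with the reference \cite{M-G} given as the source for ``the classical data for the Gauss Pentagramma Mirificum''. So there is no proof in the paper to compare your proposal against.

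That said, your strategy is sound and would go through. The cancellation of $p_{i+3}$ from the product $y_iy_{i+1}$ that you anticipate is exactly what happens: with the explicit formula $y_i=\dfrac{(p_{i+4}-p_{i+1})(p_{i+3}-p_{i+2})}{(p_{i+4}-p_{i+3})(p_{i+2}-p_{i+1})}$ one sees that the factors $(p_{i+4}-p_{i+3})$ and $(p_{i+3}-p_{i+2})$ cancel between $y_i$ and $y_{i+1}$, leaving $y_iy_{i+1}=\dfrac{(p_{i+4}-p_{i+1})(p_{i}-p_{i+2})}{(p_{i+2}-p_{i+1})(p_{i}-p_{i+4})}$. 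The remaining identity $1+y_{i+3}=y_iy_{i+1}$ is then the one-line numerator expansion $(p_{i+2}-p_{i+1})(p_i-p_{i+4})+(p_{i+2}-p_{i+4})(p_{i+1}-p_i)=(p_{i+4}-p_{i+1})(p_i-p_{i+2})$, which is the Pl\"ucker relation you mention. Your more conceptual route through the cocycle identity of Corollary~3.5 also works, but be aware that $p_{i+3}$ sits in the \emph{third} slot of $y_i$ and the \emph{second} slot of $y_{i+1}$ under the bracket convention used here, whereas the cocycle identity of Corollary~3.5 merges along the \emph{first} two slots; so you genuinely need the permutation relations of Proposition~3.7 before the cocycle applies, exactly as you flag in your ``main obstacle'' paragraph. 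Given this, the computational route you relegate to a remark is in fact the cleaner presentation.
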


It was remarked in  \cite{M-G} that after renaming $x_1=y_1,\,x_2=y_4,\,x_3=y_2,\,x_4=y_5,\,x_5=y_3,$  the variables $x_i,\ i=1,\ldots,5$ satisfy the famous \textit{pentagon recurrence}:
$$x_{i-1}x_{i+1}= 1+x_{i}.$$
It is also known, that this construction is closely related to cluster algebras, see \cite{FR} for further details.

\subsubsection{Non-commutative analogues} 

Let $\mathcal R$ be an associative division ring. In [6] (see sections 2 and 3) we defined the cross-ratio $\kappa(i, j, k, l)$ four vectors $i, j, k, l \in \mathcal R^2.$ Recall that
$$
\kappa(i, j, k, l)= q^j_{kl}q^i_{lk}
$$
where $q^k_{ij}$
is the corresponding quasi-Pl\"{u}cker coordinate. In particular, $q^k_{ij}$ 
and $q^k_{ji}$ are inverse to each other and
$$
\kappa(j, i, l, k) = q^i_{lk}\kappa(i, j, k,l)q^i_{kl},\ \kappa(k,l, i, j) = q^j_{ik}\kappa(i, j, k, l)q^j_{ki}.
$$
We set $\overline{\kappa(i, j, k,l)} = \kappa(j, i, k, l)$.% and $\overleftarrow{\kappa(i, j,k,l)} = \kappa(k,l,i,j).$

Let now $i, j, k, l, m$ be five vectors in $\mathcal R^2.$ We start with multiplicative relations for their cross-ratios. All these relations are redundant in the commutative case. 
$$
\begin{aligned}
\kappa(i, j, k, l)q^{i}_{km} \kappa(i, k, m, l) q^{i}_{mk}& = q^{j}_{kl}\overline{\kappa(i,k,m,l)}\, \overline{\kappa(i, j,k, l)}q^{j}_{lk},\\
%$$
%$$
q^{l}_{ mk }\kappa(i, j, k, l)q^{l}_{ ki} \kappa(l, k, i, m)q^{l} _{im}& = \overline{\kappa(l, k, i, m)}q^{k}_{ ml} \overline{\kappa(i, j, k, l)}q^{k}_{lm }.
\end{aligned}
$$

Noncommutative versions of the {\it pentagramma mirificum} relations can be written as follows: 
$$\kappa(i, j, k, l)q^{i}_{kj}\kappa(m, l, j, i)q^{i}_{jk}  =  1 - \kappa(m, j, k, i) , $$
$$\kappa(i, j, k, l)q^{l}_{ki} \kappa(l, k, i, m)q^{i}_{jk} = 1 - \kappa(l, j, k, m) , $$
$$q^{l}_{ jk} \kappa(i, j, k, l)q^{l}_{kj}\kappa(m, l, j, i) = 1 - \kappa(i, k, j, m) . $$

For five vectors $1, 2, 3, 4, 5$ in $\mathcal R^2$ we  set
$$x_1 = -\kappa(1, 2, 3, 4),\ x_2 = -\kappa(5, 2, 3, 1),\ x_3 = -\kappa(5, 4, 2, 1),
$$
$$x_4 = -\kappa(3, 4, 2, 5),\ x_5 = -\kappa(3, 1, 4, 5) .$$

Then 
$$x_1 q^1_{32}x_3 q ^1_{23} =  1 + x_2, \quad  x_4 q^5_{23}x_2 q ^5_{32} =  1 + x_3, $$
$$x_3 q^5_{24}x_5 q ^5_{42} =  1 + x_4, \quad  x_6 q^3_{42}x_4 q ^3_{24} =  1 + x_5, $$
$$x_5 q^3_{41}x_7 q ^3_{14} =  1 + x_6,$$
where $x_6 := \bar x_1$ and $x_7 := \bar x_2 .$
Note the different order for even and odd left hand sides. So , we have an {\it 5-antiperiodicity}, i.e. the periodicity
up to the anti-involution $x_{k+5} = \bar x_k$

Also, the relations with odd left hand parts imply the relations for even left hand parts as in the commutative case.

\begin{remark}
There is an important "continuous limit" of "higher pentagramma" maps on polygons in $\mathbb P^n$  which is the Boussinesq (or generalized $(2, n+ 1)-$KdV hierarchy) equation (\cite{KhS}).
\end{remark}

\begin{problem}
What is a non-commutative "higher analogue" of pentagramma recurrences? Is there a related  non-commutative integrable analogue of the Boussinesq equation?
\end{problem}

We hope to return to these questions in our future paper devoted to new examples of NC integrable systems (\cite{RRS}).

\bigskip\noindent
Vladimir Retakh\\
Department of Mathematics\\
Rutgers University\\
Piscataway, New Jersey 08854, USA\\
e-mail: vretakh@math.rutgers.edu

\medskip\noindent
Vladimir Roubtsov\\
Maths Department, University of Angers\\
Building I\\
Lavoisier Boulevard\\
Angers, 49045, CEDEX 01, France\\
e-mail: volodya@univ-angers.fr

\medskip\noindent
Georgy Sharygin\\
Department of Mathematics and Mechanics\\
Moscow State (Lomonosov) University\\
Leninskie Gory, d. 1\\
Moscow 119991, Russia\\
e-mail: sharygin@itep.ru

\end{document}